\def\NZQ{\mathbb}               
\def\NN{{\NZQ N}}
\def\QQ{{\NZQ Q}}
\def\ZZ{{\NZQ Z}}
\def\AA{{\NZQ A}}
\newtheorem{Theorem}{Theorem}[section]
\newtheorem{Lemma}[Theorem]{Lemma}
\newtheorem{Proposition}[Theorem]{Proposition}
\newtheorem{Remark}[Theorem]{Remark}
\newtheorem{Definition}[Theorem]{Definition}
\let\epsilon\varepsilon
\let\phi=\varphi
\let\kappa=\varkappa
\def \s {\sigma}
\def \d {\delta}
\begin{document}

\title{A simpler proof of toroidalization of morphisms from 3-folds to surfaces}
\author{Steven Dale Cutkosky}
\thanks{Partially supported by NSF}

\address{Steven Dale Cutkosky, Department of Mathematics,
University of Missouri, Columbia, MO 65211, USA}
\email{cutkoskys@missouri.edu}

\begin{abstract} We give a simpler and more conceptual proof of toroidalization
of morphisms of 3-folds to surfaces, over an algebraically closed field of
characteristic zero. A toroidalization is obtained by performing sequences
of blow ups of nonsingular subvarieties above the domain and range, to make
a morphism toroidal. The original proof of toroidalization of morphisms of
3-folds to surfaces, which appeared in Springer Lecture Notes in Math.
 in 2002 \cite{C3},
is much more complicated. 
\end{abstract}

\maketitle

\section{Introduction}
 Let $\mathfrak k$ be  an algebraically closed field of characteristic zero.
 If $X$ is a nonsingular variety, then the choice
of a simple normal crossings  divisor (SNC divisor) on $X$ makes $X$ into a toroidal variety.

Suppose that $\Phi:X\rightarrow Y$ is a dominant morphism  of nonsingular $\mathfrak k$-varieties, and there is a SNC divisor $D_Y$ on $Y$ such that $D_X=\Phi^{-1}(D_Y)$ is a SNC divisor on $X$. Then $\Phi$ is torodial (with respect to $D_Y$ and $D_X$) if and only if $\Phi^*(\Omega_Y^1(\mbox{log }D_Y))$ is a subbundle of $\Omega_X^1(\mbox{ log }D_X)$ (Lemma 1.5 \cite{C3}). A toroidal morphism can be expressed locally 
by monomials. All of the cases are written down for toroidal morphisms from
a 3-fold to a surface in Lemma 19.3 \cite{C3}.

The toroidalization problem is to determine, given a dominant morphism
$f:X\rightarrow Y$ of  $\mathfrak k$-varieties, if there exists a commutative diagram
$$
\begin{array}{rcl}
X_1&\stackrel{f_1}{\rightarrow}& Y_1\\
\Phi\downarrow&&\downarrow \Psi\\
X&\stackrel{f}{\rightarrow}&Y
\end{array}
$$
such that $\Phi$ and $\Psi$ are products of blow ups of nonsingular subvarieties, $X_1$ and $Y_1$ are nonsingular, and there exist SNC divisors $D_{Y_1}$ on $Y_1$ and $D_{X_1}=f^{*}(D_{Y_1})$ on $X_1$ such that $f_1$ is toroidal (with respect to $D_{X_1}$ and $D_{Y_1}$). 

The toroidalization problem does not have a positive answer in positive characteristic $p$, even for maps of curves; $t=x^p+x^{p+1}$ gives a simple example. 

In characteristic zero, the toroidalization problem has an affirmative answer
if $Y$ is a curve and $X$ has arbitrary dimension; this is really embedded resolution of hypersurface singularities, so follows from resolution of singularities 
\cite{H1} (some  of the simplified proofs are  \cite{BEV}, \cite{BrM} \cite{C6}, \cite{EHa} and  \cite{Hau1}). 
Toroidalization is proven for morphisms from a 3-fold to a surface in \cite{C3} and for the case of a 3-fold to a 3-fold in \cite{C4}.
Detailed  history and  references on the toroidalization problem are given in the introductions to \cite{C3} and \cite{C4}.

We consider the problem of toroidalization as a resolution of singularities type problem.
When the dimension of the base is larger than one,  the problem shares many of
the complexities of resolution of vector fields (\cite{Seid}, \cite{Ca}, \cite{Pan}) and of
resolution of singularities in positive characteristic (some references are \cite{Ab1}, \cite{Ab}, \cite{H2}, \cite{Cos1}, \cite{CP1}, \cite{CP2}, \cite{BV}, \cite{C5}, \cite{DJ}, \cite{Hau}, \cite{Hau2}, \cite{Hau3},  \cite{H9},  \cite{KK}, \cite{T}). In particular, 
natural invariants  do not have a ``hypersurface of maximal contact'' and
are sometimes  not upper semicontinuous.

Toroidalization, locally along a fixed valuation, is proven in all dimensions
and relative dimensions in \cite{C1} and \cite{C2}. 

The proof of toroidalization of a dominant morphism  from a 3-fold to a surface given in  \cite{C3} consists of 2  steps.

The first step is to prove ``strong preparation''. 
Suppose that $X$ is a nonsingular variety, $S$ is a nonsingular surface with
a SNC divisor $D_S$, and $f:X\rightarrow S$ is a dominant morphism
such that $D_X=f^{-1}(D_S)$ is a SNC divisor on $X$ which contains the locus
where $f$ is not smooth. $f$ is strongly prepared if 
$f^*(\Omega_S^2(\mbox{log }D_S))=\mathcal I\mathcal M$ where $\mathcal I\subset \mathcal O_X$ is an ideal sheaf, and $\mathcal M$ is a subbundle of $\Omega_X^2(\mbox{ log }D_X)$ (Lemma 1.7 \cite{C3}). A strongly prepared morphism has nice local forms which are close to being toroidal (page 7 of \cite{C3}).

Strong preparation is the construction of a commutative diagram
$$
\begin{array}{cll}
X_1&&\\
\downarrow&\searrow\\
X&\stackrel{f}{\rightarrow}&S
\end{array}
$$
where $S$ is a nonsingular surface with a SNC divisor $D_S$ such that
$D_X=f^{*}(D_S)$ is a SNC divisor on the nonsingular variety $X$ which contains the locus where $f$ is not smooth, the vertical arrow is a product of blow ups
of nonsingular subvarieties so that $X_1\rightarrow S$ is strongly prepared.
Strong preparation of morphisms from 3-folds to surfaces is proven in Theorem 17.3 of \cite{C3}.

The second step is to prove that  a strongly prepared morphism from a 3-fold to a surface can be toroidalized. This is proven in Sections 18 and 19 of \cite{C3}.

This second step is generalized in \cite{CK} to prove that a strongly prepared morphism from
an $n$-fold to a surface can be toroidalized. Thus to prove toroidalization of
a morphism from an $n$-fold to a surface, it suffices to proof strong preparation. 

The proof of strong preparation in \cite{C3} is extremely complicated, and does
not readily generalize to higher dimensions. The proof of this result occupies 170 pages of \cite{C3}. We mention that that the main invariant considered in this paper, $\nu$, can be interpreted as the adopted order of 
Section 1.2 of \cite{Ca} of the 2-form $du\wedge dv$. 

In this paper, we give a significantly simpler and more conceptual proof of strong preparation of morphisms of 3-folds to surfaces.
It is our hope that this proof can be extended to prove strong preparation 
for morphisms of $n$-folds to surfaces, for $n>3$. The proof is built around a new 
upper semicontinuous invariant
$\sigma_D$, whose value is a natural number or $\infty$. if $\sigma_D(p)=0$ 
for all $p\in X$, then $X\rightarrow S$ is prepared (which is slightly stronger than being strongly prepared). A first step towards obtaining a reduction in $\sigma_D$ is to 
make $X$ 3-prepared, which is achieved  in Section \ref{Section3}. This is a nicer local form, which is proved by making a local reduction to lower dimension.
The proof proceeds  by performing a toroidal morphism above
$X$ to obtain that $X$ is 3-prepared at all points except for a finite number of 1-points. Then general curves through these points lying on $D_X$ are blown up
to achieve 3-preparation everywhere on $X$. if $X$ is 3-prepared at a point $p$,
then there exists an \'etale cover $U_p$ of an affine neighborhood of $p$ and a local toroidal structure $\overline D_p$ at $p$ (which contains $D_X$) such that there exists 
a projective toroidal morphism $\Psi:U'\rightarrow U_p$ such that $\sigma_D$
has dropped everywhere above $p$ (Section \ref{Section4}).  The final step of the proof is to make these
local constructions  algebraic, and  to patch them. This is accomplished in 
Section \ref{Section5}. In Section \ref{Section6} we state and prove strong preparation for morphisms of 3-folds to surfaces (Theorem \ref{TheoremA}) and
toroidalization of morphisms from 3-folds to surfaces (Theorem \ref{TheoremB}).
Important definitions along the way are: 

prepared, Definition \ref{Prep},

1-prepared, Definition \ref{1-prepdef},

2-prepared, after the proof of Proposition \ref{Step2},

3-prepared, Definition \ref{3-prep}.

The author thanks the referee for  their  helpful suggestions for improving the readability of the article.

\section{The invariant $\sigma_D$, 1-preparation and 2-preparation.}\label{Section2}

For the duration of the paper, $\mathfrak k$ will be an algebraically closed field of characteristic zero. A $\mathfrak k$-variety is an integral quasi projective  $\mathfrak k$-scheme.
We will write  curve (over $\mathfrak k$) to mean a 1-dimensional $\mathfrak k$-variety, and similarly for surfaces and 3-folds. We will assume that varieties are
quasi-projective. This is not really a restriction, by the fact that after a sequence of blow ups of nonsingular subvarieties, all varieties satisfy this condition. By a general point of a $\mathfrak k$-variety $Z$,
we will mean a member of a nontrivial  open subset of $Z$ on which some specified good condition holds. When we say that ``$p$ is a point of $X$'' or ``$p\in X$'' we will mean that $p$ is a closed point, unless we indicate otherwise (for instance, by saying that ``$p$ is a generic point of a subvariety $Y$ of $X$'').

A reduced divisor $D$ on a nonsingular variety $Z$ of dimension $n$ is a simple normal crossings divisor (SNC divisor) if all irreducible components of $D$ are nonsingular, and if $p\in Z$, then there exists a regular system of parameters 
$x_1,\ldots, x_n$ in $\mathcal O_{Z,p}$ such that $x_1x_2\cdots x_r=0$ is a local equation of $D$ at $p$, where $r\le n$ is the number of irreducible components of $D$ containing $p$. Two nonsingular subvarieties $X$ and $Y$ intersect transversally at $p\in X\cap Y$ if there exists a regular system of parameters 
$x_1,\ldots,x_n$ in $\mathcal O_{Z,p}$ and subsets $I,J\subset \{1,\ldots,n\}$
such that $\mathcal I_{X,p}= (x_i\mid i\in I)$ and $\mathcal I_{Y,p}= (x_j\mid j\in J)$.

\begin{Definition}\label{1-prepdef} Let $S$ be a nonsingular surface over $\mathfrak k$ with a reduced SNC divisor $D_S$. Suppose that $X$ is a nonsingular 3-fold, and $f:X\rightarrow S$ is a dominant morphism.
$X$ is 1-prepared (with respect to $f$) if $D_X=f^{-1}(D_S)_{red}$ is a SNC divisor on $X$ which contains the locus where $f$ is not smooth, and if  $C_1$, $C_2$ are the two components of $D_S$ whose intersection is nonempty, $T_1$ is a component of $X$ dominating $C_1$ and  $T_2$ is a component of $D_X$ which dominates $C_2$, then $T_1$ and $T_2$ are disjoint.
\end{Definition}

The following lemma is an easy consequence of the main theorem on resolution of singularities.

\begin{Lemma}\label{1-prep} Suppose that $g:Y\rightarrow T$ is a dominant morphism of a 3-fold over $\mathfrak k$ to a surface over $\mathfrak k$
and $D_T$ is a 1-cycle on $T$ such that $g^{-1}(D_R)$ contains the locus where $g$ is not smooth.
Then there exists a commutative diagram of morphisms
$$
\begin{array}{rcl}
Y_1&\stackrel{g_1}{\rightarrow}& T_1\\
\pi_1\downarrow&&\downarrow\pi_2\\
Y&\stackrel{g}{\rightarrow}& T
\end{array}
$$
such that the vertical arrows are products of blow ups of nonsingular subvarieties contained in the preimage of $D_T$, 
$Y_1$ and $T_1$ are nonsingular and  $D_{T_1}=\pi_1^{-1}(D_T)$ is a SNC divisor  on $T_1$ such that $Y_1$ is 1-prepared with respect to $g_1$.
\end{Lemma}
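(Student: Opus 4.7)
The plan is to build the diagram in three stages: first resolve $(T, D_T)$; then make the total transform of $D_T$ on $Y$ into an SNC divisor; finally perform a ``separation'' step on $Y$ that achieves the disjointness condition from Definition \ref{1-prepdef}.

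Stage 1. Apply embedded resolution of singularities to $(T, D_T)$. This furnishes $\pi_2 : T_1 \to T$, a composition of blow-ups of nonsingular centers lying in the preimage of $D_T$, with $T_1$ nonsingular and $D_{T_1} := \pi_2^{-1}(D_T)_{red}$ an SNC divisor on $T_1$.

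Stage 2. The induced rational map $Y \dashrightarrow T_1$ has base locus contained in $g^{-1}(D_T)$, so Hironaka's theorem on resolution of indeterminacies yields a sequence of blow-ups of nonsingular centers inside the preimage of $D_T$ producing a nonsingular $Y'$ with an honest morphism $g' : Y' \to T_1$. Now apply embedded resolution / principalization to $(g')^{-1}(D_{T_1})$ on $Y'$, again by smooth blow-ups inside the preimage of $D_T$, to obtain $Y''$ on which $D_{Y''} := (g')^{-1}(D_{T_1})_{red}$ is SNC. Because each blow-up was supported in the preimage of $D_T$ and the original $g$ was smooth off $g^{-1}(D_T)$, the non-smooth locus of $Y'' \to T_1$ is still contained in $D_{Y''}$.

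Stage 3 (separation). The resulting $Y'' \to T_1$ satisfies everything required of a $1$-prepared morphism except possibly the disjointness condition. Suppose $C_i, C_j$ are two components of $D_{T_1}$ with $C_i \cap C_j \neq \emptyset$, and $T_\alpha, T_\beta$ are components of $D_{Y''}$ with $T_\alpha$ dominating $C_i$, $T_\beta$ dominating $C_j$, and $T_\alpha \cap T_\beta \neq \emptyset$. Since $D_{Y''}$ is SNC, $T_\alpha \cap T_\beta$ is a smooth codimension-$2$ subscheme, i.e.\ a disjoint union of smooth curves, contained in $g^{-1}(D_T)$ and mapping into the point $C_i \cap C_j$. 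Blow up $T_\alpha \cap T_\beta$: the strict transforms of $T_\alpha, T_\beta$ become disjoint, the SNC property is preserved (the center is the intersection of two components of an SNC divisor, so has normal crossings with the rest), and the new exceptional divisor fibers over the curve $T_\alpha \cap T_\beta$ and hence contracts to the point $C_i \cap C_j$, so does \emph{not} dominate any component of $D_{T_1}$. Consequently this blow-up removes the chosen bad pair without creating any new one. Iterate over the finitely many bad pairs.

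The only delicate point is termination of Stage 3. Two facts make it work: strict transforms of irreducible components remain irreducible and cannot acquire intersections they did not have before, and every exceptional divisor introduced in Stage 3 contracts to a point of $D_{T_1}$, hence can never play the role of a component dominating some $C_i$. Thus the number of bad pairs is a nonnegative integer that strictly decreases with each blow-up, and the algorithm halts, yielding the desired $Y_1$, $g_1$, and $\pi_1$.
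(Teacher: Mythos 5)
Your proof is correct, and it follows the route the paper intends: the paper offers no argument beyond calling the lemma ``an easy consequence of the main theorem on resolution of singularities,'' and your Stages 1--2 are exactly that citation made explicit. The only substantive addition is Stage 3, and you handle it correctly: since $T_\alpha$ and $T_\beta$ meet transversally along smooth curves lying over the finitely many points of $C_i\cap C_j$, one blow-up of $T_\alpha\cap T_\beta$ (a permissible center, so SNC is preserved) separates their strict transforms, while the new exceptional divisor maps to points of $D_{T_1}$ and so can never dominate a component of $D_{T_1}$, which gives the strict decrease in the number of bad pairs needed for termination of the separation step required by Definition \ref{1-prepdef}.
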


For the duration of this paper,  $S$ will be a fixed nonsingular surface over $\mathfrak k$, with a (reduced) SNC divisor $D_S$.
To simplify notation, we will often write $D$ to denote $D_X$, if $f:X\rightarrow S$ is 1-prepared.

Suppose that $X$ is 1-prepared with respect to $f:X\rightarrow S$. 
A nonsingular curve $C$ of $X$  which is contained in $D_X$ makes SNCs with $D_X$ if either $C$ is a 2-curve, or  $C$ contains no 3-points, and if $q\in C$ is a 2-point, then there are regular parameters $x,y,z$ in the local ring $\mathcal O_{X,q}$ such that $xy=0$ is a local equation of $D_X$ at $q$, and $x=z=0$ are local equations of $C$ at $q$.

A permissible blow up of $X$
is the blow up $\pi_1:X_1\rightarrow X$ of a point of $D_X$ or a nonsingular curve contained in $D_X$ which
makes SNCs with $D_X$. Then $D_{X_1}=\pi_1^{-1}(D_X)_{red}=(f\circ\pi_1)^{-1}(X_S)_{red}$ is a SNC divisor on $X_1$ and $X_1$  is
1-prepared with respect to $f\circ\pi_1$.  A permissible curve is a curve  which satisfies these conditions (so its blow up is permissible).

Assume that $X$ is 1-prepared with respect to $D$. We will say that $p\in X$ is a $n$-point (for $D$) if $p$ is on exactly $n$ components of $D$. Suppose $q\in D_S$ and $u,v$ are regular parameters in $\mathcal O_{S,q}$ 
such that either $u=0$ is a local equation of $D_S$ at $q$ or $uv=0$ is a local equation of $D_S$ at $q$.
$u,v$ are called permissible parameters at $q$.

For $p\in f^{-1}(q)$, we have regular parameters $x,y,z$ in $\hat{\mathcal O}_{X,p}$ such that
\begin{enumerate}
\item[1)]
If $p$ is a 1-point,
\begin{equation}\label{1-point}
u=x^a, v=P(x)+x^bF
\end{equation}
where $x=0$ is a local equation of $D$, $x\not\,\mid F$ and $x^bF$ has no terms which are a power of $x$.
\item[2)] If $p$ is a 2-point, after possibly interchanging $u$ and $v$,
\begin{equation}\label{2-point}
u=(x^ay^b)^l, v=P(x^ay^b)+x^cy^dF
\end{equation}
where $xy=0$ is a local equation of $D$, $a,b>0$, $\mbox{gcd}(a,b)=1$, $x,y\not\,\mid F$ and $x^cy^dF$ has no terms which are a power of $x^ay^b$.
\item[3)] If $p$ is a 3-point, after possibly interchanging $u$ and $v$,
\begin{equation}\label{3-point}
u=(x^ay^bz^c)^l, v=P(x^ay^bz^c)+x^dy^ez^fF
\end{equation}
where $xyz=0$ is a local equation of $D$, $a,b,c>0$, $\mbox{gcd}(a,b,c)=1$, $x,y,z\not\,\mid F$ and $x^dy^ez^fF$ has no terms which are a power of $x^ay^bz^c$.
\end{enumerate}

regular parameters $x,y,z$ in $\hat{\mathcal O}_{X,p}$ giving forms (\ref{1-point}), (\ref{2-point}) or (\ref{3-point})
are called permissible parameters at $p$ for $u,v$.

Suppose that $X$ is 1-prepared. We define an ideal sheaf 
$$
\mathcal I = \mbox{ fitting ideal sheaf of the image of $f^*:\Omega^2_S\rightarrow \Omega^2_X(\mbox{log}(D))$}
$$
in ${\mathcal O}_X$. $\mathcal I=\mathcal O_X(-G)\overline{\mathcal I}$ where $G$ is an effective divisor supported on $D$ and 
$\overline {\mathcal I}$ has height $\ge 2$.

Suppose that $E_1,\ldots, E_n$ are the irreducible components of $D$. For $p\in X$, define
$$
\sigma_D(p)=\mbox{order}_{\mathcal O_{X,p}/(\sum_{p\in E_i}\mathcal I_{E_i,p})}
\overline{\mathcal I_p}
\left(\mathcal O_{X,p}/\sum_{p\in E_i}\mathcal I_{E_i,p}\right)\in\NN\cup \{\infty\}.
$$

\begin{Lemma}\label{upsemi} $\sigma_D$ is upper semicontinuous in the Zariski topology of the scheme  $X$.
\end{Lemma}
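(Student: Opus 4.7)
The plan is to reduce the upper semicontinuity of $\sigma_D$ to the classical fact that the order of a coherent ideal sheaf is upper semicontinuous on a Noetherian scheme. For each subset $A\subseteq\{1,\ldots,n\}$ of component indices, set $Y_A=\bigcap_{i\in A}E_i$ (with the convention $Y_\emptyset=X$) and $\mathcal J_A=\overline{\mathcal I}\mathcal O_{Y_A}$. If $p\in X$ and $A(p)=\{i:p\in E_i\}$, then $\mathcal O_{Y_{A(p)},p}=\mathcal O_{X,p}/\sum_{p\in E_i}\mathcal I_{E_i,p}$, and so directly from the definition $\sigma_D(p)=\mathrm{ord}_p\mathcal J_{A(p)}$.

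First, for each fixed $A$, the function $p\mapsto \mathrm{ord}_p\mathcal J_A$ is upper semicontinuous on $Y_A$, so $C_{A,n}=\{p\in Y_A:\mathrm{ord}_p\mathcal J_A\ge n\}$ is closed in $Y_A$, and hence closed in $X$ since $Y_A$ is closed in $X$. Because $D$ has only finitely many components, $X$ is the disjoint union of the finitely many locally closed strata $Z_A=Y_A\setminus\bigcup_{i\notin A}E_i$, and $A(p)=A$ for $p\in Z_A$; thus $\Sigma_n\cap Z_A=C_{A,n}\cap Z_A$, where $\Sigma_n=\{p\in X:\sigma_D(p)\ge n\}$.

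Finally, I would verify that $\Sigma_n$ is closed in $X$ by checking closure under specialization. Suppose $p'\rightsquigarrow p$ with $p'\in\Sigma_n$, and set $A'=A(p')$, $A=A(p)$. Since each $E_i$ is closed, $A'\subseteq A$, so $Y_A\subseteq Y_{A'}$ and we have a surjection $\mathcal O_{Y_{A'},p}\twoheadrightarrow\mathcal O_{Y_A,p}$ of local rings. Such a surjection sends the maximal ideal to the maximal ideal, hence can only increase the order of the image of $\overline{\mathcal I}$. Since $C_{A',n}$ is closed in $X$ and contains $p'$, it contains $p$, so $\mathrm{ord}_p\mathcal J_{A'}\ge n$; combining, $\sigma_D(p)=\mathrm{ord}_p\mathcal J_A\ge\mathrm{ord}_p\mathcal J_{A'}\ge n$, giving $p\in\Sigma_n$. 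The only substantive point beyond bookkeeping is this monotonicity of the order under quotienting by additional components $E_i$ that pass through $p$ but not through $p'$; everything else is standard.
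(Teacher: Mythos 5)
Your proof is correct and takes essentially the same approach as the paper: for each fixed index set $A$ the order of $\overline{\mathcal I}$ in $\mathcal O_X/\sum_{i\in A}\mathcal I_{E_i}$ is upper semicontinuous, and this order can only increase when $A$ is enlarged. The paper records exactly these two facts and leaves the final gluing implicit, which is the routine specialization/constructibility step you spell out.
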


\begin{proof} For a fixed subset $J\subset \{1,2,\ldots,n\}$, we have that the function
$$
\mbox{order}_{\mathcal O_{X,p}/(\sum_{i\in J}\mathcal I_{E_i,p})}
\overline{\mathcal I_p}
\left(\mathcal O_{X,p}/\sum_{i\in J}\mathcal I_{E_i,p}\right)
$$
is upper semicontinuous, and if $J\subset J'\subset \{1,2,\ldots, n\}$. we have that
$$
\mbox{order}_{\mathcal O_{X,p}/(\sum_{i\in J}\mathcal I_{E_i,p})}
\overline{\mathcal I_p}
\left(\mathcal O_{X,p}/\sum_{i\in J}\mathcal I_{E_i,p}\right)
\le \mbox{order}_{\mathcal O_{X,p}/(\sum_{i\in J'}\mathcal I_{E_i,p})}
\overline{\mathcal I_p}
\left(\mathcal O_{X,p}/\sum_{i\in J'}\mathcal I_{E_i,p}\right).
$$ 
\end{proof}

Thus for $r\in \NN\cup \{\infty\}$,
$$
\mbox{Sing}_r(X)=\{p\in X\mid \sigma_D(p)\ge r\}
$$
is a closed subset of $X$, which is supported on $D$ and has dimension $\le 1$ if $r>0$.

\begin{Definition}\label{Prep}
A point $p\in X$ is prepared  if $\sigma_D(p)=0$.
\end{Definition}

We have that $\sigma_D(p)=0$ if and only if $\overline {\mathcal I}_p=\mathcal O_{X,p}$. Further,
$$
\mbox{Sing}_1(X)=\{p\in X\mid \overline{\mathcal I}_p\ne\mathcal O_{X,p}\}.
$$
If $p\in X$ is a 1-point with an expression (\ref{1-point}) we have
\begin{equation}\label{li1}
(\overline{\mathcal I}_p+(x))\hat{\mathcal O}_{X,p}=(x,\frac{\partial F}{\partial y},\frac{\partial F}{\partial z}).
\end{equation}
If $p\in X$ is a 2-point with an expression (\ref{2-point}) we have
\begin{equation}\label{li2}
(\overline{\mathcal I}_p+(x,y))\hat{\mathcal O}_{X,p}=(x,y,(ad-bc)F,\frac{\partial F}{\partial z}).
\end{equation}
If $p\in X$ is a 3-point with an expression (\ref{3-point}) we have
\begin{equation}\label{li3}
(\overline{\mathcal I}_p+(x,y,z))\hat{\mathcal O}_{X,p}=(x,y,z,(ae-bd)F,(af-cd)F,(bf-ce)F).
\end{equation}

If $p\in X$ is a 1-point with an expression (\ref{1-point}), then $\sigma_D(p)=\mbox{ord }F(0,y,z)-1$.
We  have $0\le \sigma_D(p)<\infty$ if $p$ is a 1-point.
If $p\in X$ is a 2-point, we have 
$$
\sigma_D(p)=
\left\{\begin{array}{ll}
0&\mbox{ if }\mbox{ord }F(0,0,z)=0 \mbox{ (in this case, $ad-bc\ne 0$)}\\
\mbox{ord }F(0,0,z)-1&\mbox{ if }1\le \mbox{ord }F(0,0,z)<\infty\\
\infty&\mbox{ if }\mbox{ord }F(0,0,z)=\infty.
\end{array}
\right.
$$
If $p\in X$ is a 3-point, let 
$$
A=\left(\begin{array}{lll}
a&b&c\\
d&e&f
\end{array}\right).
$$
we have 
$$
\sigma_D(p)=\left\{\begin{array}{ll}
0&\mbox{ if }\mbox{ord }F(0,0,0)=0\mbox{ (in this case, $\mbox{rank}(A)=2$)}\\
\infty & \mbox{ if }\mbox{ord }F(0,0,0)=\infty.
\end{array}\right.
$$

\begin{Lemma}\label{Torgood} Suppose that $X$ is 1-prepared and $\pi_1:X_1\rightarrow X$ is a toroidal morphism with respect to $D$. Then $X_1$ is 1-prepared and $\sigma_D(p_1)\le \sigma_D(p)$ for all $p\in X$ and $p_1\in\pi_1^{-1}(p)$.
\end{Lemma}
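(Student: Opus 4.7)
The plan is to analyze $\pi_1$ locally at each $p_1\in X_1$, exploiting the fact that a toroidal morphism with respect to $D$ is, in suitable toroidal coordinates, a monomial substitution, and is log-\'etale. The two assertions — $1$-preparedness of $X_1$ and the inequality $\sigma_D(p_1)\le\sigma_D(p)$ — are both local, so such a local analysis suffices.

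For the first assertion: since $\pi_1$ is toroidal, $D_{X_1}=\pi_1^{-1}(D_X)_{\mathrm{red}}=(f\circ\pi_1)^{-1}(D_S)_{\mathrm{red}}$ is an SNC divisor on $X_1$; outside $D_{X_1}$ both $\pi_1$ and $f$ are smooth, so $f\circ\pi_1$ is smooth there. To check the disjointness condition, substitute the local monomial description of $\pi_1$ into the local form (\ref{1-point})--(\ref{3-point}) of $f$ at $p=\pi_1(p_1)$, and use that $\mathfrak k$ has characteristic zero to absorb unit factors into new parameters. This yields a local form of the same shape for $f\circ\pi_1$ at $p_1$; in particular, $u$ is a monomial in the coordinates cutting out the $D_{X_1}$-components through $p_1$. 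Hence every component of $D_{X_1}$ through $p_1$ dominates $C_1=\{u=0\}$ and none dominates $C_2=\{v=0\}$, so the disjointness condition of Definition~\ref{1-prepdef} holds vacuously.

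For the second assertion, the log-\'etale property gives $\pi_1^*\Omega^1_X(\log D_X)\cong\Omega^1_{X_1}(\log D_{X_1})$, and hence $\pi_1^*\Omega^2_X(\log D_X)\cong\Omega^2_{X_1}(\log D_{X_1})$. Under this isomorphism the image of $(f\circ\pi_1)^*$ corresponds to the pullback of the image of $f^*$, so the fitting ideals satisfy $\mathcal I_1=\mathcal I\cdot\mathcal O_{X_1}$. Factoring $\mathcal I_1=\mathcal O_{X_1}(-G_1)\overline{\mathcal I}_1$ and $\mathcal I=\mathcal O_X(-G)\overline{\mathcal I}$, and using that $V(\overline{\mathcal I})\subseteq D_X$ forces any divisorial component of $\overline{\mathcal I}\cdot\mathcal O_{X_1}$ to be supported on $D_{X_1}$, one obtains $\overline{\mathcal I}_1\supseteq\overline{\mathcal I}\cdot\mathcal O_{X_1}$, with $G_1-\pi_1^*G$ effective and supported on $D_{X_1}$.

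Finally, combine the local form of $f\circ\pi_1$ at $p_1$ with (\ref{li1})--(\ref{li3}) to express $\sigma_D(p_1)$ in terms of the transformed error function $F_1$ (obtained from $F$ by the monomial substitution, after factoring out any surplus powers of the new coordinates), and compare with $\sigma_D(p)$ by a case analysis on the point-types of $p$ and $p_1$. The anticipated main obstacle is the ``type-lowering'' case in which $p$ is a $3$-point (or $2$-point) while $p_1$ is a point of lower type on the exceptional divisor: the strata through $p$ and $p_1$ then have different dimensions, and one must carefully track how the toroidal monomial substitution and the accompanying unit factors (whose exponents are rational combinations of the exponents $a,b,c,d,e,f$) interact with the conditions ``$x,y,z\nmid F$'' and ``$x^dy^ez^fF$ has no pure-monomial terms'' together with the implicit rank condition on the exponent matrix, in order to conclude that no spurious decrease in $\sigma_D$ occurs.
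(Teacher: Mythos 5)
Your strategy is the same as the paper's: work locally at $p_1$, substitute the monomial description of $\pi_1$ into the forms (\ref{1-point})--(\ref{3-point}), and read off $\sigma_D(p_1)$ from the transformed $F_1$ via (\ref{li1})--(\ref{li3}). The log-\'etale observation $\pi_1^*\Omega^2_X(\mbox{log }D_X)\cong\Omega^2_{X_1}(\mbox{log }D_{X_1})$, hence $\mathcal I_1=\mathcal I\,\mathcal O_{X_1}$, is correct, and your 1-preparedness argument is fine. But note that the fitting-ideal remark cannot by itself give $\sigma_D(p_1)\le\sigma_D(p)$: $\sigma_D$ is the order of $\overline{\mathcal I}$ in the quotient by the ideal of the $D$-stratum through the point, and that stratum changes dimension exactly in the type-lowering cases, so the pointwise comparison still has to be made by an explicit computation.

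That computation is the genuine gap: the case analysis you yourself flag as ``the anticipated main obstacle'' is announced but never carried out, and it is precisely where the content of Lemma \ref{Torgood} lies. For comparison, the paper disposes of it as follows. At a 3-point $p$ the difficulty you anticipate does not arise: $\sigma_D(p)\in\{0,\infty\}$, so either the inequality is vacuous or $F$ is a unit, and substituting (\ref{eq94}) visibly yields a prepared form. At a 2-point $p$ with $p_1$ a 1-point, write $x=x_1^{a_{11}}(y_1+\alpha)^{a_{12}}$, $y=x_1^{a_{21}}(y_1+\alpha)^{a_{22}}$ with $\alpha\ne 0$; set $\lambda=\frac{aa_{12}+ba_{22}}{aa_{11}+ba_{21}}$ and $\overline x_1=x_1(y_1+\alpha)^{\lambda}$, so that $u$ becomes a pure power of $\overline x_1$ and $\overline x_1,y_1,z$ are permissible at $p_1$. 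Since $a_{11},a_{21}>0$ (because $x\circ\pi_1$ and $y\circ\pi_1$ vanish at $p_1$), the monomials $z^k$ occurring in $F(0,0,z)$, which compute $\sigma_D(p)$, survive the substitution as terms not divisible by $\overline x_1$; after factoring out the content in $\overline x_1$ and absorbing pure powers of $\overline x_1$ into $P$, one gets $\mbox{ord}\,F_1(0,y_1,z)\le\mbox{ord}\,F(0,0,z)$, i.e.\ $\sigma_D(p_1)\le\sigma_D(p)$, and the 2-point-to-2-point case is the same with $\alpha=0$. Over 1-points of $X$ a toroidal morphism for $D$ is a local isomorphism, so nothing need be checked there. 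Until an argument of this kind is actually supplied, your proposal proves 1-preparedness but not the inequality, which is the substance of the lemma.
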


\begin{proof}
Suppose that $p\in X$ is a 2-point and $p_1\in\pi_1^{-1}(p)$. Then there exist permissible parameters $x,y,z$ at $p$ giving an expression (\ref{2-point}). In $\hat{\mathcal O}_{X_1,p_1}$, there are regular parameters $x_1,y_1,z$ where
\begin{equation}\label{eq93}
x=x_1^{a_{11}}(y_1+\alpha)^{a_{12}},\,\,
y=x_1^{a_{21}}(y_1+\alpha)^{a_{22}}
\end{equation}
with $\alpha\in \mathfrak k$ and $a_{11}a_{22}-a_{12}a_{22}=\pm1$.
If $\alpha=0$, so that $p_1$ is a 2-point, then $x_1,y_1,z$ are permissible parameters at $p_1$ and  substitution of (\ref{eq93}) into (\ref{2-point}) gives an expression of the form (\ref{2-point}) at $p_1$, showing that $\sigma_D(p_1)\le \sigma_D(p)$. If $\alpha\ne 0\in \mathfrak k$, so that $p_1$ is a 1-point, set
$\lambda=\frac{aa_{12}+ba_{22}}{aa_{11}+ba_{21}}$ and $\overline x_1=x_1(y_1+\alpha)^{\lambda}$. Then $\overline x_1, y_1, z$ are permissible parameters at $p_1$. Substitution into (\ref{2-point}) leads to a form (\ref{1-point}) with $\sigma_D(p_1)\le \sigma_D(p)$.

If $p\in X$ is a 3-point and $\sigma_D(p)\ne \infty$, then $\sigma_D(p)=0$ so that $p$ is prepared.
Thus there exist permissible  parameters $x,y,z$ at $p$ giving an expression (\ref{3-point}) with $F=1$.
Suppose that $p_1\in \pi_1^{-1}(p)$. In $\hat{\mathcal O}_{X_1,p_1}$ there are regular parameters $x_1,y_1,z_1$ such that
\begin{equation}\label{eq94}
\begin{array}{lll}
x&=&(x_1+\alpha)^{a_{11}}(y_1+\beta)^{a_{12}}(z_1+\gamma)^{a_{13}}\\
y&=&(x_1+\alpha)^{a_{21}}(y_1+\beta)^{a_{22}}(z_1+\gamma)^{a_{23}}\\
z&=&(x_1+\alpha)^{a_{31}}(y_1+\beta)^{a_{32}}(z_1+\gamma)^{a_{33}}
\end{array}
\end{equation}
where at least one of $\alpha,\beta,\gamma\in \mathfrak k$ is  zero. Substituting into (\ref{3-point}), we find permissible parameters at $p_1$ giving a prepared form.
\end{proof}

Suppose that $X$ is 1-prepared with respect to $f:X\rightarrow S$. Define 
$$
\Gamma_D(X)=\mbox{max}\{\sigma_D(p)\mid p\in X\}.
$$

\begin{Lemma}\label{Specialize} Suppose that $X$ is 1-prepared and $C$ is a 2-curve of $D$ and there exists $p\in C$ such that $\sigma_D(p)<\infty$. Then $\sigma_D(q)=0$ at the generic point $q$ of $C$.
\end{Lemma}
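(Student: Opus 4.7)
The plan is to reduce to the upper semicontinuity of $\sigma_D$ by first showing that at the generic point of a $2$-curve, $\sigma_D$ can only take the values $0$ or $\infty$.

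Since $C$ is a $2$-curve, locally around $C$ there are exactly two components $E_i, E_j$ of $D$ with $C = E_i \cap E_j$, and by the SNC hypothesis they meet transversally along $C$. The generic point $q$ of $C$ has codimension $2$ in $X$, so $\mathcal{O}_{X,q}$ is a $2$-dimensional regular local ring. Choosing local equations $x, y$ for $E_i, E_j$ at a closed point of $C$ and passing to $\mathcal{O}_{X,q}$, one sees that $\mathcal{I}_{E_i,q} + \mathcal{I}_{E_j,q} = (x,y)$ is precisely the maximal ideal $\mathfrak{m}_q$, so the quotient appearing in the definition of $\sigma_D$ is the residue field $\kappa(q)$. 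In any field the order of an ideal is $0$ (for a nonzero ideal) or $\infty$ (for the zero ideal), so $\sigma_D(q) \in \{0, \infty\}$.

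It remains to rule out $\sigma_D(q) = \infty$. If $\sigma_D(q) = \infty$, then $q \in \mbox{Sing}_r(X)$ for every $r$; since each $\mbox{Sing}_r(X)$ is closed by the upper semicontinuity lemma already proved, it contains $\overline{\{q\}} = C$. This would force $\sigma_D(p') = \infty$ at every $p' \in C$, contradicting the hypothesis that some $p \in C$ satisfies $\sigma_D(p) < \infty$. Hence $\sigma_D(q) = 0$.

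I do not anticipate any real obstacle here. The only point requiring a moment's thought is the order convention: for a local ring $R$ with maximal ideal $\mathfrak{m}$, the order of an ideal $\mathcal{J}$ is $\sup\{n : \mathcal{J} \subseteq \mathfrak{m}^n\}$, which in a field (where $\mathfrak{m} = 0$) is $0$ if $\mathcal{J} \ne 0$ and $\infty$ if $\mathcal{J} = 0$, matching the dichotomy used above. Once this is noted, the lemma is immediate.
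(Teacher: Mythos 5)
Your proof is correct, but it takes a genuinely different route from the paper's. You argue directly at the generic point: exactly two components of $D$ contain $q$, their local equations generate the ideal of $C$ and hence the maximal ideal of the two-dimensional regular local ring $\mathcal O_{X,q}$, so the quotient appearing in the definition of $\sigma_D$ is the residue field and $\sigma_D(q)\in\{0,\infty\}$; the value $\infty$ is then excluded by upper semicontinuity (each $\mbox{Sing}_r(X)$ is closed, so $\sigma_D(q)=\infty$ would force $\sigma_D\equiv\infty$ on $C=\overline{\{q\}}$, against the hypothesis at $p$ --- equivalently, semicontinuity gives $\sigma_D(q)\le\sigma_D(p)<\infty$ directly). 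The paper instead works at closed points: after the cases where $p$ is a 3-point or $\sigma_D(p)=0$ (settled by semicontinuity alone), it takes a 2-point $p$ with $0<\sigma_D(p)<\infty$, uses the local form (\ref{2-point}) and the fact that $\frac{\partial F}{\partial z}(0,0,z)\ne 0$ to produce, for general $\alpha$, a nearby closed 2-point $\overline p\in C$ at which $\mbox{ord}\,F_1(0,0,\overline z)\le 1$, hence $\sigma_D(\overline p)=0$, and then applies semicontinuity. Your version is shorter and more conceptual, exploiting the same dichotomy that makes $\sigma_D$ take only the values $0$ and $\infty$ at 3-points, where the relevant quotient is likewise a field; the paper's coordinate computation explicitly exhibits a dense set of prepared closed 2-points on $C$, which is the form in which the lemma is typically invoked later, though that density also follows from your statement combined with semicontinuity. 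Both arguments rest on the upper semicontinuity lemma, and your identification of $\mathcal I_{E_i,q}+\mathcal I_{E_j,q}$ with $\mathfrak m_q$ is indeed justified by the SNC hypothesis, since the two local equations form part of a regular system of parameters at any closed point of $C$ and cut out $C$ there.
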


\begin{proof} If $p$ is a 3-point then $\sigma_D(p)=0$ and the lemma follows from upper semicontinuity of $\sigma_D$.

Suppose that $p$ is a 2-point. If $\sigma_D(p)=0$ then the lemma follows from upper semicontinuity of
$\sigma_D$, so suppose that $0<\sigma_D(p)<\infty$. There exist permissible parameters $x,y,z$ at $p$ giving a form (\ref{2-point}), such that $x,y,z$ are uniformizing parameters on an \'etale cover $U$ of an affine neighborhood of $p$. Thus for $\alpha$ in a Zariski open subset of $\mathfrak k$, $x,y,\overline z=z-\alpha$ are permissible parameters at a 2-point $\overline p$ of $C$. After possibly replacing $U$ with a smaller neighborhood of $p$, we have 
$$
\frac{\partial F}{\partial z}=\frac{1}{x^cy^d}\frac{\partial v}{\partial z}\in \Gamma(U,\mathcal O_X)
$$
and $\frac{\partial F}{\partial z}(0,0,z)\ne 0$. Thus there exists a 2-point $\overline p\in C$ with permissible parameters $x,y,\overline z=z-\alpha$ such that $\frac{\partial F}{\partial z}(0,0,\alpha)\ne 0$,
and thus there is an expression (\ref{2-point}) at $\overline p$
$$
\begin{array}{lll}
u&=&(x^ay^b)^{l}\\
v&=& P_1(x^ay^b)+x^cy^dF_1(x,y,\overline z)
\end{array}
$$
with $\mbox{ord }F_1(0,0,\overline z)=0\mbox{ or }1$, so that $\sigma_D(\overline p)=0$. By upper semicontinuity of $\sigma_D$, $\sigma_D(q)=0$.
\end{proof}

\begin{Proposition}\label{Step2}  Suppose that $X$ is 1-prepared with respect to $f:X\rightarrow S$. Then there exists a toroidal morphism $\pi_1:X_1\rightarrow X$ with respect to $D$, such that $\pi_1$ is a sequence of blow ups of 2-curves and 3-points,
and 
\begin{enumerate}
\item[1)] $\sigma_{D}(p)<\infty$ for all $p\in D_{X_1}$.
\item[2)] $X_1$ is  prepared (with respect to $f_1=f\circ \pi_1:X_1\rightarrow S$) at all 3-points and the generic point of all 2-curves of $D_{X_1}$.
\end{enumerate}
\end{Proposition}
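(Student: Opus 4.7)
The plan is to first achieve condition (1) and then deduce condition (2) as a corollary. For the deduction: at a 3-point $p \in D_{X_1}$, the explicit formula for $\sigma_D$ given above shows $\sigma_D(p) \in \{0,\infty\}$, so condition (1) forces $\sigma_D(p) = 0$. At the generic point $q$ of a 2-curve $C$ of $D_{X_1}$, pick any $p \in C$; since $\sigma_D(p) < \infty$ by (1), Lemma \ref{Specialize} yields $\sigma_D(q) = 0$.

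To achieve (1), I would work with the bad locus $B = \{p \in X : \sigma_D(p) = \infty\}$, which is closed, supported on $D$, and of dimension at most $1$. By the explicit local formulas for $\sigma_D$, its $1$-dimensional irreducible components are exactly those 2-curves $C$ of $D$ along which $\sigma_D \equiv \infty$ (equivalently, $F \in (x,y)$ in permissible parameters at any 2-point of $C$), and its remaining points are isolated 3-points $p$ with $F \in \mathfrak m_p$. The strategy is an iterated procedure of toroidal blow ups of exactly these loci; by Lemma \ref{Torgood} such blow ups do not raise $\sigma_D$ anywhere, so $B$ can only shrink or migrate. For a bad 2-curve $C$, define $\nu_C$ as the $(x,y)$-adic order of $F$ in the formal completion at a 2-point of $C$ (one first checks this is independent of the chosen 2-point, using that the local ring at the generic point of $C$ is a DVR). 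For a bad 3-point $p$, define $\mu_p = \mbox{ord}_{\mathfrak m_p} F$. Both invariants are finite nonnegative integers because $F \ne 0$ (dominance of $f$ prevents $v$ from being a function of $u$ alone) together with Krull's intersection theorem. A direct monomial transform computation, of exactly the type appearing in the proof of Lemma \ref{Torgood}, shows that blowing up $C$ replaces $F$ in each chart by $F_1 = F/x_1^{\nu_C}$, whose leading $(x_1,y_1)$-form is determined by the coefficients of $x^i y^{\nu_C - i}$ in the initial form of $F$; hence either some point above $C$ is a $1$-point or a 2-point with $\sigma_D<\infty$, or the surviving bad 2-curves on the exceptional divisor have strictly smaller $\nu$. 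An analogous computation shows that blowing up a bad 3-point $p$ drops $\mu$ by at least one at every bad point above.

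The main obstacle is organizing termination, because blowing up a bad 3-point can create new bad 2-curves on the exceptional $\PP^2$, and blowing up a bad 2-curve can create new bad 3-points where the exceptional meets strict transforms of other components of $D$. The cleanest way to arrange termination is a lexicographic induction: first eliminate all bad 2-curves by iterated blow ups, driving $\max_C \nu_C$ down to $0$ and reducing $B$ to a finite set; then blow up the remaining bad 3-points, driving $\max_p \mu_p$ down to $0$ until $B = \emptyset$. One must verify that the second stage does not recreate $1$-dimensional components of $B$, which follows from the monomial structure of the toroidal transforms and careful bookkeeping of the initial forms of $F$ on each new exceptional divisor. Once $B = \emptyset$, condition (1) holds, and the first paragraph gives condition (2).
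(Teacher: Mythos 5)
Your deduction of (2) from (1) is fine and your overall plan (toroidal blow ups of 2-curves and 3-points, using Lemma \ref{Torgood}, Lemma \ref{Specialize} and upper semicontinuity) is consistent with the paper, but the termination argument contains a genuine gap: the strict descent you claim for $\nu_C$ and $\mu_p$ is false. At a 2-point with a form (\ref{2-point}) take $F=zy^{\nu}+x^{N}$ with $N\ge 2\nu$ (choosing $N$ so that $x^{c+N}y^{d}$ is not a power of $x^{a}y^{b}$); then $\sigma_D=\infty$ along $C:x=y=0$ and $\nu_C=\nu$, while in the chart $x=x_1$, $y=x_1y_1$ of the blow up of $C$ one gets $F_1=zy_1^{\nu}+x_1^{N-\nu}$, so the new 2-curve $x_1=y_1=0$ is again bad with invariant $\min(\nu,N-\nu)=\nu$: no decrease. (Your disjunction is then only rescued by the good 2-point in the other chart, which is always available and does not help termination.) Likewise $F=yz+x^{N}$ with $N\ge 4$ at a 3-point transforms in the chart $x=x_1$, $y=x_1y_1$, $z=x_1z_1$ into $F_1=y_1z_1+x_1^{N-2}$, so $\mu$ does not drop at the new 3-point. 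Finally, the stability you need for the lexicographic scheme also fails: for an isolated bad 3-point with $F=yz^{m-1}+x^{M}+y^{M}+z^{M}$, $M\gg m$, no 2-curve through $p$ is bad, yet the blow up of $p$ creates a bad 2-curve ($x_1=y_1=0$ in the $x$-chart) on the exceptional divisor, so your second stage can recreate one-dimensional components of $B$; the ``careful bookkeeping'' you appeal to is precisely the content that is missing.

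The idea you are missing, and the paper's actual mechanism, is to replace the naive order by an ideal to be principalized. At a bad 2-point write $F=\sum a_i(x,y)z^{i}$ and principalize the coefficient ideal $I=(a_i(x,y))$ in $\mathfrak k[[x,y]]$ by a sequence of blow ups of 2-curves; at a bad 3-point principalize the ideal generated by the monomials $\tilde x^{i}\tilde y^{j}\tilde z^{k}$ occurring in $F$ by a toroidal morphism for $D$. Once $I$ is locally principal, $F$ pulls back at each 2-point (resp. 3-point) over $p$ to a monomial times a series $F_1$ with $F_1(0,0,z)\ne 0$ (resp. $F_1(0,0,0)\ne 0$), so $\sigma_D<\infty$ (resp. $\sigma_D=0$) there, and the proposition follows from upper semicontinuity together with Lemmas \ref{Specialize} and \ref{Torgood}. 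Termination is thereby delegated to the standard combinatorial principalization of such ideals along the toroidal structure, which is exactly what your single-step order-drop argument fails to reproduce; if you want to keep your inductive framework you need a genuinely finer invariant (for instance the Newton polygon of the coefficient ideal) in place of $\nu_C$ and $\mu_p$.
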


\begin{proof} By upper semicontinuity of $\sigma_D$,  Lemma \ref{Specialize} and Lemma \ref{Torgood}, we must show that if $p\in X$ is a 3-point with $\sigma_D(p)=\infty$ then there exists a toroidal morphism $\pi_1:X_1\rightarrow X$ such that $\sigma_D(p_1)=0$ for all 3-points $p_1\in \pi_1^{-1}(p)$ and if $p\in X$ is a 2-point with $\sigma_D(p)=\infty$ then there exists a toroidal morphism $\pi_1:X_1\rightarrow X$ such that $\sigma_D(p_1)<\infty$ for all 2-points $p_1\in \pi_1^{-1}(p)$.

First suppose that $p$ is a 3-point with $\sigma_D(p)=\infty$. Let $x,y,z$ be permissible parameters at $p$ giving a form (\ref{3-point}). There exist regular parameters $\tilde x,\tilde y, \tilde z$ in $\mathcal O_{X,p}$ and unit series $\alpha,\beta,\gamma\in \hat{\mathcal O}_{X,p}$ such that $x=\alpha\tilde x$, $y=\beta\tilde y$, $z=\gamma\tilde z$. Write $F=\sum b_{ijk}x^iy^jz^k$ with $b_{ijk}\in \mathfrak k$. Let
$I=(\tilde x^i\tilde y^j\tilde z^k\mid b_{ijk}\ne 0)$, an ideal in $\mathcal O_{X,p}$. Since $\tilde x\tilde y\tilde z=0$ is a local equation of $D$ at $p$, there exists a toroidal  morphism $\pi_1:X_1\rightarrow X$
with respect to $D$ such that $I\mathcal O_{X_1,p_1}$ is principal for all $p_1\in \pi_1^{-1}(p)$. At a 3-point $p_1\in \pi_1^{-1}(p)$, there exist permissible parameters $x_1,y_1,z_1$ such that 
$$
\begin{array}{lll}
x&=& x_1^{a_{11}}y_1^{a_{12}}z_1^{a_{13}}\\
y&=& x_1^{a_{21}}y_1^{a_{22}}z_1^{a_{23}}\\
z&=& x_1^{a_{31}}y_1^{a_{32}}z_1^{a_{33}}
\end{array}
$$
with $\mbox{Det}(a_{ij})=\pm 1$. Substituting into (\ref{3-point}), we obtain an expression (\ref{3-point}) at $p_1$, where
$$
\begin{array}{lll}
u&=& (x_1^{a_1}y_1^{b_1}z_1^{c_1})^l\\
v&=& P_1(x_1^{a_1}y_1^{b_1}z_1^{c_1})+x_1^{d_1}y_1^{e_1}z_1^{f_1}F_1
\end{array}
$$
where  $P_1(x_1^{a_1}y_1^{b_1}z_1^{c_1})=P(x^ay^bz^c)$ and 
$$
F(x,y,z)=x_1^{\overline a}y_1^{\overline b}z_1^{\overline c}F_1(x_1,y_1,z_1).
$$
with $x_1^{\overline a}y_1^{\overline b}z_1^{\overline c}$ a generator of $I\hat{\mathcal O}_{X_1,p_1}$
and $F_1(0,0,0)\ne 0$. Thus $\sigma_D(p_1)=0$.

Now suppose that $p$ is a 2-point and $\sigma_D(p)=\infty$. There exist permissible parameters $x,y,z$ at $p$ giving a form (\ref{2-point}). Write $F=\sum a_i(x,y)z^i$, with $a_i(x,y)\in \mathfrak k[[x,y]]$ for all $i$. We necessarily have that no $a_i(x,y)$ is a unit series.

Let $I$ be the ideal $I=(a_i(x,y)\mid i\in \NN)$ in $\mathfrak k[[x,y]]$.  There exists a sequence of blow ups of 2-curves $\pi_1:X_1\rightarrow X$ such that $\hat{\mathcal O}_{X_1,p_1}$ is principal at all 2-points $p_1\in\pi_1^{-1}(p)$.
There exist  $x_1,y_1\in\mathcal O_{X_1,p_1}$ so that $x_1,y_1,z$ are permissible parameters at $p_1$, and
$$
x=x_1^{a_{11}}y_1^{a_{12}},\,\,y=x_1^{a_{21}}y_1^{a_{22}}
$$
with $a_{11}a_{22}-a_{12}a_{21}=\pm 1$. Let $x_1^{\overline a}y_1^{\overline b}$ be a  generator of 
$I\mathcal O_{T_1,q_1}$. Then $F=x_1^{\overline a}y_1^{\overline b}F_1(x_1,y_1,z)$ where $F_1(0,0,z)\ne 0$,
and we have an expression (\ref{2-point}) at $p_1$, where
$$
\begin{array}{lll}
u&=& (x_1^{a_1}y_1^{b_1})^{l_1}\\
v&=& P_1(x_1^{a_1}y_1^{b_1})+x_1^{d_1}y_1^{e_1}F_1
\end{array}
$$
where $P_1(x_1^{a_1}y_1^{b_1})=P(x^ay^b)$. Thus $\sigma_D(p_1)<\infty$ and $\sigma_D(q)<\infty$ if $q$ is the generic point of the 2-curve of $D_{X_1}$ containing $p_1$.

\end{proof}

We will say that  $X$ is 2-prepared (with respect to $f:X\rightarrow S$) if it satisfies the conclusions of Proposition \ref{Step2}.
We then have that $\Gamma_D(X)<\infty$.

If $X$ is 2-prepared, we have that  $\mbox{Sing}_{1}(X)$ is a union of (closed) curves whose generic point is a 1-point and isolated 1-points and 2-points. Further, $\mbox{Sing}_1(X)$ contains no 3-points.

\section{3-preparation}\label{Section3}

\begin{Lemma}\label{localform} Suppose that $X$ is 2-prepared. Suppose that $p\in X$ is such that
$\sigma_D(p)>0$. Let  $m=\sigma_D(p)+1$.
Then there exist permissible parameters $x,y,z$ at $p$ such that there exist
$\tilde x,y\in\mathcal O_{X,p}$, an \'etale cover $U$ of an affine neighborhood of $p$, such that  $x,z\in\Gamma(U,\mathcal O_X)$ and   $x, y, z$ are uniformizing parameters on $U$, and 
$x=\gamma \tilde x$ for some unit series $\gamma\in \hat{\mathcal O}_{X,p}$. 
We have an expression (\ref{1-point}) or (\ref{2-point}), if $p$ is respectively a 1-point or a 2-point, with
\begin{equation}\label{eq1}
F=\tau z^m+ a_2(x,y)z^{m-2}+\cdots+ a_{m-1}(x,y)z+a_m(x,y)
\end{equation}
where $m\ge 2$ and $\tau\in \hat{\mathcal O}_{X_1,p}=\mathfrak k[[x,y,z]]$ is a unit, and $a_i(x,y)\ne 0$ for $i=m-1$ or $i=m$. Further, if $p$ is a 1-point, then we can choose $x,y,z$ so that $x=y=0$ is a local equation of a generic curve through $p$  on $D$.

For all but finitely many points $p$ in the set of 1-points of $X$, there is an expression (\ref{eq1}) where
\begin{equation}\label{eq1+}
\begin{array}{l}
\mbox{$a_i$ is either zero or has an expression $a_i=\overline a_ix^{r_i}$ where $\overline a_i$ is a unit}\\ \mbox{and $r_i>0$ for $2\le i\le m$, and $a_m=0$ or $a_m=x^{r_m}\overline a_m$ where $r_m>0$ and $\mbox{ord}(\overline a_m(0,y))=1$.}
\end{array}
\end{equation}
\end{Lemma}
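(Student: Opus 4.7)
My plan is to apply Weierstrass preparation and a Tschirnhaus transformation to $F$, after first arranging $z$-regularity, and then to use a geometric analysis along the 1-dimensional components of $\mathrm{Sing}_1(X)$ for the (\ref{eq1+}) refinement.

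First, starting from permissible parameters $x,y,z$ at $p$ giving (\ref{1-point}) or (\ref{2-point}), I would arrange that $F \in \hat{\mathcal{O}}_{X,p}$ is $z$-regular of order $m$. For a 2-point this is immediate from $\sigma_D(p) = \mathrm{ord}_z F(0,0,z) - 1$. For a 1-point one has $\mathrm{ord}_{y,z} F(0,y,z) = m$, and a linear change $y \mapsto y + \lambda z$ with $\lambda \in \mathfrak{k}$ chosen so that $F_m(\lambda,1) \neq 0$ (where $F_m$ is the degree-$m$ leading form of $F(0,y,z)$) makes $F(0,0,z)$ of order exactly $m$. Since $x=0$ alone defines $D$ at a 1-point, this change preserves the shape of (\ref{1-point}); the same generic choice of $y$ lets $\{x=y=0\}$ cut out a generic curve through $p$ on $D$.

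With $F$ now $z$-regular of order $m$, Weierstrass preparation gives $F = \tilde\tau \cdot W$ with $\tilde\tau \in \hat{\mathcal{O}}_{X,p}^{\times}$ and $W = z^m + c_1(x,y) z^{m-1} + \cdots + c_m(x,y)$, $c_i \in (x,y)\mathfrak{k}[[x,y]]$. Writing $F = \sum_k b_k(x,y) z^k$ with $b_m$ a unit (by $z$-regularity), an iterative Tschirnhaus substitution $z \mapsto z - \beta(x,y)$ kills the coefficient of $z^{m-1}$. Collecting the $z$-degree $\geq m$ terms into a unit $\tau \in \mathfrak{k}[[x,y,z]]^{\times}$ and setting $a_i := b_{m-i}'$ (primes denoting new coefficients) puts $F$ into the form (\ref{eq1}); nonvanishing of $a_{m-1}$ or $a_m$ holds unless $F$ reduces to the purely monomial-in-$z$ form $\tau z^m$, a degenerate case we may exclude. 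To realize the algebraic conditions I pass to an \'etale cover $U$ of an affine neighborhood of $p$ on which the formal parameter $z$ becomes an algebraic function and $(x,y,z)$ form uniformizing parameters; the original algebraic parameter $\tilde x \in \mathcal{O}_{X,p}$ and the formally adjusted $x$ differ by a unit $\gamma \in \hat{\mathcal{O}}_{X,p}^{\times}$, while $y$ can be taken algebraic from the start.

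For the refinement (\ref{eq1+}), the set of 1-points with $\sigma_D > 0$ lies in the closed set $\mathrm{Sing}_1(X)$, which has dimension $\leq 1$ and hence decomposes into finitely many isolated 1-points together with 1-dimensional components. Discarding the isolated points, a general 1-point $p$ lies on a curve $C$ whose generic point is a 1-point; I would choose $y$ so that $\{x=y=0\}$ defines $C$ locally. The generic constancy $\sigma_D \equiv m-1$ along $C$, read through (\ref{li1}) as a statement about the ideal $(\partial F/\partial y, \partial F/\partial z) \bmod (x)$ at the generic point $\eta_C$, forces $a_i = \overline a_i x^{r_i}$ for $i < m$ with $\overline a_i$ a unit and $r_i > 0$, and $a_m = x^{r_m}\overline a_m$ with $\mathrm{ord}(\overline a_m(0,y))=1$; the order-$1$ condition on $\overline a_m(0,y)$ records that $\sigma_D$ does not strictly exceed $m-1$ generically on $C$, pinning down a nonzero linear-in-$y$ contribution. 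The main technical obstacle is this final translation, which requires careful jet analysis of $F$ at $\eta_C$ together with simultaneous use of both the upper and lower bounds on $\sigma_D$ extracted from (\ref{li1}).
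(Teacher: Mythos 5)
Your overall route matches the paper's (arrange $z$-regularity, kill the $z^{m-1}$ coefficient, pass to an \'etale cover, then analyze general points of the one-dimensional part of $\mbox{Sing}_1(X)$ for (\ref{eq1+})), but the two steps that carry the real content are asserted rather than proved. The principal gap is the algebraicity of the shift: your Tschirnhaus series $\beta(x,y)$ is a priori only a formal power series, and a formal change of coordinates cannot in general be realized on an \'etale cover, so the sentence ``the formal parameter $z$ becomes an algebraic function'' is precisely what needs proof. The mechanism is to produce the shift as the solution of $\frac{\partial^{m-1}F}{\partial \overline z^{m-1}}=0$ and to observe that $\frac{\partial^{m-1}F}{\partial \overline z^{m-1}}=\frac{1}{x^{b}}\frac{\partial^{m-1}v}{\partial \overline z^{m-1}}$ is a genuine regular function on the cover (differentiation in $\overline z$ kills the possibly transcendental series $P(x)$, which is why $F$ itself is only formal while its $z$-derivatives are algebraic); Weierstrass preparation for Henselian local rings then shows that $\phi(x,y)$ is integral over $\mathfrak k[x,y]_{(x,y)}$, hence becomes regular on a further \'etale cover. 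Since the entire point of the lemma is to supply algebraic data ($x,z\in\Gamma(U,\mathcal O_X)$) for the later global constructions, this omission is not cosmetic.

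Second, your treatment of ``$a_{m-1}\ne 0$ or $a_m\ne 0$'' is both mischaracterized and unjustified: if $a_{m-1}=a_m=0$ the conclusion is only that $z^2\mid F$ (much weaker than $F=\tau z^m$), and nothing in your argument excludes this case. The exclusion uses the hypothesis that $D$ contains the non-smooth locus of $f$: $z^2\mid F$ forces the ideal of $2\times 2$ minors of the Jacobian of $(u,v)$ into $(z)$, so $z=0$ would be a component of the non-smooth locus not contained in $D$, a contradiction. Finally, in the (\ref{eq1+}) part, the condition $\mbox{ord}(\overline a_m(0,y))=1$ does not follow from ``$\sigma_D$ does not strictly exceed $m-1$ generically on $C$'' as you claim; it comes from the normalization that $x^bF$ has no terms that are pure powers of $x$ combined with genericity of the chosen point of $C$, and one must also verify that the shift $z=\overline z-\phi(x,y)$ preserves divisibility of the $a_i$ by $x$ (the paper does this by noting that $\phi$ is a unit times $a_1$ and that $x\mid a_1$ at a general point of $C$). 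You correctly flag this last step as delicate, but the two gaps above are the ones that would sink the proof as written.
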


\begin{proof} There exist regular parameters $\tilde x,y, \overline z$ in $\mathcal O_{X,p}$ and
a unit $\gamma\in\hat{\mathcal O}_{X,p}$ such that $x=\gamma\tilde x, y, \overline  z$ are permissible parameters at $p$,  with 
$\mbox{ord}(F(0,0,\overline z))=m$. 
Thus there exists an affine neighborhood $\mbox{Spec}(A)$ of $p$ such that $V=\mbox{Spec}(R)$, where $R=A[\gamma^{\frac{1}{a}}]$ is an \'etale cover of $\mbox{Spec}(A)$, $x,y,\overline z$ are uniformizing parameters on $V$, and $u,v\in \Gamma(V,\mathcal O_X)$.
Differentiating with respect to the uniformizing parameters
$x,y,\overline z$ in $R$, set
\begin{equation}\label{eqLF2}
\tilde  z=\frac{\partial^{m-1}F}{\partial \overline z^{m-1}}=\omega(\overline z-\phi(x,y))
\end{equation}
where $\omega\in\hat{\mathcal O}_{X,p}$ is a unit series, and $\phi(x,y)\in \mathfrak k[[x,y]]$
is a nonunit series, by the formal implicit function theorem. Set $z=\overline z-\phi(x,y)$.
Since $R$ is normal, after possibly replacing $\mbox{Spec}(A)$ with a smaller affine neighborhood
of $p$, 
$$
\tilde z=\frac{1}{x^b}\frac{\partial^{m-1}v}{\partial \overline z^{m-1}}\in R.
$$
By Weierstrass preparation for Henselian local rings (Proposition 6.1 \cite{R}), $\phi(x,y)$ is integral over the local ring $\mathfrak k[x,y]_{(x,y)}$. Thus after possibly replacing $A$ with a smaller affine neighborhood of $p$, there exists an \'etale cover $U$ of $V$ such that $\phi(x,y)\in\Gamma(U,\mathcal O_X)$, and thus $z\in \Gamma(U,\mathcal O_X)$.

Let $G(x,y,z)=F(x,y,\overline z)$. We have that
$$
G=G(x,y,0)+\frac{\partial G}{\partial z}(x,y,0)z+\cdots+\frac{1}{(m-1)!}\frac{\partial^{m-1}G}{\partial z^{m-1}}(x,y,0)z^{m-1}+
\frac{1}{m!}\frac{\partial^{m}G}{\partial z^{m}}(x,y,0)z^{m}+\cdots
$$
We have
$$
\frac{\partial^{m-1}G}{\partial z^{m-1}}(x,y,0)=
\frac{\partial^{m-1}F}{\partial \overline z^{m-1}}(x,y,\phi(x,y))=0
$$
and
$$
\frac{\partial^{m}G}{\partial z^{m}}(x,y,0)=
\frac{\partial^{m}F}{\partial \overline z^{m}}(x,y,\phi(x,y))
$$
is a unit in $\hat{\mathcal O}_{X,p}$.
Thus we have the desired form (\ref{eq1}), but we must still show that $a_m\ne 0$ or $a_{m-1}\ne 0$.
If $a_i(x,y)=0$ for $i=m$ and $i=m-1$, we have that $z^2\mid F$ in $\hat{\mathcal O}_{X,p}$, since $m\ge 2$.
This implies that the ideal of $2\times 2$ minors
$$
I_2\left(\begin{array}{lll}
\frac{\partial u}{\partial x}& \frac{\partial u}{\partial y} &\frac{\partial u}{\partial z}\\
\frac{\partial v}{\partial x}& \frac{\partial v}{\partial y} &\frac{\partial v}{\partial z}
\end{array}\right)
\subset (z),
$$
which implies that $z=0$ is a component of $D$ which is impossible. Thus either $a_{m-1}\ne 0$ or $a_m\ne 0$.

Suppose that $C$ is a curve in $\mbox{Sing}_1(X)$ (containing a 1-point) and $p\in C$ is a general point. Let
$r=\sigma_D(p)$. Set $m=r+1$. Let $x,y,\overline z$ be permissible parameters at $p$ with $y,\overline z\in\mathcal O_{X,p}$, which are uniformizing parameters on an \'etale cover $U$ of an affine neighborhood of $p$ such that $x=\overline z=0$ are local equations of $C$ and we have a form (\ref{1-point}) at $p$ with
\begin{equation}\label{eqLF1}
F=\tau \overline z^m+a_1(x,y)\overline z^{m-1}+\cdots+a_m(x,y).
\end{equation}
For $\alpha$ in a Zariski open subset of $\mathfrak k$, $x,\overline y=y-\alpha, \overline z$ are permissible parameters at a point $q\in C\cap U$.
For most points $q$ on the curve $C\cap U$,
we have that $a_i(x,y)=x^{r_i}\overline a_i(x,y)$ where $\overline a_i(x,y)$ is a unit or zero for $1\le i\le m-1$ in $\hat{\mathcal O}_{X,q}$. Since $\sigma_D(p)=r$ at this point, we have that $1\le r_i$ for all $i$. We further have that 
if $a_m\ne 0$, then $a_m=x^{r_m}a'$ where $a'=f(y)+x\Omega$ where $f(y)$ is non constant.
Thus 
$$
0\ne \frac{\partial a_m}{\partial y}(0,y)=\frac{\partial F}{\partial y}(0,y,0).
$$
After possibly replacing $U$ with a smaller neighborhood of $p$, we have
$$
\frac{\partial F}{\partial y}=\frac{1}{x^b}\frac{\partial v}{\partial y}\in \Gamma(U,\mathcal O_X).
$$
Thus $\frac{\partial a_m}{\partial y}(0,\alpha)\ne 0$ for most $\alpha\in \mathfrak k$.
Since $r>0$, we have that $r_m>0$, and thus $r_i>0$ for all $i$ in (\ref{eqLF1}). We have
$$
\frac{\partial^{m-1}F}{\partial \overline z^{m-1}}=\xi \overline z+a_1(x,y),
$$
where $\xi$ is a unit series. Comparing the above equation with (\ref{eqLF2}), we observe that $\phi(x,y)$ is a unit series in $x$ and $y$ times $a_1(x,y)$. Thus $x$ divides $\phi(x,y)$. Setting $z=\overline z-\phi(x,y)$, we obtain an expression (\ref{eq1}) such that $x$ divides $a_i$ for all $i$. Now argue as in the analysis of (\ref{eqLF1}), after substituting $z=\overline z-\phi(x,y)$, to conclude that 
there is an expression (\ref{eq1}), where (\ref{eq1+}) holds at most points $q\in C\cap U$. Thus a form (\ref{eq1}) and (\ref{eq1+}) holds at all but
finitely many 1-points of $X$.
 
\end{proof}

\begin{Lemma}\label{LemmaB} Suppose that $X$ is 2-prepared,  $C$ is a curve in $\mbox{Sing}_{1}(X)$ 
containing a 1-point and
$p$ is a general point of $C$. Let $m=\sigma_D(p)+1$. Suppose that $\tilde x, y\in \mathcal O_{X,p}$ are such that
$\tilde x=0$ is a local equation of $D$ at $p$ and the germ $\tilde x=y=0$ intersects $C$ transversally at $p$.
Then there exists an \'etale cover $U$ of an affine neighborhood of $p$ and $z\in \Gamma(U,\mathcal O_X)$ such that $\tilde x, y,  z$ give a form (\ref{eq1}) at $p$.
\end{Lemma}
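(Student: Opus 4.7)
My plan is to replicate the argument of Lemma \ref{localform}, but starting from the given $\tilde x$ and $y$ rather than choosing them freely; the transversality hypothesis is precisely what makes this adaptation possible.

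First, I pick any $\overline z_0 \in \mathcal O_{X,p}$ extending $\tilde x, y$ to a regular system of parameters (possible because transversality forces $d\tilde x, dy$ to be linearly independent in $\mathfrak m_p/\mathfrak m_p^2$). In the completion $\hat{\mathcal O}_{E,p} = \mathfrak k[[y, \overline z_0]]$ of the surface $E = \{\tilde x = 0\}$, the transversality of the curves $\{\tilde x = y = 0\}$ and $C$ at $p$ means the tangent vector of $C$ has a nonzero $\partial/\partial y$ component, and hence the principal ideal cutting out $C$ in $\hat{\mathcal O}_{E,p}$ is generated by some element of the form $\overline z_0 - \phi(y)$ with $\phi(0) = 0$. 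Weierstrass preparation for Henselian local rings (Proposition 6.1 of \cite{R}), applied on an étale cover $U_1$, lets me replace $\phi$ by an algebraic approximation and set $\overline z := \overline z_0 - \phi(y) \in \Gamma(U_1, \mathcal O_X)$, so that $C$ is cut out by $\tilde x = \overline z = 0$ on $U_1$.

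Next I pass to a further étale cover on which a unit $\gamma \in \hat{\mathcal O}_{X,p}$ satisfying $u = (\gamma \tilde x)^a$ becomes algebraic; this produces permissible parameters $x := \gamma \tilde x,\ y,\ \overline z$ and a form (\ref{1-point}) $v = P(x) + x^b F$. Localizing (\ref{li1}) at the generic point of $C$ (a DVR with uniformizer $\overline z$), together with the condition $\sigma_D = m-1$ at that point, forces $F(0, y, \overline z) = \overline z^m H(y, \overline z)$ with $\overline z \nmid H$; and since $p$ is a general point of $C$, the $(y, \overline z)$-adic order of $F(0, y, \overline z)$ at $p$ equals $m$, which forces $H$ to be a unit. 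In particular $\mbox{ord}(F(0, 0, \overline z)) = m$ in $\overline z$.

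From this point the argument of Lemma \ref{localform} applies without change: the formal implicit function theorem gives $\frac{\partial^{m-1} F}{\partial \overline z^{m-1}} = \omega(\overline z - \phi_1(x, y))$ with $\omega$ a unit and $\phi_1(0,0) = 0$; a second application of Henselian Weierstrass makes $\phi_1$ algebraic on an étale cover $U$ refining $U_1$; and $z := \overline z - \phi_1(x, y) \in \Gamma(U, \mathcal O_X)$ together with $x, y$ yields the desired form (\ref{eq1}), with the non-vanishing of $a_{m-1}$ or $a_m$ following from the standard argument that $z^2 \mid F$ would force $z = 0$ to be a component of $D$. The main (though not substantial) obstacle is bookkeeping for the three successive étale covers needed to make $\gamma$, $\phi$, and $\phi_1$ algebraic and to verify that a common refinement supports all of them simultaneously; this is routine via Henselian Weierstrass, exactly as in Lemma \ref{localform}.
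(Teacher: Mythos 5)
Your proposal is correct and is essentially the paper's own argument: choose $\overline z$ so that $\tilde x=\overline z=0$ is a local equation of $C$ with $\tilde x,y,\overline z$ a regular system of parameters (the paper does this directly in $\mathcal O_{X,p}$, taking $\overline z$ with $I_{C,p}=(\tilde x,\overline z)$ — the transversality hypothesis is exactly what makes $\tilde x,y,\overline z$ regular parameters, so no completion or Weierstrass step is needed at that stage), deduce $\mbox{ord}(F(0,0,\overline z))=m$ from the fact that $\sigma_D$ equals $m-1$ along $C$ near $p$ (the paper phrases this through the radical of the ideal of partials being $(x,\overline z)$, you through the order at the generic point of $C$; these are the same computation, and note the generality of $p$ is what gives the generic value $m-1$, while $\mbox{ord}\,F(0,y,\overline z)=m$ at $p$ is just the definition of $m$), and then invoke the proof of Lemma \ref{localform}. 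The one point to repair is the phrase ``replace $\phi$ by an algebraic approximation \dots so that $C$ is cut out by $\tilde x=\overline z=0$'': you need $\phi$ itself to become regular on the \'etale cover, not an approximation, since a series agreeing with $\phi$ only modulo $(y^N)$ differs from it by a unit at the generic point of $C$ (as $y\notin I_C$), so the modified $\overline z$ would not vanish along $C$ and your DVR computation would collapse. This is easily fixed, because $\phi$ is in fact algebraic — it is the simple root, produced by Hensel's lemma as in the paper's use of Proposition 6.1 of \cite{R}, of a local generator of the ideal of $C$ on $D$ — or one can avoid the issue entirely by making the paper's direct choice of $\overline z\in\mathcal O_{X,p}$.
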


\begin{proof} There exists $\overline z\in \mathcal O_{X,p}$ such that $\tilde x, y,\overline z$ are regular parameters in $\mathcal O_{X,p}$ and $x=\overline z=0$ is a local equation of $C$ at $p$. There exists a unit
$\gamma\in \hat{\mathcal O}_{X,p}$ such that $x=\gamma\tilde x, y, \overline z$ are permissible parameters
at $p$. We have an expression of the form (\ref{1-point}),
$$
u=x^a,
v=P(x)+x^bF
$$
at $p$. Write $F=f(y,\overline z)+x\Omega$ in $\hat{\mathcal O}_{X,p}$. Let $I$ be the ideal in 
$\hat{\mathcal O}_{X,p}$ generated by $x$ and 
$$
\{\frac{\partial^{i+j}f}{\partial y^i\partial \overline z^j}\mid 1\le i+j\le m-1\}.
$$
The radical of $I$ is the ideal $(x,\overline z)$, as $x=\overline z=0$ is a local equation of
$\mbox{Sing}_{m-1}(X)$ at $p$. Thus
$\overline z$ divides $\frac{\partial^{i+j}}{\partial y^i\partial \overline z^j}$ for $1\le i+j\le m-1$ (with $m\ge 2$).
Expanding
$$
f=\sum_{i=0}^{\infty}b_i(y)\overline z^i
$$
(where $b_0(0)=0$) we see that $\frac{\partial b_0}{\partial y}=0$ (so that $b_0(y)=0$)
and $b_i(y)=0$ for $1\le i\le m-1$. Thus $\overline z^m$ divides $f(y,\overline z)$. Since
$\sigma_D(p)=m-1$, we have that $f=\tau\overline z^m$ where $\tau$ is a unit series. Thus $x,y,\overline z$ 
gives a form (\ref{1-point}) with $\mbox{ord}(F(0,0,\overline z))=m$. Now the proof of Lemma \ref{localform}
gives the desired conclusion.
\end{proof}

Let $\omega(m,r_2,\ldots, r_{m-1})$ be a function which associates a positive integer to a positive integer $m$, natural numbers $r_2,\ldots, r_{m-2}$ and a positive integer $r_{m-1}$. We will give a precise form of $\omega$ after Theorem \ref{1-pointres}.

\begin{Definition}\label{3-prep} $X$ is 3-prepared (with respect to $f:X\rightarrow S$) at a point $p\in D$  if $\sigma_D(p)=0$ or if $\sigma_D(p)>0$, $f$ is 2-prepared
with respect to  $D$ at p and there are permissible parameters $x,y,z$ at $p$  such that
$x,y,z$ are uniformizing parameters on an \'etale cover of an affine neighborhood of $p$ and 
 we have one of the following forms, with $m=\sigma_D(p)+1$:
\begin{enumerate}
\item[1)] $p$ is a 2-point,  and we have an expression (\ref{2-point}) with
\begin{equation}\label{eq2}
F=\tau_0 z^m+ \tau_2x^{r_2}y^{s_2}z^{m-2}+\cdots+ \tau_{m-1}x^{r_{m-1}}y^{s_{m-1}}z+\tau_mx^{r_m}y^{s_m}
\end{equation}
where $\tau_0\in \hat{\mathcal O}_{X,p}$ is a  unit, $\tau_i\in \hat{\mathcal O}_{X,p}$ are units (or zero), $r_i+s_i>0$  whenever $\tau_i\ne 0$ and
$(r_m+c)b-(s_m+d)a\ne 0$. Further, $\tau_{m-1}\ne 0$ or $\tau_m\ne 0$.
\item[2)] $p$ is a 1-point, and we have an expression (\ref{1-point}) with 
\begin{equation}\label{eq3}
F=\tau_0 z^m+\tau_2x^{r_2}z^{m-2}+\cdots+\tau_{m-1}x^{r_{m-1}}z+\tau_mx^{r_m}
\end{equation}
where $\tau_0\in \hat{\mathcal O}_{X,p}$  is a  unit, $\tau_i\in \hat{\mathcal O}_{X,p}$  are units (or zero) for $2\le i\le m-1$,
$\tau_m\in \hat{\mathcal O}_{X,p}$
and $\mbox{ord}(\tau_m(0,y,0))=1$ (or $\tau_m=0$). Further, $r_i>0$ if $\tau_i\ne 0$, and $\tau_{m-1}\ne 0$ or $\tau_m\ne 0$. 
\item[3)] $p$ is a 1-point, and we have an expression (\ref{1-point}) with 
\begin{equation}\label{eq4}
F=\tau_0 z^m+\tau_2x^{r_2}z^{m-2}+\cdots+\tau_{m-1}x^{r_{m-1}}z+x^t\Omega
\end{equation}
where $\tau_0\in \hat{\mathcal O}_{X,p}$  is a  unit, $\tau_i\in \hat{\mathcal O}_{X,p}$  are units (or zero) for $2\le i\le m-1$, $\Omega\in \hat{\mathcal O}_{X,p}$, $\tau_{m-1}\ne 0$ and $t>\omega(m,r_2,\ldots, r_{m-1})$
(where we set $r_i=0$ if $\tau_i=0$). Further, $r_i>0$ if $\tau_i\ne 0$.
\end{enumerate}
$X$ is 3-prepared  if $X$ is 3-prepared for all $p\in X$.
\end{Definition}

\begin{Lemma}\label{2-prep1} Suppose that $X$ is 2-prepared with respect to $f:X\rightarrow S$. Then
there exists a sequence of blow ups of 2-curves $\pi_1:X \rightarrow X_1$ such that $X_1$ is 3-prepared with respect to  $f\circ \pi_1$, except possibly at a finite number of 1-points.
\end{Lemma}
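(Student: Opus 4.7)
The plan is to locate where 3-preparation fails on $X$ and repair it by blowing up 2-curves. Since $X$ is 2-prepared, $\mbox{Sing}_1(X)$ contains no 3-points; by Proposition \ref{Step2} the generic point of every 2-curve is prepared, so the 2-points with $\sigma_D>0$ form a finite set. The remaining locus $\{\sigma_D>0\}$ is then a union of isolated 1-points and finitely many curves whose generic points are 1-points. At all but finitely many of these 1-points, Lemma \ref{localform} produces a form (\ref{eq1}) satisfying (\ref{eq1+}); I would observe that if $a_m\ne 0$ this is already form (\ref{eq3}) (with $\tau_m=\overline a_m$), while if $a_m=0$ the last assertion of Lemma \ref{localform} forces $a_{m-1}\ne 0$ and we obtain form (\ref{eq4}) trivially with $\Omega=0$. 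Thus 3-preparation holds at every 1-point outside a finite set, which will be the exceptional set allowed by the statement.

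What remains is to handle each 2-point $p$ with $\sigma_D(p)>0$. By Lemma \ref{localform} there are permissible parameters $x,y,z$ (uniformizing on an \'etale cover of an affine neighborhood of $p$) and a form (\ref{2-point}) with
$$F=\tau z^m+a_2(x,y)z^{m-2}+\cdots+a_m(x,y),\quad a_{m-1}\ne 0 \text{ or } a_m\ne 0.$$
To reach form (\ref{eq2}) it suffices to make each nonzero $a_i(x,y)$ a monomial in $x,y$ times a unit. This is a classical principalization problem for the ideal $(a_2,\ldots,a_m)\subset\mathfrak k[[x,y]]$ in a two-dimensional regular local ring, solved by a finite sequence of blow ups of the maximal ideal. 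Each such point blow up of the $(x,y)$-slice lifts to the blow up of the 2-curve $x=y=0$ in $X$. As the 2-points to be treated are finitely many, these blow ups can be sequenced globally into a single $\pi_1:X_1\to X$ composed of blow ups of 2-curves.

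Finally, I would check 3-preparation at each point of $\pi_1^{-1}(p)$. At a 2-point $p_1\in\pi_1^{-1}(p)$, substituting the monomial blow-up change of variables into $u$ and $v$ converts $F$ into the shape (\ref{eq2}); the nondegeneracy $(r_m+c)b-(s_m+d)a\ne 0$ (when $\tau_m\ne 0$) is either visible from the blow-up combinatorics or enforced by one additional 2-curve blow up so that $x^{r_m}y^{s_m}$ is not a power of $x^ay^b$. At a 1-point $p_1\in\pi_1^{-1}(p)$, Lemma \ref{Torgood} gives $\sigma_D(p_1)\le\sigma_D(p)$, and either $\sigma_D(p_1)=0$ (nothing to check) or the analysis in Lemma \ref{localform} applied to $p_1$ produces form (\ref{eq3}) or (\ref{eq4}), with finitely many residual 1-points absorbed into the exceptional set. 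The main obstacle will be the nondegeneracy condition in (\ref{eq2}) and the bookkeeping of 1-points on the exceptional divisors: one must verify that only finitely many of them fail 3-preparation, which requires tracking how the coefficients $a_i$ transform under the sequence of blow ups.
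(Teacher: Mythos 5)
Your overall route coincides with the paper's: Lemma \ref{2-prep1} is proved there in one line by combining Lemmas \ref{localform}, \ref{Specialize} and \ref{Torgood} with the analysis above 2-points from the proof of Proposition \ref{Step2}, which is exactly your reduction (3-points are automatically prepared, all but finitely many 1-points are handled by (\ref{eq1}) together with (\ref{eq1+}), only the finitely many 2-points with $\sigma_D>0$ need repair, and the repair is by blow ups of 2-curves, i.e.\ monomial substitutions in $x,y$). Two steps of your write-up, however, are not correct as stated and should be repaired.

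First, principalizing the ideal $(a_2,\ldots,a_m)\subset\mathfrak k[[x,y]]$ is strictly weaker than what form (\ref{eq2}) requires: each nonzero $a_i$ must individually become a monomial times a unit (for instance $a_2=x$, $a_3=x(x+y^2)$ generate the principal ideal $(x)$, yet $a_3$ never has the required shape without further blow ups). What does work, and is what the cited method from Proposition \ref{Step2} amounts to, is to principalize by 2-curve blow ups the monomial ideal generated by the terms of each $a_i$ (or of their product): at every 2-point over $p$ the substitution on exponents is unimodular, so exactly one term of $a_i$ attains the minimal monomial and $a_i$ becomes that monomial times a unit there. Note also that full embedded resolution of the curves $a_i=0$ by arbitrary point blow ups of the $(x,y)$-slice is not available to you, since blow ups at points that are not intersections of the exceptional and coordinate curves lift to centers that are not 2-curves of $D$; the toric blow ups suffice precisely because failures at 1-points are allowed, and they remain finite in number by applying Lemma \ref{localform} to $X_1$. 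Second, the condition $(r_m+c)b-(s_m+d)a\ne 0$ is not something to ``enforce by one additional 2-curve blow up'' (a further blow up changes $a,b,c,d$ as well and does not visibly remove a resonance); it is automatic once the coefficients are monomialized. Indeed, the normalization in (\ref{2-point}) that $x^cy^dF$ has no terms which are a power of $x^ay^b$ is inherited under the monomial substitutions at 2-points, because a power of the new $x_1^{a_1}y_1^{b_1}$ pulls back to a power of $x^ay^b$ (using $\gcd(a,b)=1$); and if the last coefficient were $\tau_m x_1^{r_m}y_1^{s_m}$ with $\tau_m$ a unit and the resonance held, then the term $\tau_m(0,0,0)\,x_1^{r_m+c_1}y_1^{s_m+d_1}$ of $x_1^{c_1}y_1^{d_1}F_1$ would be exactly such a forbidden power of $x_1^{a_1}y_1^{b_1}$, a contradiction. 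With these two corrections your plan is the argument the paper intends.
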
 

\begin{proof} The conclusions follow from Lemmas \ref{localform}, \ref{Specialize} and \ref{Torgood}, and the
method of analysis above 2-points of the proof of \ref{Step2}.
\end{proof}

\begin{Lemma}\label{dim2} Suppose that $u,v\in \mathfrak k[[x,y]]$. Let $T_0=\mbox{Spec}(\mathfrak k[[x,y]])$.
Suppose that $u=x^a$ for some $a\in \ZZ_+$, or $u=(x^ay^b)^l$ where $\mbox{gcd}(a,b)=1$ for some $a,b,l\in \ZZ_+$. Let $p\in T_0$ be the maximal ideal $(x,y)$. Suppose that $v\in (x,y)\mathfrak k[[x,y]]$.
Then either  
$v\in \mathfrak k[[x]]$ or there exists a sequence of blow ups of points $\lambda:T_1\rightarrow T_0$ such that for all
$p_1\in \lambda^{-1}(p)$, we have regular parameters $x_1,y_1$ in $\hat{\mathcal O}_{T_1,p_1}$,  regular parameters $\tilde x_1,\tilde y_1$ in $\mathcal O_{T_1,p_1}$ and a unit $\gamma_1\in\hat{\mathcal O}_{T_1,p_1}$ such that  $x_1=\gamma_1\tilde x_1$,   and one of the following holds:
\begin{enumerate}
\item[1)]
$$
u=x_1^{a_1}, v=P(x_1)+x_1^by_1^c
$$
with $c>0$ or
\item[2)] There exists a unit $\gamma_2\in \hat{\mathcal O}_{T_1,p_1}$ such that $y_1=\gamma_2\tilde y_1$ and 
$$
u=(x_1^{a_1}y_1^{b_1})^{\ell_1},
v=P(x_1^{a_1}y_1^{b_1})+x_1^{c_1}y_1^{d_1}
$$
with $\mbox{gcd}(a_1,b_1)=1$ and $a_1d_1-b_1c_1\ne 0$.
\end{enumerate}
\end{Lemma}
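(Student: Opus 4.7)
The strategy is to apply two-dimensional principalization to the ``non-pullback'' part of $v$, and then absorb the resulting unit factor into the parameters. If $u = x^a$ and $v \in \mathfrak{k}[[x]]$, the first alternative holds, so assume this is not the case. Decompose
\begin{equation*}
v = Q + w,
\end{equation*}
where $Q$ consists of all monomials of $v$ that lie in $\mathfrak{k}[[x]]$ (if $u = x^a$) or in $\mathfrak{k}[[x^ay^b]]$ (if $u = (x^ay^b)^l$), and $w$ is the remainder. Then $w \neq 0$ and no monomial of $w$ is proportional to $x^k$ (respectively to $(x^ay^b)^k$) for any $k$.

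Perform a sequence of point blow-ups $\lambda: T_1 \to T_0$ to principalize the ideal $(w) \subset \mathfrak{k}[[x,y]]$, so that at every point $p_1 \in \lambda^{-1}(p)$ the pullback of $w$ becomes a monomial $x_1^{c_1}y_1^{d_1}$ times a unit $\eta \in \hat{\mathcal{O}}_{T_1,p_1}$ in suitable regular parameters $x_1, y_1$. This is classical in dimension two and follows from a Newton-polygon argument: blow up the closed point, observe that the convex hull of the support of $w$ undergoes a shear in each chart, and verify that a lex invariant such as (multiplicity of $w$, number of vertices of the Newton polygon) strictly decreases. Track $u$ through the same blow-ups: $u$ remains a monomial in the new parameters, giving either $u = x_1^{a_1}$ at a 1-point or $u = (x_1^{a_1}y_1^{b_1})^{\ell_1}$ with $\gcd(a_1,b_1) = 1$ at a 2-point. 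Simultaneously, $Q$ pulls back to a power series in $x_1$ or in $x_1^{a_1}y_1^{b_1}$, yielding the series $P$ in the two stated conclusions.

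It remains to verify the non-degeneracy of the remaining monomial and to absorb the unit $\eta$. Since at each blow-up a single monomial in $(x,y)$ pulls back to a single monomial in $(x_1,y_1)$, and no monomial of $w$ is proportional to a power of the $u$-monomial, the same property persists after blow-up: in particular $d_1 > 0$ in case (1) and $a_1d_1 - b_1c_1 \ne 0$ in case (2). For the unit absorption, in case (1) replace $y_1$ by $y_1\eta^{1/d_1}$; this is valid in characteristic zero, preserves $u = x_1^{a_1}$ (which does not involve $y_1$), and yields $w = x_1^{c_1}y_1^{d_1}$ exactly. In case (2), set
\begin{equation*}
\alpha = \eta^{-b_1/(a_1d_1 - b_1c_1)}, \qquad \beta = \eta^{a_1/(a_1d_1 - b_1c_1)},
\end{equation*}
which are well-defined units in $\hat{\mathcal{O}}_{T_1,p_1}$ and satisfy $\alpha^{a_1}\beta^{b_1} = 1$ and $\alpha^{c_1}\beta^{d_1} = \eta$ by the determinant condition. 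The substitution $(x_1,y_1) \mapsto (x_1\alpha, y_1\beta)$ then preserves $u$ and turns $w$ into the monomial $x_1^{c_1}y_1^{d_1}$.

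The main obstacle is the Newton-polygon principalization of $w$ together with the termination argument; both are classical, but the precise choice of invariant and the compatibility with the monomial structure of $u$ at each step require careful bookkeeping.
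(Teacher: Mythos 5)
There is a genuine gap, and it is located exactly where you dismiss it as bookkeeping: the claim that after monomializing $(w)$ one automatically has $d_1>0$ in case 1) (resp.\ $a_1d_1-b_1c_1\ne 0$ in case 2)), ``because no monomial of $w$ is proportional to a power of the $u$-monomial and this persists,'' is false. Under $x=x_1$, $y=x_1(y_1+\alpha)$ with $\alpha\ne 0$ (a 1-point of the exceptional divisor), a monomial $x^iy^j$ with $j>0$ pulls back to a \emph{unit times the pure power} $x_1^{i+j}$; so at all but finitely many points of each exceptional curve the monomialized form of $w$ is $\eta\,x_1^{c_1}$ with $d_1=0$, your unit-absorption step is unavailable, and the decomposition $v=Q+w$ made in the original coordinates is no longer ``series in $x_1$ plus remainder'': part of $\eta$ is a series in $x_1$ alone, and the true remainder $x_1^{c_1}(\eta-\eta(x_1,0))$ need not be a monomial times a unit. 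This is not repairable by rechoosing $P$ or the second coordinate: the conclusion can actually fail on your $T_1$. Take $u=x$ and $v=y^3-3xy^2+3x^2y+x^3y=y\,(y^2-3xy+3x^2+x^3)$. Here $v$ has no monomials in $\mathfrak k[[x]]$, so $w=v$, and a single blow up of the origin already makes $w$ (and $uw$) a monomial times a unit in suitable regular parameters at \emph{every} point of the fiber, so your procedure stops there. At the point $p_1$ given by $x=x_1$, $y=x_1(y_1'+1)$ one has $u=x_1$ and
$$
\frac{\partial v}{\partial y_1'}=x_1\frac{\partial v}{\partial y}=x_1\left(3(y-x)^2+x^3\right)=x_1^{3}\left(3y_1'^{\,2}+x_1\right),
$$
whose divisor is $3[x_1{=}0]$ plus one reduced smooth curve \emph{tangent} to $x_1=0$. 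If form 1) held at $p_1$, then $u=x_1^{a_1}$ forces $x_1=x$, and writing $v=P(x_1)+x_1^by_1''^{\,c}$ gives $\partial v/\partial y_1'=(\mbox{unit})\,x_1^by_1''^{\,c-1}$ with $x_1,y_1''$ a regular system of parameters; so the only possible non-exceptional component of this divisor is smooth, transverse to $x_1=0$, forcing a contradiction ($c=1$ leaves no room for the extra component, $c\ge 2$ forces $y_1''$ tangent to $x_1$). Form 2) is impossible since $u=x_1$ is irreducible. Hence neither conclusion holds at a point of your $T_1$.

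The underlying problem is that you monomialized the wrong object. The paper does not touch $v-Q$; it monomializes the Jacobian locus, making $uJ=0$ a SNC divisor where $J=u_xv_y-u_yv_x$ (for $u=x^a$ this is, up to a unit and a power of $x$, precisely $\partial v/\partial y$ — in the example above the curve $3(y-x)^2+x^3=0$, whose resolution requires further blow ups over $p_1$), and then recovers the shape of $v$ by integrating the monomialized derivative; the series $P$ and the exact residual monomial come out of that integration, not out of a decomposition performed before blowing up. Any correct repair of your argument would have to control $\partial v/\partial y$ (or $dv\wedge du$) rather than $w$, which is essentially the paper's proof.
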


\begin{proof} Let
$$
J=\mbox{Det}\left(\begin{array}{ll}
\frac{\partial u}{\partial x}&\frac{\partial u}{\partial y}\\
\frac{\partial v}{\partial x}&\frac{\partial v}{\partial y}
\end{array}\right).
$$
First suppose that $J=0$. Expand $v=\sum\gamma_{ij}x^iy^j$ with $\gamma_{ij}\in \mathfrak k$. If $u=x^a$, then
$\sum j\gamma_{ij}x^iy^{j-1}=0$ implies $\gamma_{ij}=0$ if $j>0$. Thus $v=P(x)\in \mathfrak k[[x]]$.
If $u=(x^ay^b)^l$, then
$$
0=J=lx^{la-1}y^{lb-1}(\sum_{i,j}(ja-ib)\gamma_{ij}x^iy^j)
$$
implies $\gamma_{ij}=0$ if $ja-ib\ne 0$, which implies that $v\in \mathfrak k[[x^ay^b]]$.

Now suppose that $J\ne 0$. Let $E$ be the divisor $uJ=0$ on $T_0$. There exists a sequence of blow ups of points $\lambda:T_1\rightarrow T_0$ such that $\lambda^{-1}(E)$ is a SNC divisor on $T_1$. Suppose that $p_1\in \lambda^{-1}(p)$. There exist regular parameters $\tilde x_1,\tilde y_1$ in $\hat{\mathcal O}_{T_1,p_1}$ such that if 
$$
J_1=\mbox{Det}\left(\begin{array}{ll}
\frac{\partial u}{\partial \tilde x_1}&\frac{\partial u}{\partial \tilde y_1}\\
\frac{\partial v}{\partial \tilde x_1}&\frac{\partial v}{\partial \tilde y_1}
\end{array}\right),
$$
then
\begin{equation}\label{eq91}
u=\tilde x_1^{a_1},\,\,J_1=\delta\tilde x_1^{b_1}\tilde y_1^{c_1}
\end{equation}
where $a_1>0$ and $\delta$ is a unit in $\hat{\mathcal O}_{T_1,p_1}$, or
\begin{equation}\label{eq92}
u=(\tilde x_1^{a_1}\tilde y_1^{b_1})^{l_1},\,\,J_1=\delta\tilde x_1^{c_1}\tilde y_1^{d_1}
\end{equation}
where $a_1,b_1>0$, $\mbox{gcd}(a_1,b_1)=1$ and $\delta$ is a unit in $\hat{\mathcal O}_{T_1,p_1}$.
Expand $v=\sum\gamma_{ij}\tilde x_1^i\tilde y_1^j$ with $\gamma_{ij}\in \mathfrak k$.

First suppose (\ref{eq91}) holds. Then
$$
a_1x_1^{a_1-1}\left(\sum_{i,j}j\gamma_{ij}\tilde x_1^i\tilde y_1^{j-1}\right)=\delta \tilde x_1^{b_1}\tilde y_1^{c_1}.
$$
Thus $v=P(\tilde x_1)+\epsilon \tilde x_1^e\tilde y_1^f$ where $P(\tilde x_1)\in \mathfrak k[[\tilde x_1]]$,
$e=b_1-a_1+a$, $f=c_1+1$ and $\epsilon$ is a unit series. Since $f>0$, we can make a formal change of variables, multiplying $\tilde x_1$ by an appropriate unit series to get the form 1) of the conclusions of the lemma.

Now suppose that (\ref{eq92}) holds. Then
$$
\tilde x_1^{a_1l_1-1}\tilde y_1^{b_1l_1-1}\left(\sum_{ij}(a_1l_1j-b_1l_1i)\gamma_{ij}\tilde x_1^i\tilde y_1^j\right)=\delta\tilde x_1^{c_1}\tilde y_1^{d_1}.
$$
Thus $v=P(\tilde x_1^{a_1}\tilde y_1^{b_1})+\epsilon \tilde x_1^e\tilde y_1^f$, where $P$ is a series in
$\tilde x_1^{a_1}\tilde y_1^{b_1}$, $\epsilon$ is a unit series, $e=c_1+1-a_1l_1$, $f=d_1+1-b_1l_1$.
Since $a_1l_1f-b_1l_1e\ne 0$, we can make a formal change of variables to reach 2) of the conclusions of the lemma.
\end{proof}

\begin{Lemma}\label{pointprep}
Suppose that $X$ is 2-prepared with respect to  $f:X\rightarrow S$. Suppose that $p\in D$ is a 1-point  with  $m=\sigma_D(p)+1>1$.
Let $u,v$ be permissible parameters for $f(p)$ and $x,y,z$ be permissible parameters for $D$ at $p$ such that a form (\ref{eq1}) holds at $p$. 
Let $U$ be an \'etale cover of an affine neighborhood of $p$ such that $x,y,z$ are uniformizing parameters on $U$.
Let $C$ be the curve in $U$ which has local equations $x=y=0$ at $p$.

Let $T_0=\mbox{Spec}(\mathfrak k[x,y])$, $\Lambda_0:U\rightarrow T_0$. Then there exists a sequence of quadratic transforms $T_1\rightarrow T_0$ such that if $U_1=U\times_{T_0}T_1$ and 
$\psi_1:U_1\rightarrow U$ is the induced sequence of blow ups of sections over
$C$, $\Lambda_1:U_1\rightarrow T_1$ is the projection,
  then $U_1$ is 2-prepared with respect to  $f\circ \psi_1$ at all $p_1\in \psi_1^{-1}(p)$. Further, for every point $p_1\in \psi_1^{-1}(p)$, there exist regular parameters $x_1,y_1$ in 
$\hat{\mathcal O}_{T_1,\Lambda_1(p_1)}$ such that $x_1,y_1,z$ are  permissible parameters at $p_1$, and
there exist regular 
parameters $\tilde x_1,\tilde y_1$ in $\mathcal O_{T_1,\Lambda_1(p_1)}$ 
such that if $p_1$ is a 1-point, 
$x_1=\alpha(\tilde x_1,\tilde y_1)\tilde x_1$ where $\alpha(\tilde x_1,\tilde y_1)\in \hat{\mathcal O}_{T_1,\Lambda_1(p_1)}$ is a unit series and $y_1=\beta(\tilde x_1,\tilde y_1)$ with $\beta(\tilde x_1,\tilde y_1)\in \hat{\mathcal O}_{T_1,\Lambda_1(p_1)}$,
and if $p_1$ is a 2-point, then $x_1=\alpha(\tilde x_1,\tilde y_1)\tilde x_1$  and $y_1=\beta(\tilde x_1,\tilde y_1)\tilde y_1$, where $\alpha(\tilde x_1,\tilde y_1), \beta(\tilde x_1,\tilde y_1)\in \hat{\mathcal O}_{T_1,\Lambda_1(p_1)}$ are unit series.
We have one of the following forms:
\begin{enumerate}
\item[1)] $p_1$ is a 2-point,  and we have an expression (\ref{2-point}) with
\begin{equation}\label{eq2'}
F=\tau z^m+ \overline a_2(x_1,y_1)x_1^{r_2}y_1^{s_2}z^{m-2}+\cdots+\overline a_{m-1}(x_1,y_1)x_1^{r_{m-1}}y_1^{s_{m-1}}z+\overline a_m x_1^{r_m}y_1^{s_m}
\end{equation}
where  $\tau\in \hat{\mathcal O}_{U_1,p_1}$ is a unit, $\overline a_i(x_1,y_1)\in \mathfrak k[[x_1,y_1]]$ are units (or zero) for $2\le i\le m-1$, $\overline a_m=0$ or 1 and if $\overline a_m=0$, then $\overline a_{m-1}\ne 0$.
Further, $r_i+s_i>0$ whenever $\overline a_i\ne 0$ and $a (r_m+c)b-(s_m+d)a\ne 0$. 
\item[2)] $p_1$ is a 1-point, and we have an expression (\ref{1-point}) with 
\begin{equation}\label{eq3'}
F=\tau z^m+\overline a_2(x_1,y_1)x_1^{r_2}z^{m-2}+\cdots+\overline a_{m-1}(x_1,y_1)x_1^{r_{m-1}}z+x_1^{r_m}y_1
\end{equation}
where $\tau\in \hat{\mathcal O}_{U_1,p_1}$ is a unit, $\overline a_i(x_1,y_1)\in \mathfrak k[[x_1,y_1]]$ are units (or zero) for $2\le i\le m-1$. Further,  $r_i>0$ (whenever $\overline a_i\ne 0$). 
\item[3)] $p_1$ is a 1-point,  and we have an expression (\ref{1-point}) with
\begin{equation}\label{eq4'}
F=\tau z^m+ \overline a_2(x_1,y_1)x_1^{r_2}z^{m-2}+\cdots+\overline a_{m-1}(x_1,y_1)x_1^{r_{m-1}}z+x_1^ty_1\Omega
\end{equation}
where  $\tau\in \hat{\mathcal O}_{U_1,p_1}$ is a unit, $\overline a_{i}(x_1,y_1)\in \mathfrak k[[x_1,y_1]]$ are units (or zero)  for $2\le i\le m-1$ and $r_i>0$ whenever $\overline a_i\ne 0$. We also have $t>\omega(m,r_2,\ldots, r_{m-1})$. Further,
 $\overline a_{m-1}\ne 0$ and  $\Omega\in \hat{\mathcal O}_{U_1,p_1}$.
\end{enumerate}
\end{Lemma}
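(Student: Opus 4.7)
The strategy is to reduce the problem to monomialization on the surface $T_0=\mathrm{Spec}(\mathfrak k[x,y])$. By Lemma \ref{localform}, the coefficients $a_2,\ldots,a_m$ of $F$ in the expansion (\ref{eq1}) all lie in $\mathfrak k[[x,y]]$, and after shrinking $U$ we may assume $x,y,z\in\Gamma(U,\mathcal O_X)$, so $\Lambda_0:U\to T_0$ is well-defined with $\Lambda_0^{-1}(q_0)=C$ for $q_0=(0,0)$. Any sequence of quadratic transforms of $T_0$ centered over $q_0$ pulls back, via base change, to a sequence of blow ups of sections over $C$ in $U$; since each such center lies on $D$, the blow ups are toroidal with respect to $D$, and by Lemma \ref{Torgood} the morphism $\psi_1$ preserves 1-preparedness and does not raise $\sigma_D$, so 2-preparedness at $p$ is inherited at every $p_1\in\psi_1^{-1}(p)$.

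The first step is to monomialize the coefficients on $T_0$. Let $\mathcal J=(a_2,\ldots,a_m)\subset\hat{\mathcal O}_{T_0,q_0}=\mathfrak k[[x,y]]$. Iterating the analysis underlying Lemma \ref{dim2} (i.e., embedded resolution on the smooth surface $T_0$), one produces a sequence of quadratic transforms $T_1\to T_0$ above $q_0$ such that $\mathcal J\cdot\hat{\mathcal O}_{T_1}$ is locally principal and has SNC support together with the total transform of $xy=0$ at every $q_1\in T_1$ over $q_0$. At each such $q_1$, the analysis supplies algebraic regular parameters $\tilde x_1,\tilde y_1\in\mathcal O_{T_1,q_1}$ and unit series $\alpha,\beta\in\hat{\mathcal O}_{T_1,q_1}$ with $x_1=\alpha\tilde x_1$ (and $y_1=\beta\tilde y_1$ in the 2-point case) in which each non-zero $a_i$ becomes a monomial times a unit, while $u=x^a$ acquires the monomial form needed to make $x_1,y_1,z$ permissible parameters at the corresponding $p_1$.

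Substituting the monomialized coefficients into (\ref{eq1}) produces three cases according to the nature of $p_1$. If $p_1$ is a 2-point, each $a_i$ becomes $\tau_i x_1^{r_i}y_1^{s_i}$ with $\tau_i$ a unit or zero and the resulting expression is exactly (\ref{eq2'}); the non-degeneracy $(r_m+c)b-(s_m+d)a\ne 0$ comes from the unit-determinant transformation matrix supplied by Lemma \ref{dim2}. If $p_1$ is a 1-point lying over a 1-point of $X$ at which $a_m$ has the generic shape (\ref{eq1+}), then monomialization delivers $a_m=x_1^{r_m}y_1\cdot(\text{unit})$; absorbing the unit into $y_1$ (preserving the structure $y_1=\beta\tilde y_1$ with $\beta$ a unit series) yields form (\ref{eq3'}). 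Finally, at the finitely many 1-points of $X$ where (\ref{eq1+}) fails, a further finite sequence of quadratic transforms centered above the offending $q_1$ inflates the $x_1$-order of the residual $a_m$-term beyond $\omega(m,r_2,\ldots,r_{m-1})$, giving form (\ref{eq4'}).

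The main obstacle is coordinating these cases into a single sequence of blow ups of $T_0$: the exponents $r_i$ driving the threshold $\omega(m,r_2,\ldots,r_{m-1})$ are only determined after the initial principalization, so the additional blow ups required for (\ref{eq4'}) must come afterward and be confined to centers lying above the finitely many bad $q_1$, where by Lemma \ref{localform} they do not disturb the forms already achieved at other points. A secondary technical difficulty is the descent from the formal principalization (which a priori lives in $\hat{\mathcal O}_{T_0,q_0}=\mathfrak k[[x,y]]$) to algebraic quadratic transforms of $T_0$, so that $\tilde x_1,\tilde y_1$ are genuine regular parameters on $T_1$ while the unit adjustments $\alpha,\beta$ remain formal; this is handled via Weierstrass preparation exactly as in the proof of Lemma \ref{localform}, after possibly passing to an \'etale cover of an affine neighborhood of $p$.
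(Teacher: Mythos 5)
Your overall strategy (push everything down to $T_0$ and monomialize the coefficients of $F$ by quadratic transforms) is the right starting point and matches the first step of the paper, but the proposal is missing the two ideas that actually produce the non-degeneracy conditions in forms (\ref{eq2'}), (\ref{eq3'}), (\ref{eq4'}). First, the conditions $(r_m+c)b-(s_m+d)a\ne 0$ at 2-points and the exactly-linear term $x_1^{r_m}y_1$ at 1-points are not consequences of principalizing the ideal $(a_2,\ldots,a_m)$. After monomialization you must still renormalize so that (\ref{1-point}) resp.\ (\ref{2-point}) hold, i.e.\ subtract from the constant-in-$z$ term everything that is a power of $x_1$ (resp.\ of $x_1^{a_1}y_1^{b_1}$), and this subtraction can destroy the ``monomial times unit'' shape of the $a_m$-term entirely (e.g.\ when $s_m=0$ at a 1-point, or when the exponent vector of the $a_m$-monomial is proportional to that of $u$ at a 2-point). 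These conditions are statements about the pair $(u,\,x^ba_m)$, not about $a_m$ alone; the paper obtains them by applying Lemma \ref{dim2} to $u$ and $\overline v=x^ba_m$ (resp.\ $x^cy^da_m$), whose proof uses the nonvanishing of the Jacobian determinant of $(u,\overline v)$ and resolution of $uJ=0$. Attributing the determinant condition to ``the unit-determinant transformation matrix'' is a misreading: unimodularity of the coordinate-change matrices is automatic for any composite of quadratic transforms and gives nothing; the relevant determinant is $a_1d_1-b_1c_1$ in the conclusion of Lemma \ref{dim2}, which compares the exponents of $u$ with those of the residual monomial of $\overline v$.

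Second, your treatment of the remaining 1-points leans on the genericity condition (\ref{eq1+}), which is not available: Lemma \ref{localform} gives (\ref{eq1+}) only away from finitely many 1-points of $X$, while Lemma \ref{pointprep} must be proved at an arbitrary fixed 1-point $p$ with $\sigma_D(p)>0$ --- indeed in Proposition \ref{Step3.1} it is applied precisely at the finitely many points where such genericity fails (your case split ``over a 1-point of $X$ at which (\ref{eq1+}) holds / fails'' also conflates points of $X$ with points of $U_1$ over the single point $p$). Consequently nothing in the proposal produces the requirement $\overline a_{m-1}\ne 0$ in form (\ref{eq4'}): simply blowing up more points to raise the $x_1$-order of the residual $a_m$-term can leave coefficients of $z^{m-i}$ that are not units times powers of $x_1$, which is none of the three allowed forms. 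The paper supplies exactly this at the finitely many residual bad 1-points by invoking the hypothesis that $D_X$ contains the non-smooth locus of $f$: localizing the Jacobian ideal at the prime $(y_2,z)$ forces either $\overline a_{m-1}$ a unit with $s_{m-1}=0$, or the $a_m$-term to be $x_2^{r_m}y_2$, and only with this dichotomy does the final chain of point blow ups ($y_2=x_3^ny_3$ with $n\gg 0$) land in (\ref{eq4'}) with $t>\omega(m,r_2,\ldots,r_{m-1})$, or in (\ref{eq3'}). (A smaller point: the blow ups of sections over $C$ are permissible but not toroidal for $D$, so Lemma \ref{Torgood} does not apply as you use it; 2-preparedness of $U_1$ over $p$ has to be read off from the explicit forms obtained.)
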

 
\begin{proof} Let $\overline p=\Lambda_0(p)$.  Let $T=\{i\mid a_i(x,y)\ne 0\mbox{ and }2\le i<m\}$.
There exists a sequence of blow ups $\phi_1:T_1\rightarrow T_0$ of points over $\overline p$ such that
at all points $q\in \psi_1^{-1}(p)$, we have permissible parameters $x_1,y_1,z$ such that
$x_1,y_1$ are regular parameters in $\hat{\mathcal O}_{T_1,\Lambda_1(q)}$ 
 and 
we have that $u$  is a monomial in $x_1$ and $y_1$ times a unit in $\hat{\mathcal O}_{T_1,\Lambda_1(q)}$, where $g=\prod_{i\in T}a_i(x,y)$.

Suppose that $a_m(x,y)\ne 0$. Let $\overline v=x^ba_m(x,y)$ if (\ref{1-point}) holds and $\overline v= x^cy^da_m(x,y)$ if (\ref{2-point}) holds. We have $\overline v\not\in \mathfrak k[[x]]$ (respectively $\overline v\not\in \mathfrak k[[x^ay^b]]$). Then by  Theorem \ref{dim2} applied to $u,\overline v$, we 
have that there exists a further sequence of blow ups $\phi_2:T_2\rightarrow T_1$ of points over $\overline p$ such that at all points $q\in (\psi_1\circ\psi_2)^{-1}(p)$, we have permissible parameters $x_2,y_2,z$ such that $x_2,y_2$ are regular parameters in $\hat{\mathcal O}_{T_2,\Lambda_2(q)}$ 
 such that $u=0$ is a SNC divisor and   either
$$
u=x_2^{\overline a}, \overline v =  \overline P(x_2)+x_2^{\overline b}\overline y_2^{\overline c}
$$
with $\overline c>0$ or 
$$
u=(x_2^{\overline a}y_2^{\overline b})^t, \overline v =  \overline P(x_2^{\overline a}y_2^{\overline b})+x_2^{\overline c}\overline y_2^{\overline d}
$$
where $\overline a\overline d-\overline b\overline c\ne 0$.

If $q$ is a 2-point, we have thus achieved the conclusions of the lemma. Further, there are only finitely many
1-points $q$ above $p$ on $U_2$ where the conclusions of the lemma do not hold. 
At such a 1-point $q$, $F$ has an expression
\begin{equation}\label{eq40}
F=\tau z^m+\overline a_2(x_2,y_2)x_2^{r_2}y_2^{s_2}z^{m-2}+\cdots+\overline a_{m-1}(x_2,y_2)x_2^{r_{m-1}}y_2^{s_{m-1}}z+\overline a_mx_2^{r_m}y_2^{s_m}
\end{equation}
where $\overline a_m=0\mbox{ or }1$, $\overline a_i$ are units (or zero) for $2\le i\le m$.

Let 
$$
J=I_2\left(\begin{array}{lll}
\frac{\partial u}{\partial x_2}&\frac{\partial u}{\partial y_2}&\frac{\partial u}{\partial z}\\
\frac{\partial v}{\partial x_2}&\frac{\partial v}{\partial y_2}&\frac{\partial v}{\partial z}
\end{array}\right)=x^n(\frac{\partial F}{\partial y_2},\frac{\partial F}{\partial z})
$$
for some positive integer $n$. Since $D$ contains the locus where $f$ is not smooth, we have that
the localization $J_{\mathfrak p}=(\hat{\mathcal O}_{U_2,q})_{\mathfrak p}$, where $\mathfrak p$ is the prime ideal $(y_2,z_2)$ in $\hat{\mathcal O}_{U_2,q}$. 

We compute 
$$
\frac{\partial F}{\partial z}=\overline a_{m-1}x_2^{r_{m-1}}y_2^{s_{m-1}}+\Lambda_1z
$$
and
$$
\frac{\partial F}{\partial y_2}=s_m\overline a_my_2^{s_m-1}x_2^{r_m}+\Lambda_2z
$$
for some $\Lambda_1,\Lambda_2\in\hat{\mathcal O}_{U_2,q}$, to see that either
$\overline a_{m-1}\ne 0$ and $s_{m-1}=0$, or $\overline a_m\ne 0$ and $s_m=1$.

Let $q$ be one of these points, and let $\phi_3:T_3\rightarrow T_2$ be the blow up of $\Lambda_2(q)$.
We then have that the conclusions of the lemma hold in the form (\ref{eq2'}) at the 2-point
which has permissible parameters $x_3,y_3,z$ defined by $x_2=x_3y_3$ and $y_2=y_3$.
At a 1-point which has permissible parameters $x_3,y_3,z$ defined by $x_2=x_3, y_2=x_3(y_3+\alpha)$
with $\alpha\ne 0$, we have that  a form (\ref{eq3'}) holds. Thus the only case where we may possibly have not achieved the conclusions of the lemma is at the 1-point which has permissible parameters $x_3,y_3,z$ defined by $x_2=x_3$ and $y_2=x_3y_3$. 
We continue to blow up, so that there is at most one point where the conclusions of the lemma do not hold. This point is a 1-point, which has permissible parameters $x_3,y_3,z$ where 
$x_2=x_3$ and $y_2=x_3^ny_3$ where we can take $n$ as large as we like.  We thus have a form
\begin{equation}\label{eq8}
u=x_3^a,
v=P(x_3)+x_3^bF_3
\end{equation}
with $F_3=\tau z^m+ \overline b_{2}x_3^{r_2}z^{m-2}+\cdots +\overline b_{m-1}x_3^{r_{m-2}}z+x_3^t\Omega$,
where either $\overline b_i(x_3,y_3)$ is a unit or is zero,  $\overline b_{m-1}\ne 0$,
and $t>\omega(m,r_2,\ldots,r_{m-1})$ if $\overline a_{m-1}\ne 0$ and $s_{m-1}=0$ which is of the form of (\ref{eq4'}),
or we have a form (\ref{eq3'}) (after replacing $y_3$ with $y_3$ times a unit series in $x_3$ and $y_3$) if $\overline a_m\ne 0$ and $s_m=1$.

\end{proof}

\begin{Lemma}\label{algpointprep}
Suppose that $X$ is 2-prepared with respect to  $f:X\rightarrow S$. Suppose that $p\in D$ is a 1-point  with  $\sigma_D(p)>0$. Let $m=\sigma_D(p)+1$.
Let $x,y,z$ be permissible parameters for $D$ at $p$ such that a form (\ref{eq1}) holds at $p$. 

Let notation be as in Lemma \ref{pointprep}. For $p_1\in \psi_1^{-1}(p)$ let 
$\overline r(p_1)=m+1+r_m$, if a form (\ref{eq3'}) holds at $p_1$, and
$$
\overline r(p_1)=
\left\{\begin{array}{ll}
\mbox{max}\{m+1+r_{m}, m+1+s_m\}&\mbox{ if }\overline a_{m}=1\\
\mbox{max}\{m+1+r_{m-1}, m+1+s_{m-1}\}&\mbox{ if }\overline a_{m}=0
\end{array}\right.
$$
if a form (\ref{eq2'})holds at $p_1$. 
Let
$\overline r(p_1)= m+1+r_{m-1}$ 
if a form (\ref{eq4'}) holds at $p_1$.

Let 
$r'=\mbox{max}\{\overline r(p_1)\mid p_1
\in \psi_1^{-1}(p)\}$. Let 
\begin{equation}\label{eq21}
r=r(p)=m+1+r'.
\end{equation}

Suppose that $x^*\in \mathcal O_{X,p}$ is such that 
$x=\overline \gamma x^*$ for some unit $\overline \gamma\in \hat{\mathcal O}_{X,p}$ with
$\overline \gamma\equiv 1\mbox { mod }m_p^r\hat{\mathcal O}_{X,p}$.

Let $V$ be an affine neighborhood of $p$ such that $x^*,y\in\Gamma(V,\mathcal O_X)$, and let $C^*$ be the curve in $V$ which has local equations $x^*=y=0$ at $p$.

Let $T^*_0=\mbox{Spec}(\mathfrak k[x^*,y])$. Then there exists a sequence of blow ups of points $T^*_1\rightarrow T^*_0$ above $(x^*,y)$ such that if $V_1=V\times_{T^*_0}T^*_1$ and 
$\psi^*_1:V_1\rightarrow V$ is the induced sequence of blow ups of sections over
$C^*$, $\Lambda_1^*:V_1\rightarrow T^*_1$ is the projection,
  then $V_1$  is 2-prepared at all $p_1^*\in (\psi_1^*)^{-1}(p)$. Further, for every point $p_1^*\in (\psi_1^*)^{-1}(p)$, there exist 
 $\hat x_1,\overline y_1\in\hat{\mathcal O}_{V_1,p_1^*}$ such that $\hat x_1,\overline y_1,z$ are permissible parameters at $p_1^*$ and
 we have one of the following forms:

\begin{enumerate}
\item[1)] $p_1^*$ is a 2-point,  and we have an expression (\ref{2-point}) with
\begin{equation}\label{eq2*}
F=\overline \tau_0 z^m+ \overline \tau_2  \hat x_1^{r_2}\overline y_1^{s_2}z^{m-2}+\cdots+\overline \tau_{m-1} \hat x_1^{r_{m-1}}\overline y_1^{s_{m-1}}z+\overline\tau_m \hat x_1^{r_m}\overline y_1^{s_m}
\end{equation}
where $\overline \tau_0\in \hat{\mathcal O}_{V_1,p_1^*}$ is a unit, $\overline \tau_i\in \hat{\mathcal O}_{V_1,p_1^*}$ are units (or zero) for $0\le i \le m-1$, $\overline\tau_m$ is zero or 1, 
$\overline\tau_{m-1}\ne 0$ if $\overline\tau_m=0$, $r_i+s_i>0$ if $\overline \tau_i\ne 0$, 
and
$$
(r_m+c)b-(s_m+d)a\ne 0.
$$
\item[2)] $p_1^*$ is a 1-point, and we have an expression (\ref{1-point}) with 
\begin{equation}\label{eq3*}
F=\overline \tau_0 z^m+\overline \tau_2 \hat x_1^{r_2}z^{m-2}+\cdots+\overline \tau_{m-1} \hat x_1^{r_{m-1}}z+\overline \tau_m\hat x_1^{r_m}
\end{equation}
where $\overline \tau_0\in \hat{\mathcal O}_{V_1,p_1^*}$ is a  unit, $\overline \tau_i\in \hat{\mathcal O}_{V_1,p_1^*}$ are units (or zero),  and $\mbox{ord}(\overline \tau_m(0,\overline y_1,0)=1$. Further, $r_i>0$ if $\overline \tau_i\ne 0$.
\item[3)] $p_1^*$ is a 1-point, and we have an expression (\ref{1-point}) with 
\begin{equation}\label{eq4*}
F=\overline \tau_0 z^m+\overline \tau_2 \hat x_1^{r_2}z^{m-2}+\cdots+\overline \tau_{m-1} \hat x_1^{r_{m-1}}z+x_1^t\overline\Omega
\end{equation}
where $\overline \tau_0\in \hat{\mathcal O}_{V_1,p_1^*}$ is a  unit, $\overline \tau_i\in \hat{\mathcal O}_{V_1,p_1^*}$ are units (or zero), $\overline\Omega\in \hat{\mathcal O}_{V_1,p_1^*}$, $\overline \tau_{m-1}\ne 0$ and $t>\omega(m,r_2,\ldots,r_{m-1})$. Further, $r_i>0$ if $\overline \tau_i\ne 0$.
\end{enumerate}
\end{Lemma}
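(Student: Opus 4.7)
The plan is to transport the blow-up sequence produced by Lemma \ref{pointprep} from $T_0 = \mbox{Spec}(\mathfrak k[x,y])$ to $T_0^* = \mbox{Spec}(\mathfrak k[x^*,y])$, exploiting that the substitution $x = \overline\gamma x^*$ is congruent to the identity modulo $m_p^r\hat{\mathcal O}_{X,p}$ and that $r$ has been chosen large enough that every resulting perturbation is absorbed into the unit factors or the remainder term. First I will run Lemma \ref{pointprep} with the given parameters $x, y, z$, recording at every $p_1 \in \psi_1^{-1}(p)$ the explicit monomial expressions of $x$ and $y$ in terms of $\tilde x_1, \tilde y_1$ together with the form (\ref{eq2'}), (\ref{eq3'}) or (\ref{eq4'}) that occurs there. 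The blow-up sequence $\phi_1 : T_1 \to T_0$ is determined combinatorially by the Newton data of the $a_i(x,y)$ and by the application of Lemma \ref{dim2}, so I then perform the combinatorially identical sequence of point blow-ups $\phi_1^* : T_1^* \to T_0^*$, with $x^*$ taking the role of $x$, and set $V_1 = V \times_{T_0^*} T_1^*$. This is legitimate because $x^*, y \in \Gamma(V, \mathcal O_X)$ are genuine regular functions on $V$, and because the ideal-theoretic conditions selecting each successive blow-up center depend only on the monomial shape of $u$ and $F$, which is preserved under the substitution $x = \overline\gamma x^*$ as long as $r$ exceeds every exponent entering those conditions.

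Each $p_1^* \in (\psi_1^*)^{-1}(p)$ corresponds combinatorially to a unique $p_1 \in \psi_1^{-1}(p)$, and at $p_1^*$ I obtain formal parameters $x_1^*, y_1^*$ satisfying the analogues of the relations $x^* = \alpha^* \tilde x_1^*$, and $y = \beta^*$ or $y = \beta^* \tilde y_1^*$. Setting $\hat x_1 = x_1^*$ and $\overline y_1 = y_1^*$ (after adjusting by formal unit series in $\hat{\mathcal O}_{V_1, p_1^*}$ as in the final step of Lemma \ref{pointprep}, and renormalizing so that a coefficient that equaled $1$ at $p_1$ is brought back to $1$ at $p_1^*$), I substitute $x = \overline\gamma x^*$ into the form at the corresponding $p_1$ and express $x_1$ in terms of $\hat x_1$: each monomial $x_1^{r_i}$ or $x_1^t$ acquires an extra unit factor of the shape $\overline\gamma^{?}$ coming from the blow-up substitutions. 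Since $\overline\gamma \equiv 1 \pmod{m_p^r \hat{\mathcal O}_{X,p}}$ and every exponent appearing at $p_1$ is bounded, after pushing through the blow-up tree, by $r' = r - (m+1)$, these unit factors can be absorbed into the respective $\overline\tau_i$ without changing the unit-or-zero status of any of them or disturbing the non-degeneracy conditions $\overline\tau_{m-1}\ne 0$ and $(r_m + c)b - (s_m + d)a \ne 0$. The remainder term $x_1^t y_1 \Omega$ of (\ref{eq4'}) becomes $\hat x_1^t \overline\Omega$, with the inequality $t > \omega(m, r_2, \ldots, r_{m-1})$ preserved and the factor $y_1$ together with the extra powers of $\overline\gamma$ swept into $\overline\Omega \in \hat{\mathcal O}_{V_1, p_1^*}$.

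The main obstacle is the quantitative bookkeeping: one must verify that the choice $r = m + 1 + r'$, with $r' = \max \overline r(p_1)$ and $\overline r(p_1)$ defined to be the maximum of the exponents $r_m, s_m$ or $r_{m-1}, s_{m-1}$ shifted by $m+1$ according to which of the forms (\ref{eq2'}), (\ref{eq3'}), (\ref{eq4'}) occurs at $p_1$, actually dominates every exponent in $x_1$ that can appear in any of those forms at every $p_1 \in \psi_1^{-1}(p)$, and that the same bound controls how much the unit $\overline\gamma$ can be distorted after pulling back through the sequence of monoidal transforms. Once this is established, the substitution $x = \overline\gamma x^*$ becomes invisible to the leading-order structure of each form, and the corresponding form (\ref{eq2*}), (\ref{eq3*}) or (\ref{eq4*}) is achieved at $p_1^*$; the required 2-preparation at $p_1^*$ then follows by invoking Lemma \ref{Torgood} for the toroidal steps in the construction and the same argument used in the last part of the proof of Lemma \ref{pointprep} for the remaining (non-toroidal) point blow-ups.
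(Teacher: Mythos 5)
Your overall strategy coincides with the paper's: transport the tower of Lemma \ref{pointprep} across the isomorphism $T_0^*\cong T_0$ given by $x\mapsto x^*$ (the paper simply defines $T_1^*\rightarrow T_0^*$ as this base change, so nothing needs to be said about the algorithm "making the same choices"), match each $p_1^*$ with its counterpart $p_1$, and use $\overline\gamma\equiv 1\ \mbox{mod}\ m_p^r$ with $r=m+1+r'$. The gap is in the central comparison step. You treat the form at $p_1$, which lives in $\hat{\mathcal O}_{U_1,p_1}$ (blow-ups of sections over $C$), as if it could be rewritten at $p_1^*\in V_1$ (blow-ups over the \emph{different} curve $C^*$) by letting "each monomial $x_1^{r_i}$ or $x_1^t$ acquire a unit factor $\overline\gamma^{?}$" which is then absorbed into $\overline\tau_i$. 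That is not what happens and, taken literally, it does not work: $\overline\gamma$ is a unit series in all three variables (including $z$) and the $a_i(x,y)$ are arbitrary series, so $a_i(\overline\gamma x^*,y)$ is not a power of $\overline\gamma$ times $a_i(x^*,y)$, and the two completed local rings are not identified. The only usable input is the additive congruence $a_i(\overline\gamma x^*,y)\equiv a_i(x^*,y)\ \mbox{mod}\ m_p^r\hat{\mathcal O}_{X,p}$, so the whole perturbation enters $F$ as a single error $h\in m_p^r\hat{\mathcal O}_{V_1,p_1^*}$, and the actual content of the lemma — which your proposal defers as "bookkeeping" — is absorbing $h$: one needs $m_p\hat{\mathcal O}_{V_1,p_1^*}\subset(\hat x_1,z)$ at 1-points and $\subset(\hat x_1\overline y_1,z)$ at 2-points, then one splits $h$ by $z$-degree as $z^m\Lambda_0+\sum_{i\ge 1}z^{m-i}\hat x_1^{i+s}\Lambda_i$ (resp.\ with $(\hat x_1\overline y_1)^{i+s}$), $s=r-m$, and distributes the pieces among the $\overline\tau_i$, using the exponent inequalities forced by your choice of $r$. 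A multiplicative "unit-factor" picture gives you no handle on the delicate conditions that must survive, namely $\mbox{ord}(\overline\tau_m(0,\overline y_1,0))=1$ in case (\ref{eq3*}) (every error contribution must still vanish at $\hat x_1=z=0$, which is exactly the inequality $r>m+r_m$), the retention of $\overline\tau_{m-1}\ne 0$ when $\overline a_m=0$ (where one must note $z\mid h$), and the unit status of $\overline\tau_m$ in case (\ref{eq2*}).

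Two further points your plan does not cover. At 2-points the containment $m_p\hat{\mathcal O}_{V_1,p_1^*}\subset(\hat x_1\overline y_1,z)$ fails precisely in the charts (\ref{eq60}) and (\ref{eq61}); there the additive absorption cannot reproduce the monomial shape $\hat x_1^{r_i}\overline y_1^{s_i}\cdot(\mbox{unit})$, and the paper handles these charts by the separate observation that $\hat{\mathcal O}_{V_1,p_1^*}=\hat{\mathcal O}_{U_1,p_1}$ there, so the form (\ref{eq2'}) already is a form (\ref{eq2*}). Second, permissibility of $\hat x_1,\overline y_1,z$ requires $u$ to be \emph{exactly} a monomial in them, not merely up to a unit; this forces the choice $\hat x_1=\overline\gamma^{1/d}\overline x_1$ (extraction of a root of $\overline\gamma$), which is more than the "renormalizing a coefficient back to $1$" you describe. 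With these repairs — the additive congruence analysis, the exceptional 2-point charts, and the root extraction — your outline becomes the paper's proof; your definitions of $\overline r(p_1)$ and $r$ are exactly what make the required exponent inequalities hold.
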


\begin{proof}   The isomorphism
$T_0^*\rightarrow T_0$ obtained by substitution of $x^*$ for $x$
and subsequent base change by the morphism  $T_1\rightarrow T_0$ of Lemma \ref{pointprep}, induces a sequence of
blow ups of points $T_1^*\rightarrow  T_0^*$.
The base change $\psi_1^*:V_1=V\times_{T_0^*}T_1^*\rightarrow V\cong V\times_{ T_0^*}T_0^*$
factors as a sequence of blow ups of sections over $C^*$.
Let $\Lambda_1^*:V_1\rightarrow T_1^*$ be the natural projection.

Let $p_1^* \in (\psi_1^*)^{-1}(p)$, and let $p_1\in \psi_1^{-1}(p)\subset U_1$ be the corresponding point. 
 
  First suppose that $p_1$ has a form (\ref{eq3'}). With the notation of Lemma \ref{pointprep}, we have polynomials $\phi, \psi$ such that
  $$
  x=\phi(\tilde x_1,\tilde y_1),
  y=\psi(\tilde x_1,\tilde y_1)
  $$
  determines the birational extension $\mathcal O_{T_0,p_0}\rightarrow \mathcal O_{T_1,\Lambda_1(p_1)}$,
  and we have a formal change of variables
  $$
  x_1=\alpha(\tilde x_1,\tilde y_1)\tilde  x_1, y_1=\beta(\tilde x_1,\tilde y_1)
  $$
  for some unit series $\alpha$ and series $\beta$. We further have expansions
   $$
   a_i(x,y)=x_1^{r_i}\overline a_i(x_1,y_1)
   $$
   for $2\le i\le m-1$ where $\overline a_i(x_1,y_1)$ are unit series or zero, and
   $$
   a_m(x,y)=x_1^{r_m}y_1.
   $$
   
  We have $x=\overline\gamma x^*$ with $\overline\gamma\equiv 1\mbox{ mod }m_p^r\hat{\mathcal O}_{X,p}$.
   Set $ y^*=y$.  At $p_1^*$, we have regular parameters $x_1^*, y_1^*$ in $\mathcal O_{ T_1^*, \Lambda_1^*(p_1^*)}$ such that 
   $$
    x^*=\phi(x_1^*,y_1^*),
   y^*=\psi(x_1^*,y_1^*),
   $$
   and $x_1^*, y_1^*, \tilde z$ are regular parameters in $\mathcal O_{V_1,\overline p_1^*}$ (recall that $z=\sigma\tilde z$ in Lemma \ref{localform}). 
   We have regular parameters $\overline x_1, \overline y_1, \in \hat{\mathcal O}_{ T_1^*,\Lambda_1^*(p_1^*)}$
   defined by
   $$
   \overline x_1=\alpha(x_1^*,y_1^*)x_1^*,
   \overline y_1=\beta(x_1^*,y_1^*).
   $$
   
   We have $u=x^a=x_1^{a_1}$ where $a_1=ad$ for some $d\in\ZZ_+$. Since $[\alpha(\tilde x_1,\tilde y_1)\tilde x_1]^d=x$, we have that $[\alpha(x_1^*,y_1^*)x_1^*]^d=x^*$.
   Set $\hat x_1=\overline\gamma^{\frac{1}{d}}\overline x_1=\overline\gamma^{\frac{1}{d}}\alpha(x_1^*,y_1^*)x_1^*$.
   We have that $\overline\gamma^{\frac{1}{d}}\alpha(x_1^*,y_1^*)$ is a unit in $\hat{\mathcal O}_{V_1,p_1^*}$, and $x=\hat x_1^d$. Thus $x_1=\hat x_1$ (with an appropriate choice of root $\overline\gamma^{\frac{1}{d}}$). We have $u=\hat x_1^{ad}$, so that $\hat x_1,\overline y_1,z$ are permissible parameters at $p_1^*$.

   For $2\le i\le m-1$, we have
   $$
   a_i(x,y)=a_i(\overline \gamma x^*,y^*)\equiv a_i(x^*,y^*)\mbox{ mod }m_p^r\hat{\mathcal O}_{V,p}
   $$
   and
   $$
   \begin{array}{lll}
   a_i(x^*,y^*)&=&a_i(\phi(x_1^*,y_1^*),\psi(x_1^*,y_1^*))\\
   &=&\overline x_1^{r_i}\overline a_i(\overline x_1,\overline y_1)\\
   &\equiv& x_1^{r_i}\overline a_i(x_1,\overline y_1)\mbox{ mod }m_p^r{\mathcal O}_{V_1,p_1^*}.
   \end{array}
   $$
   We further have

  $$
  a_m(x^*,y^*)\equiv x_1^{r_m}\overline y_1\mbox{ mod }m_p^r\hat{\mathcal O}_{ V_1,p_1^*}.
  $$
  Thus we have expressions
  \begin{equation}\label{eq20}
  \begin{array}{lll}
  u&=&x_1^{da}\\
  v&=& P(x_1^d)+x_1^{bd}P_1(x_1)
  +x_1^{bd}(\overline\tau z^m+x_1^{r_2}\overline a_2(x_1,\overline y_1)z^{m-2}+\cdots +x_1^{r_m} \overline y_1+h)
  \end{array}
  \end{equation}
  where $\overline\tau\in\hat{\mathcal O}_{V_1,p_1^*}$ is a unit series and
  $$
  h\in m_p^r\hat{\mathcal O}_{V_1,p_1^*}\subset (x_1,z)^r.
  $$
  
  Set $s=r-m$, and write
  $$
  \begin{array}{lll}
  h&=&z^m\Lambda_0(x_1,\overline y_1,z)+z^{m-1}x_1^{1+s}\Lambda_{1}(x_1,\overline y_1)+z^{m-2}x_1^{2+s}\Lambda_2(x_1,\overline y_1)
  +\cdots\\
  &&+zx_1^{(m-1)+s}\Lambda_{m-1}(x_1,\overline y_1)+x_1^{m+s}\Lambda_m(x_1,\overline y_1)
  \end{array}
  $$
  with $\Lambda_0\in m_{p_1^*}\hat{\mathcal O}_{ V_1,p_1^*}$ and
  $\Lambda_i\in \mathfrak k[[x_1,\overline y_1]]$ for $1\le i\le m$.
  
  Substituting into (\ref{eq20}), we obtain an expression
  $$
  \begin{array}{lll}
  u&=&x_1^{da}\\
  v&=& P(x_1^d)+x_1^{bd}P_1(x_1)
  +x_1^{bd}(\overline \tau_0z^m+x_1^{r_2}\overline\tau_2z^{m-2}+\cdots+x_1^{r_{m-1}}\overline \tau_{m-1}z+x_1^{r_m}\overline \tau_m)
  \end{array}
  $$
  where $\overline\tau_0\in\hat{\mathcal O}_{V_1,p_1^*}$ is a  unit, $\overline\tau_i\in\hat{\mathcal O}_{V_1,p_1^*}$ are units (or zero),
  for $1\le i\le m-1$ and $\overline \tau_m\in \mathfrak k[[x_1,\overline y_1]]$ with $\mbox{ord}(\overline\tau_m(0,\overline y_1))=1$.

  We have $\overline \tau_0=\overline\tau+\Lambda_0$,
  $\tau_i=\overline a_i(x_1,\overline y_1)$ for $2\le i\le m-1$, and
  $$
  \overline\tau_m=\overline y_1+z^{m-1}x_1^{1+s-r_m}\Lambda_{1}(x_1,\overline y_1)+\cdots
  +x_1^{m+s-r_m}\Lambda_m(x_1,\overline y_1)).
  $$
  We thus have the desired form
  (\ref{eq3*}).
  
  In the case when $p_1$ has a form (\ref{eq4'}), a similar argument to the analysis of (\ref{eq3'}) shows that $p_1^*$ has a form  (\ref{eq4*}).
  
  Now suppose that $p_1$ has a form (\ref{eq2'}). We then have
  \begin{equation}\label{eq62}
  m_p\mathcal O_{U_1,p_1}\subset (x_1y_1,z)\mathcal O_{U_1,p_1},
  \end{equation}
  unless there exist regular parameters $x_1',y_1'\in \mathcal O_{T_1,\Lambda_1(p_1)}$ such that
  $x_1',y_1',z$ are regular parameters in $\mathcal O_{U_1,p_1}$ and
  \begin{equation}\label{eq60}
  x=x_1', y=(x_1')^ny_1'
  \end{equation}
  or
  \begin{equation}\label{eq61}
  x=x_1'(y_1')^n, y=y_1'
  \end{equation}
  for some $n\in\NN$. If (\ref{eq60}) or (\ref{eq61}) holds, then $\hat{\mathcal O}_{V_1,p_1^*}=\hat{\mathcal O}_{U_1,p_1}$, and (taking  $\hat x_1=x_1$, $\overline y_1=y_1$) we have that a form (\ref{eq2*}) holds at $p_1^*$. We may thus assume that (\ref{eq62}) holds.

  With the notation of Lemma \ref{pointprep}, we have polynomials $\phi, \psi$ such that
  $$
  x=\phi(\tilde x_1,\tilde y_1),
  y=\psi(\tilde x_1,\tilde y_1)
  $$
  determines the birational extension $\mathcal O_{T_0,p_0}\rightarrow \mathcal O_{T_1,\Lambda_1(p_1)}$,
  and we have a formal change of variables
  $$
  x_1=\alpha(\tilde x_1,\tilde y_1)\tilde  x_1, y_1=\beta(\tilde x_1,\tilde y_1)\tilde y_1
  $$
  for some unit series $\alpha$ and $\beta$. We further have expansions
   $$
   a_i(x,y)=x_1^{r_i}y_1^{s_i}\overline a_i(x_1,y_1)
   $$
   for $2\le i\le m-1$ where $\overline a_i(x_1,y_1)$ are unit series or zero, and
   $$
   a_m(x,y)=x_1^{r_m}y_1^{s_m}\overline a_m,
   $$
   where $\overline a_m=0$ or 1.
  We have $x=\overline\gamma x^*$ with $\overline\gamma\equiv 1\mbox{ mod }m_p^r\hat{\mathcal O}_{X,p}$.
   Set $ y^*=y$.  At $p_1^*$, we have regular parameters $x_1^*, y_1^*$ in $\mathcal O_{ T_1^*, \Lambda_1^*(p_1^*)}$ such that 
   $$
    x^*=\phi(x_1^*,y_1^*),
   y^*=\psi(x_1^*,y_1^*),
   $$
   and $x_1^*, y_1^*, \tilde z$ are regular parameters in $\mathcal O_{V_1,\overline p_1^*}$ (recall that $z=\sigma\tilde z$ in Lemma \ref{localform}). 
   We have regular parameters $\overline x_1, \overline y_1, \in \hat{\mathcal O}_{ T_1^*,\Lambda_1^*(p_1^*)}$
   defined by
   $$
   \overline x_1=\alpha(x_1^*,y_1^*)x_1^*,
   \overline y_1=\beta(x_1^*,y_1^*)y_1^*.
   $$
  We calculate
  $$
  u=x^a=(x_1^{a_1}y_1^{b_1})^{t_1}=[\alpha(\tilde x_1,\tilde y_1)\tilde x_1]^{a_1t_1}[\beta(\tilde x_1,\tilde y_1)\tilde y_1]^{b_1t_1}
  $$
  which implies
  $$
  (x^*)^a=[\alpha(x_1^*,y_1^*)x_1^*]^{a_1t_1}[\beta(x_1^*,y_1^*)y_1^*]^{b_1t_1}=\overline x_1^{a_1t_1}\overline y_1^{b_1t_1}.
  $$
  Set $\hat x_1=\overline\gamma^{\frac{a}{a_1t_1}}\overline x_1$ to get
  $u=(\hat x_1^{a_1}\overline y_1^{b_1})^{t_1}$, 
  so that $\hat x_1,\overline y_1,z$ are permissible parameters at $p_1^*$.

   For $2\le i\le m$, we have
   $$
   a_i(x,y)=a_i(\overline \gamma x^*,y^*)\equiv a_i(x^*,y^*)\mbox{ mod }m_p^r\hat{\mathcal O}_{V,p}
   $$
   and
   $$
   \begin{array}{lll}
   a_i(x^*,y^*)&=&a_i(\phi(x_1^*,y_1^*),\psi(x_1^*,y_1^*))\\
   &=&\overline x_1^{r_i}\overline y_1^{s_i}\overline a_i(\overline x_1,\overline y_1)\\
   &\equiv& \hat x_1^{r_i}\overline y_1^{s_i}\overline a_i(\hat x_1,\overline y_1)\mbox{ mod }m_p^r{\mathcal O}_{V_1,p_1^*}.
   \end{array}
   $$
   
  Thus we have expressions
  \begin{equation}\label{eq201}
  \begin{array}{lll}
  u&=&(\hat x_1^{a_1}\overline y_1^{b_1})^{t_1}\\
  v&=& P((\hat x_1^{a_1}\overline y_1^{b_1})^{\frac{t_1}{a}})+(\hat x_1^{a_1}\overline y_1^{b_1})^{\frac{t_1}{a}b}P_1(\hat x_1^{a_1}\overline y_1^{b_1})
  +(\hat x_1^{a_1}\overline y_1^{b_1})^{\frac{t_1}{a}b}(\overline\tau z^m\\
  &&+\hat x_1^{r_2}\overline y_1^{s_2}\overline a_2(\hat x_1,\overline y_1)z^{m-2}+\cdots +\hat x_1^{r_m}\overline y_1^{s_m}\overline a_m+h)
  \end{array}
  \end{equation}
  where $\overline\tau\in\hat{\mathcal O}_{V_1,p_1^*}$ is a unit series and
  $$
  h\in m_p^r\hat{\mathcal O}_{V_1,p_1^*}\subset (\hat x_1\overline y_1,z)^r.
  $$
  
  Set $s=r-m$, and write
  \begin{equation}\label{eq202}
  \begin{array}{lll}
  h&=&z^m\Lambda_0(x_1,\overline y_1,z)+z^{m-1}(\hat x_1\overline y_1)^{1+s}\Lambda_{1}(\hat x_1,\overline y_1)+z^{m-2}(\hat x_1\overline y_1)^{2+s}\Lambda_2(\hat x_1,\overline y_1)
  +\cdots\\
  &&+z(\hat x_1\overline y_1)^{(m-1)+s}\Lambda_{m-1}(\hat x_1,\overline y_1)+(\hat x_1\overline y_1)^{m+s}\Lambda_m(\hat x_1,\overline y_1)
  \end{array}
  \end{equation}
  with $\Lambda_0\in m_{p_1^*}\hat{\mathcal O}_{V_1,p_1^*}$ and
  $\Lambda_i\in \mathfrak k[[\hat x_1,\overline y_1]]$ for $1\le i\le m$.
  
  First suppose that $\overline a_m=1$.
  Substituting into (\ref{eq201}), we obtain an expression
  $$
   \begin{array}{lll}
  u&=&(\hat x_1^{a_1}\overline y_1^{b_1})^{t_1}\\
  v&=& P((\hat x_1^{a_1}\overline y_1^{b_1})^{\frac{t_1}{a}})+(\hat x_1^{a_1}\overline y_1^{b_1})^{\frac{t_1}{a}b}P_1(\hat x_1^{a_1}\overline y_1^{b_1})\\
  &&+(\hat x_1^{a_1}\overline y_1^{b_1})^{\frac{t_1}{a}b}(\overline\tau_0 z^m+\hat x_1^{r_2}\overline y_1^{s_2}\overline \tau_2z^{m-2}+\cdots +\hat x_1^{r_m}\overline y_1^{s_m}\overline \tau_m)
  \end{array}
  $$

  where $\overline\tau_0, \overline\tau_m\in\hat{\mathcal O}_{V_1,p_1^*}$ are  units, $\overline\tau_i\in\hat{\mathcal O}_{V_1,p_1^*}$ are units (or zero) for $2\le i\le m-1$.

  We have $\overline \tau_0=\overline\tau+\Lambda_0$,
  $\tau_i=\overline a_i(\hat x_1,\overline y_1)$ for $2\le i\le m-1$, and
  $$
  \overline\tau_m=\overline a_m+z^{m-1}\hat x_1^{1+s-r_m}\overline y_1^{1+s-s_m}\Lambda_{1}(\hat x_1,\overline y_1)+\cdots
  +\hat x_1^{m+s-r_m}\overline y_1^{m+s-s_m}\Lambda_m(\hat x_1,\overline y_1).
  $$
  We thus have the desired form
  (\ref{eq2*}).
  
  Now suppose that $\overline a_m=0$. Then $\overline a_{m-1}\ne 0$, and $z$ divides $h$ in (\ref{eq201}), so that $\Lambda_m=0$ in (\ref{eq202}).
  Substituting into (\ref{eq201}), we obtain an expression
  $$
   \begin{array}{lll}
  u&=&(\hat x_1^{a_1}\overline y_1^{b_1})^{t_1}\\
  v&=& P((\hat x_1^{a_1}\overline y_1^{b_1})^{\frac{t_1}{a}b})+(\hat x_1^{a_1}\overline y_1^{b_1})^{\frac{t_1}{a}b}P_1(\hat x_1^{a_1}\overline y_1^{b_1})\\
  &&+(\hat x_1^{a_1}\overline y_1^{b_1})^{\frac{t_1}{a}b}(\overline\tau_0 z^m+\hat x_1^{r_2}\overline y_1^{s_2}\overline \tau_2z^{m-2}+\cdots + \hat x_1^{r_{m-1}}\overline y_1^{s_{m-1}}\overline\tau_{m-1}z)
  \end{array}
  $$

  where $\overline\tau_0, \overline\tau_{m-1}\in\hat{\mathcal O}_{V_1,p_1^*}$ are  units, $\overline\tau_i\in\hat{\mathcal O}_{V_1,p_1^*}$ are units (or zero) for $2\le i\le m-2$.

  We have $\overline \tau_0=\overline\tau+\Lambda_0$,
  $\tau_i=\overline a_i(\hat x_1,\overline y_1)$ for $2\le i\le m-2$, and
  $$
  \overline\tau_{m-1}=\overline a_{m-1}+z^{m-1}\hat x_1^{1+s-r_{m-1}}\overline y_1^{1+s-s_{m-1}}\Lambda_{1}(\hat x_1,\overline y_1)+\cdots
  +\hat x_1^{m-1+s-r_{m-1}}\overline y_1^{m-1+s-s_{m-1}}\Lambda_{m-1}(\hat x_1,\overline y_1).
  $$
  
  We thus have the  form
  (\ref{eq2*}).

\end{proof}

\begin{Lemma}\label{1-ptprep} Suppose that $X$ is 2-prepared.  Suppose that  $p\in X$ is a 1-point 
with $\sigma_D(p)>0$ and $E$ is the component of $D$ containing $p$. Suppose that $Y$ is a finite set of points in $X$ (not containing $p$). Then 
there exists an affine neighborhood $U$ of $p$ in $X$ such that
\begin{enumerate}
\item[1)] $Y\cap U=\emptyset$.
\item[2)] $[E-U\cap E]\cap \mbox{Sing}_{1}(X)$ is a finite set of points.
\item[3)] $U\cap D=U\cap E$ and there exists $\overline x\in \Gamma(U,\mathcal O_X)$ such that
$\overline x=0$ is a local equation of $E$ in $U$.
\item[4)] There exists an \'etale map $\pi:U\rightarrow \AA_k^3=\mbox{Spec}(\mathfrak k[\overline x,\overline y,\overline z])$.
\item[5)] The Zariski closure $C$ in $X$ of the curve in $U$ with local equations $\overline x=\overline y=0$ satisfies the following:
\begin{enumerate}
\item[i)] $C$ is a nonsingular curve through $p$.
\item[ii)] $C$ contains no 3-points of $D$.
\item[iii)] $C$ intersects 2-curves of $D$ transversally at prepared points.
\item[iv)] $C\cap \mbox{Sing}_1(X)\cap (X-U)=\emptyset$.
\item[v)] $C\cap Y=\emptyset$.
\item[vi)] $C$ intersects $\mbox{Sing}_1(X)-\{p\}$ transversally at general points of curves in $\mbox{Sing}_1(X)$.
\item[vii)] There exist permissible parameters $x,y,z$ at $p$, with $\tilde x=\overline x, y=\overline y$, which satisfy the hypotheses of lemma \ref{localform}. 
\end{enumerate}
\end{enumerate}
\end{Lemma}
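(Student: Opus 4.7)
The plan is to build an affine neighborhood $U$ of $p$ using quasi-projectivity of $X$ so that every irreducible curve of $\mbox{Sing}_1(X) \cap E$ meets $U$ nontrivially, then to use Lemma \ref{localform} to produce one coordinate $\overline x$, and finally to cut with a generic affine combination $\overline y$ so that the resulting curve $C$ inherits all the required transversality and avoidance properties. Let $C_1,\ldots, C_k$ be the irreducible curves in $\mbox{Sing}_1(X) \cap E$, and pick a general point $q_i \in C_i$ for each $i$, avoiding 3-points and $Y$. Since $X$ is quasi-projective, there exists an affine open $U_0 \subset X$ containing the finite set $\{p, q_1, \ldots, q_k\}$ and disjoint from $Y$ and $D - E$ (the latter is disjoint from $p$ since $p$ is a 1-point). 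Then (1) and (3) hold, and $C_i \cap U_0$ is a nonempty open subset of $C_i$, hence cofinite, which gives the finiteness in (2).

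Apply Lemma \ref{localform} at $p$ to obtain permissible parameters $x = \gamma \tilde x,\ y,\ z$ at $p$ with $\tilde x, y \in \mathcal O_{X,p}$, and set $\overline x = \tilde x$. Extend to a regular system of parameters $\overline x, y, w \in \mathcal O_{X,p}$. After shrinking $U_0$ to an affine $U$ (still containing each $q_i$), we may assume $\overline x, y, w \in \Gamma(U, \mathcal O_X)$, $\overline x = 0$ cuts out $E \cap U$, and the morphism $(\overline x, y, w): U \to \AA^3_{\mathfrak k}$ is \'etale (\'etaleness at $p$ extends to an open neighborhood). For $\alpha \in \mathfrak k$, set $\overline y_\alpha = y + \alpha w$ and let $\overline C_\alpha$ denote the Zariski closure in $X$ of the unique irreducible component through $p$ of $V(\overline x, \overline y_\alpha) \subset U$; since $\overline x$ cuts out $E$ on $U$, we have $\overline C_\alpha \subset E$.

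I claim that $\overline C_\alpha$ satisfies (5.i)--(5.vi) for $\alpha$ outside a finite subset of $\mathfrak k$. Condition (i) is automatic since $\overline x, \overline y_\alpha$ remain part of a regular system at $p$ for every $\alpha$. Conditions (ii), (iv), (v) require $\overline C_\alpha$ to avoid the finite sets of 3-points of $D$, of $\mbox{Sing}_1(X) \cap E \cap (X - U)$ (finite by the first step), and of $Y$, each ruling out finitely many $\alpha$. For (iii), the 2-curves of $D$ lying in $E$ are finitely many, their non-prepared loci are finite by Lemma \ref{Specialize}, and the remaining conditions (transverse intersection at a prepared point) are Zariski open in $\alpha$. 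Condition (vi) is analogous: the curves in $\mbox{Sing}_1(X) \cap E$ are finitely many, and for generic $\alpha$, $\overline C_\alpha$ meets each transversely at a general point.

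The main obstacle is (vii): we must verify that $\overline x, \overline y_\alpha$ can actually play the roles of $\tilde x, y$ in Lemma \ref{localform}. For generic $\alpha$, the tangent direction of $\overline C_\alpha$ at $p$ in $E$ avoids the finitely many zeros in $\mathbb P^1_{\mathfrak k}$ of the degree-$m$ homogeneous part of $F(0, y, z)$, so for any $\overline z$ completing $\overline x, \overline y_\alpha$ to a regular system in $\mathcal O_{X,p}$, the coefficient $F$ of the expression (\ref{1-point}) written in these parameters satisfies $\mbox{ord}\, F(0, 0, \overline z) = m = \sigma_D(p) + 1$. The formal-implicit-function construction in the proof of Lemma \ref{localform} (modifying $\overline z$ to kill the $\overline z^{m-1}$ term via $\partial^{m-1}F/\partial \overline z^{m-1} = 0$) then yields $z \in \hat{\mathcal O}_{X,p}$ such that $(\overline x, \overline y_\alpha, z)$ are permissible at $p$ and form (\ref{eq1}) holds. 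Choosing such an $\alpha_0$, setting $\overline y = \overline y_{\alpha_0}$ and $\overline z = w$, the morphism $\pi = (\overline x, \overline y, \overline z): U \to \AA^3_{\mathfrak k}$ (which differs from the \'etale $(\overline x, y, w)$ by an affine change of variables on $\AA^3$) is \'etale, and together with $C = \overline C_{\alpha_0}$ satisfies all required properties.
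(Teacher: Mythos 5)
Your outline (an affine neighborhood meeting every curve of $\mbox{Sing}_1(X)\cap E$, then a general curve through $p$ cut out by two of three \'etale coordinates) has the right shape, but there is a genuine gap at the step where you pass from germs to the neighborhood $U$. The parameters $y,w$ obtained from Lemma \ref{localform} are only elements of $\mathcal O_{X,p}$: they are regular, and the map $(\overline x,y,w)$ is \'etale, only on some unspecified neighborhood of $p$. When you say ``after shrinking $U_0$ to an affine $U$ (still containing each $q_i$) we may assume $\overline x,y,w\in\Gamma(U,\mathcal O_X)$'' and that the map is \'etale on $U$, you are asserting without justification that this can be done while keeping the prescribed points $q_i$, which lie on curves of $\mbox{Sing}_1(X)\cap E$ possibly far from $p$ and at which $y$ or $w$ need not even be regular, let alone form unramified coordinates. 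If you shrink to where regularity and \'etaleness hold, you lose control of $U\cap E$, and conclusions 2), 5iv) and 5vi) fail -- precisely the semi-global parts of the lemma that Proposition \ref{Step3.1} uses (it needs $C\cap\mbox{Sing}_1(X)\subset U$ and transversality at general points of the curves of $\mbox{Sing}_1(X)$). This is why the paper does not argue with local parameters: it produces $\overline x$ and candidates $\overline y_1,\ldots,\overline y_n$ as global sections of a high multiple of an ample divisor twisted by $\mathcal I_p$, generating the coordinate ring $R$ of an affine $V=X-(H+H')$, and then shows that for a general pair of linear combinations $L_1^A,L_2^A$ the map $\mbox{Spec}(R)\rightarrow\mbox{Spec}(\mathfrak k[\overline x,\overline y_1,\overline y_2])$ is unramified at $p$ and at chosen points $p_1,\ldots,p_t$, one on each curve of $\mbox{Sing}_1(X)\cap V$, so the non-\'etale locus can be removed by an ample divisor missing these points; that is what yields an affine \'etale $U\rightarrow\AA^3$ still satisfying 2).

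A second, smaller problem is that your genericity argument runs inside the pencil $\overline y_\alpha=y+\alpha w$ and only inside $U$. All members of the pencil contain the base locus $V(\overline x,y,w)$, and conditions 5ii)--5v) concern the whole Zariski closure $C$ in $X$, including $C\setminus U$, where a one-parameter family gives no avoidance statement without further argument (a point common to the closures for all $\alpha$ cannot be excluded a priori). The paper avoids this by using the full linear system of sections of $\mathcal O_X(s(H+H'))\otimes\mathcal I_p$, which is base point free after blowing up $p$, so that each of 5i)--5vii) is an open and nonempty condition on the space of sections (Bertini on the blow up of $p$). Note also that the lemma's $C$ is the closure of the entire curve $\{\overline x=\overline y=0\}$ in $U$, not only of its irreducible component through $p$; your restriction to that component would have to be removed or justified in the later application. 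Your treatment of 5vii) via the tangent direction avoiding the zeros of the degree $m$ leading form of $F(0,y,z)$ is essentially sound, but it too must be implemented inside a linear system whose general member can be controlled globally.
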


\begin{proof} Let $H$ be an effective, very ample divisor on $X$ such that $H$ contains $Y$ and $D-E$, but $H$ does not contain $p$ and does not contain any one dimensional components of $\mbox{Sing}_1(X,D)\cap E$. There exists $n>0$ such that $E+nH$ is ample, $\mathcal O_X(E+nH)$ is generated by global sections and a general member  $H'$ of the linear system $|E+nH|$ does not contain any one dimensional components of $\mbox{Sing}_1(X,D)\cap E$, and does not contain $p$. $H+H'$ is ample, so $V=X-(H+H')$ is affine. Further, there exists $f\in \mathfrak k(X)$, the function field of $X$, such that $(f)=H'-(E+nH)$. Thus $\overline x=\frac{1}{f}\in\Gamma(V,\mathcal O_X)$ as $X$ is normal and $\overline x$ has no poles on $V$. $\overline x=0$ is a local equation of $E$ on $V$. We have that $V$ satisfies the conclusions  1), 2)  and 3) of the lemma.

Let $R=\Gamma(V,\mathcal O_X)$. $R=\cup_{s=1}^{\infty}\Gamma(X,\mathcal O_X(s(H+H'))$ is a finitely
generated $\mathfrak k$-algebra. Thus for $s\gg 0$, $R$ is generated by $\Gamma(X,\mathcal O_X(s(H+H'))$ as a $\mathfrak k$-algebra.

From the exact sequences
$$
0\rightarrow \Gamma(X,\mathcal O_X(s(H+H'))\otimes\mathcal I_p)\rightarrow \Gamma(X,\mathcal O_X(s(H+H'))\rightarrow \mathcal O_{X,p}/m_p\cong k
$$
and the fact that $1\in\Gamma(X,\mathcal O_X(s(H+H'))$, we have that $R$ is generated by $\Gamma(X,\mathcal O_X(s(H+H'))\otimes \mathcal I_P)$ as a $\mathfrak k$-algebra for all $s\gg0$.

For $s\gg 0$, and a general member $\sigma$ of $\Gamma(X,\mathcal O_{X}(s(H+H'))\otimes\mathcal I_p)$ we have that the curve $\overline C= B\cdot E$, where $B$ is the divisor $B =(\sigma)+s(H+H')$, satisfies 
the conclusions of 5) of the lemma; since each of the conditions 5i) through 5vii) is an open condition on $\Gamma(X,\mathcal O_X(s(H+H')\otimes\mathcal I_p))$, we need only establish that each condition holds on a nonempty subset. This follows from the fact that $H+H'$ is ample,  Bertini's theorem
applied to the base point free linear system $|\phi^*(s(H+H'))-A|$, where $\phi:W\rightarrow X$ is the blow up of $p$
with exceptional divisor $A$, and the fact that 
$$
\phi_*(\mathcal O_W(\phi^*(s(H+H')-A))=\mathcal O_X(s(H+H'))\otimes\mathcal I_p.
$$

For fixed $s\gg 0$,
let $\overline x,\overline y_1,\ldots,\overline y_n$ be a $\mathfrak k$-basis of $\Gamma(X,\mathcal O_X(s(H+H'))\otimes\mathcal I_p)$, so that $R=\mathfrak k[\overline x, \overline y_1,\ldots,\overline y_n]$. We have shown that there exists a Zariski open
set $\overline Z$ of $k^n$ such that for $(b_1,\ldots, b_n)\in \overline Z$, the curve $C$ in $X$ which is the Zariski closure of the curve with local equation $\overline x=b_1\overline y_1+\cdots+b_n\overline y_n=0$ in $V$ satisfies 5) of the conclusions of the lemma.

Let $C_1,\ldots, C_t$ be the curves in $\mbox{Sing}_1(X)\cap V$, and let $p_i\in C_i$ be  closed points
such that $p,p_1,\ldots,p_t$ are distinct. Let $Q_0$ be the maximal ideal of $p$ in $R$, and $Q_i$ be the maximal ideal
in $R$ of $p_i$ for $1\le i\le t$. We have that $\overline x$ is nonzero in $Q_i/Q_i^2$ for all $i$. For a matrix $A=(a_{ij})
\in \mathfrak k^{2n}$, and $1\le i\le 2$, let 
$$
L_i^A(\overline y_1,\ldots,\overline y_n)=\sum_{j=1}^na_{ij}\overline y_j.
$$
There exist $\alpha_{jk}\in \mathfrak k$ such that $Q_k=(\overline y_1-\alpha_{1,k},\ldots,\overline y_n-\alpha_{n,k})$
for $0\le k\le t$. By our construction, we have $\alpha_{1,0}=\cdots=\alpha_{n,0}=0$. For each
$0\le k\le t$, there exists a non empty Zariski open subset $Z_k$ of $k^{2n}$ such that 
$$
\overline x,L_1^A(\overline y_1,\ldots,\overline y_n)-L_1^A(\alpha_{1,k},\ldots,\alpha_{n,k}),
L_{2}^A(\overline y_1,\ldots,\overline y_n)-L_{2}^A(\alpha_{1,k},\ldots,\alpha_{n,k})
$$
is a $\mathfrak k$-basis of $Q_k/Q_{k+1}^2$. Suppose $(a_{1,1},\ldots, a_{1,n})\in\overline Z$ and
$A\in Z_0\cap\cdots \cap Z_t$. 

We will show that $\overline x, L_1^A, L_{2}^A$ are algebraically independent over $\mathfrak k$. 
Suppose not. Then there exists a nonzero polynomial $h\in \mathfrak k[t_1,t_2, t_{3}]$ such that
$h(\overline x, L_1^A, L_{2}^A)=0$. Write $h=H+h'$ where $H$ is the leading form of $h$, and $h'=h-H$ is  a polynomial of larger order than the degree $r$ of $H$. 
Now $H(\overline x,L_1^A, L_{2}^A)=-h'(\overline x,L_1^A, L_{2}^A)$, so that 
$H(\overline x,L_1^A, L_{2}^A)=0$ in $Q_0^r/Q_0^{r+1}$. Thus $H=0$, since $R_{Q_0}$ is a regular local ring,
which is a contradiction. Thus $\overline x,L_1^A, L_2^A$ are algebraically independent. Without loss of generality, we may assume that $L_i^A=\overline y_i$ for $1\le i\le 2$.  

Let $S=\mathfrak k[\overline x, \overline y_1,\overline y_2]$, a polynomial ring in $3$ variables over $\mathfrak k$.
$S\rightarrow R$ is unramified at $Q_i$ for $0\le i\le t$ since 
$$
(\overline x,\overline y_1-\alpha_{1,i},\overline y_{2}-\alpha_{2,i})R_{Q_i}=Q_iR_{Q_i}
$$
for $0\le i\le t$.

Let $W$ be the closed locus in $V$ where $V\rightarrow \mbox{Spec}(S)$ is not \'etale. 
We have that $p,p_1,\ldots,p_t\not\in W$, so there exists an ample effective divisor $\overline H$ on $X$ such that 
$W\subset  \overline H$ and $p,p_1,\ldots,p_t\not\in \overline H$. Let $U=V-\overline H$. $U$ is affine,
and $U\rightarrow \mbox{Spec}(S)\cong \AA^3$ is \'etale, so satisfies 4) of the conclusions of the lemma.

\end{proof}

\begin{Lemma}\label{LemmaA} Suppose $X$ is 2-prepared with respect to $f:X\rightarrow S$, $p\in D$ is a  prepared point, and $\pi_1:X_1\rightarrow X$ is the blow up of $p$. Then all points of $\pi_1^{-1}(p)$ are prepared.
\end{Lemma}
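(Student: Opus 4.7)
The plan is to prove the lemma by direct case analysis on the type of $p\in D$. In each case the hypothesis $\sigma_D(p)=0$ gives a clean condition on the $F$ in the corresponding form (\ref{1-point})--(\ref{3-point}): $F(0,0,0)\ne 0$ at a 2- or 3-point, and $F(0,y,z)$ of order $1$ at a 1-point; these conditions are algebraically preserved by the monomial substitutions describing the standard affine charts of $\pi_1$. If $p$ is a 3-point, I first observe that $\pi_1$ is toroidal with respect to $D$: a direct check of pulled-back log differentials in each chart gives $\pi_1^{*}\Omega_X^1(\log D)=\Omega_{X_1}^1(\log D_{X_1})$ (this is the content of the 3-point substitution (\ref{eq94}) in the proof of Lemma~\ref{Torgood}), so Lemma~\ref{Torgood} yields $\sigma_D(p_1)\le\sigma_D(p)=0$ for every $p_1\in\pi_1^{-1}(p)$.

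If $p$ is a 2-point, I pick permissible parameters giving (\ref{2-point}) with $F(0,0,0)\ne 0$ and $ad-bc\ne 0$, and analyze the three standard charts of the blow-up. At the 3-point origin of the $z$-chart the new exponent matrix $\bigl(\begin{smallmatrix}a & b & a+b\\ c & d & c+d\end{smallmatrix}\bigr)$ has rank $2$ since $ad-bc\ne 0$, and $F_{\rm new}=F(z_1x_1,z_1y_1,z_1)$ remains a unit; at the 2-point origins of the $x$- and $y$-charts one checks $a'd'-b'c'=ad-bc\ne 0$ and $F_{\rm new}$ is still a unit; at a non-origin 1-point $(0,\alpha,\beta)$ of the exceptional with $\alpha\ne 0$, I introduce the fractional-power parameter $w=x_1(\alpha+y_1')^{b/(a+b)}\in\hat{\mathcal O}_{X_1,p_1}$ (available in the completion since $\mathfrak{k}$ is algebraically closed and $\mathrm{char}\,\mathfrak{k}=0$) to restore the monomial form $u=w^{(a+b)l}$, after which expansion gives the new $F_{\rm new}'(0,y_1',z_1')$ a linear coefficient in $y_1'$ proportional to $(ad-bc)/(a+b)$, which is nonzero.

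If $p$ is a 1-point, I choose permissible parameters giving (\ref{1-point}) with the linear part $c_{010}y+c_{001}z$ of $F(0,y,z)$ nonzero. In the $x$-chart every point is a 1-point; substitution and absorption of pure powers of $x_1$ into $P$ gives a form (\ref{1-point}) with new $F_{\rm new}(0,y_1,z_1)=c_{010}y_1+c_{001}z_1$ of order $1$, preserved under translation to any point $(0,\alpha,\beta)$ after a further absorption. In the $y$-chart the origin is a 2-point; factoring $y_1$ out of $F(y_1x_1,y_1,y_1z_1)$ yields a form (\ref{2-point}) with $F_{\rm new}(0,0,z_1)=c_{010}+c_{001}z_1$ of order $\le 1$, hence $\sigma_D(p_1)=0$; the non-origin 1-points in this chart use the same fractional-power substitution as in the 2-point case, with the required nonvanishing linear term now provided by $(c_{010},c_{001})\ne(0,0)$. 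The $z$-chart is symmetric. The main bookkeeping throughout is arranging the ``no terms which are a power of $x^a y^b\cdots$'' condition of (\ref{1-point})--(\ref{3-point}) after each substitution by peeling off the pure-monomial part of $F_{\rm new}$ into the new $P$; I expect the most delicate subcase to write cleanly is the non-origin 1-point in the 2-point case, where the fractional-power change of variables, the translation, and the pure-monomial absorption must be combined in a single calculation.
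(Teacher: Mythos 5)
Your overall strategy is exactly the paper's: the published proof is a one-line invitation to substitute the local equations of the blow up of $p$ into a prepared form (\ref{1-point}), (\ref{2-point}) or (\ref{3-point}) and check each chart, and your 1-point and 3-point cases (the latter via the observation that blowing up a 3-point is toroidal for $D$, so Lemma \ref{Torgood} applies) carry this out correctly.

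There is, however, a genuine gap in your 2-point case: you assert that $\sigma_D(p)=0$ at a 2-point forces $F(0,0,0)\ne 0$ and $ad-bc\ne 0$, but by the formula in Section \ref{Section2} a 2-point is also prepared when $\mbox{ord}\,F(0,0,z)=1$ with $F(0,0,0)=0$, and in that subcase $ad-bc$ need not be nonzero (the parenthetical ``$ad-bc\ne 0$'' is only asserted when $F$ is a unit). Such points genuinely occur and are precisely where Lemma \ref{LemmaA} gets used: the prepared 2-points of forms 4) and 5) in Lemma \ref{LemmaC} have $F=z$, and Proposition \ref{Step3.1} applies Lemma \ref{LemmaA} to the prepared points of $\Sigma_2\cup\Sigma_3$, which include 2-points of this kind. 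For these points your computation does not apply verbatim: $F_{\mathrm{new}}$ is no longer a unit, so in each chart you must first factor a copy of the exceptional divisor out of $F$ (e.g.\ in the $z$-chart $F(x_1z_1,y_1z_1,z_1)=z_1G$ with $G$ a unit because $\partial F/\partial z(0,0,0)\ne 0$, producing exponent vector $(c+d+1)$ rather than $(c+d)$), and then recheck the order conditions and the relevant $2\times 2$ minors with the shifted exponents, since the determinant identities you quote ($a'd'-b'c'=ad-bc$, the linear coefficient proportional to $(ad-bc)/(a+b)$) change and can no longer be justified by ``$ad-bc\ne 0$''. The conclusion still holds (the extra unit of exceptional multiplicity rescues the rank and order computations), but that subcase must be written out; as it stands your case analysis omits it, so the proof is incomplete.
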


\begin{proof} The conclusions follow from substitution of local equations of the blow up of a point into a prepared form (\ref{1-point}), (\ref{2-point}) or (\ref{3-point}).
\end{proof}

\begin{Lemma}\label{LemmaC} Suppose that $X$ is 2-prepared with respect to $f:X\rightarrow S$, and that $C$
is a permissible curve for $D$, which is not a 2-curve. Suppose that $p\in C$ satisfies $\sigma_D(p)=0$. Then there exist permissible parameters $x,y,z$ at $p$ such that one of the following forms hold:
\begin{enumerate}
\item[1)] $p$ is a 1-point of $D$ of the form of (\ref{1-point}), $F=z$ and $x=y=0$ are formal local equations of $C$ at $p$.
\item[2)] $p$ is a 1-point of $D$ of the form of (\ref{1-point}), $F=z$ and $x=z=0$ are formal local equations of $C$ at $p$.
\item[3)] $p$ is a 1-point of $D$ of the form of (\ref{1-point}), $F=z$, $x=z+y^r\sigma(y)=0$ are formal local equations of $C$ at $p$,
where $r>1$ and $\sigma$ is a unit series.
\item[4)] $p$ is a 2-point of $D$ of the form of (\ref{2-point}), $F=z$, $x=z=0$ are formal local equations of $C$ at $p$.
\item[5)] $p$ is a 2-point of $D$ of the form of (\ref{2-point}), $F=z$, $x=g(y,z)=0$ are formal local equations of $C$ at $p$,
where $g(y,z)$ is not divisible by $z$.
\item[6)] $p$ is a 2-point of $D$ of the form of (\ref{2-point}), $F=1$ (so that $ad-bc\ne 0$) and $x=z=0$ are formal local equations of $C$
at $p$.
\end{enumerate}
Further, there are at most a finite number of 1-points on $C$ satisfying condition 3) (and not satisfying condition 1) or 2)).
\end{Lemma}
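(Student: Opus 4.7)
The plan is a direct coordinate normalization at $p$ in two stages, followed by a tangency argument for the ``further'' clause. First, I would use $\sigma_D(p)=0$ to normalize $F$. From the formulas for $\sigma_D$, the hypothesis gives $\text{ord}\,F(0,y,z)=1$ at a 1-point, and at a 2-point gives either $F(0,0,0)\ne 0$ (so $ad-bc\ne 0$) or $\text{ord}\,F(0,0,z)=1$. In the unit subcase at a 2-point I would absorb $F$ into $x^cy^d$ by a formal substitution $x\mapsto Ax$, $y\mapsto By$ with unit series $A,B\in\hat{\mathcal O}_{X,p}^{*}$ satisfying $A^aB^b=1$ and $A^cB^d=F$, which is solvable exactly because $ad-bc\ne 0$; this produces $F=1$. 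In every other subcase, after possibly swapping $y\leftrightarrow z$ at a 1-point (permissible since $u=x^a$ is $y,z$-symmetric), Weierstrass preparation in $z$ gives $F=\text{unit}\cdot(z-g)$; the shift $z\mapsto z-g$ followed by absorbing the leading unit into $z$ then yields $F=z$.

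Next, since $C\subset D$ is permissible, nonsingular, and not a 2-curve, its local equation inside the component of $D$ through $p$ has the form $g(y,z)=0$ with $\text{ord}(g)=1$. At a 2-point the SNC hypothesis additionally forces $\partial g/\partial z(p)\ne 0$, for otherwise the tangent of $C$ would agree with that of the 2-curve $\{x=y=0\}$ and no common SNC chart could exist. The six cases now come from Weierstrass preparation applied to $g$ in the remaining free direction. At a 1-point with $F=z$ ($y$ free, $z$ rigid): if $\partial g/\partial y(p)\ne 0$, Weierstrass in $y$ followed by the shift $y\mapsto y-\phi(z)$ gives case 1); otherwise $g=\text{unit}\cdot(z-\psi(y))$ with $\text{ord}(\psi)\ge 2$ or $\psi=0$, and rigidity of $z$ produces case 2) when $\psi=0$ and case 3) when $\psi\ne 0$. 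At a 2-point with $F=z$, Weierstrass in $z$ yields $g=\text{unit}\cdot(z-\phi(y))$; rigidity of $z$ gives case 4) if $\phi=0$ and case 5) with $f=z-\phi(y)$ (visibly not divisible by $z$) otherwise. At a 2-point with $F=1$, $z$ is again free, so the shift $z\mapsto z-\phi(y)$ produces case 6).

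For the finiteness of case 3), I would observe that at a 1-point $p\in C$ with $\sigma_D(p)=0$, cases 2) and 3) are equivalent to the intrinsic condition $T_pC=T_pD\cap\ker dF|_p$, which defines a 1-dimensional distribution $\ell$ on the smooth 1-point stratum of $D$. If $C$ is not tangent to $\ell$ at the generic point of its 1-point stratum, then tangency is a closed proper condition on $C$ holding at only finitely many points, so cases 2) and 3) together — and a fortiori case 3) alone — occur only finitely often. If instead $C$ is everywhere tangent to $\ell$ on an open subset, then $C$ is a formal integral curve of $\ell$, which forces $C$ to coincide with the leaf $\{x=z=0\}$ to all orders at every generic 1-point; hence $\psi\equiv 0$ there and case 2) (not 3)) holds outside a finite set.

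The main obstacle is the last dichotomy: the earlier coordinate manipulations are mechanical consequences of Weierstrass preparation once the SNC-forced orientation of $C$ has been identified, but distinguishing cases 2) and 3) requires lifting first-order tangency of $C$ to $\ell$ up to full formal agreement with the leaf. I would handle this by analyzing the order of vanishing of $z$ along the formal branches of $C$ at each 1-point and exploiting noetherianity of $\hat{\mathcal O}_{C,p}$ to convert ``infinite formal order'' into a global foliation condition.
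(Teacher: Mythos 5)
Your derivation of the six normal forms is correct and matches what the paper leaves implicit: $\sigma_D(p)=0$ lets you take $z=F$ (or, at a 2-point with $F$ a unit, solve $A^aB^b=1$, $A^cB^d=F$ using $ad-bc\neq 0$ to get $F=1$), and then the formal implicit function theorem applied to a local equation $g$ of $C$ inside the component of $D$, with $z$ rigid, sorts the cases; your observation that permissibility (SNC with $D$) forces $\partial g/\partial z(p)\neq 0$ at a 2-point is right and is in fact needed to reach forms 4) and 6), a point the paper buries in ``a similar argument.'' Where you genuinely diverge is the finiteness clause. The paper sets up no tangency dichotomy: it starts from one point $p$ where 3) holds but 2) fails, chooses permissible parameters with $x=z=0$ a formal local equation of $C$, notes that failure of 2) forces $F(0,y,0)\neq 0$ and hence $\frac{\partial F}{\partial y}(0,y,0)\neq 0$, and then uses that $\frac{\partial F}{\partial y}=\frac{1}{x^b}\frac{\partial v}{\partial y}$ is an honest regular function on an \'etale neighborhood $U$, so it vanishes at only finitely many points of $C\cap U$; wherever it is nonzero one gets form 1), and since $C$ is irreducible the complement of the image of $U$ in $C$ is finite, giving the claim. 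That contrapositive needs only the easy implication (non-tangency at a general point yields form 1) almost everywhere) and so entirely sidesteps the step you flag as your main obstacle: upgrading generic first-order tangency to formal coincidence with the leaf, for which your sketch via orders of vanishing along formal branches and noetherianity of $\hat{\mathcal O}_{C,p}$ is the weakest part of the proposal. That branch is true and can be closed in one line in characteristic zero --- generic tangency says the pullback to $C$ of $\frac{1}{x^b}dv$ (the \'etale-local realization of $d(F|_E)$, which is also what makes your ``intrinsic distribution'' $\ell$ precise) vanishes identically, so $F$ is constant, hence zero, along $C$ at every 1-point with $\sigma_D=0$, which is exactly form 2) --- but you should either argue it this way or adopt the paper's shortcut; as written, the foliation-integration step is asserted rather than proved.
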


\begin{proof} Suppose that $p$ is a 1-point. We have permissible parameters $x,y,z$ at $p$ such that a form (\ref{1-point}) holds at $p$ with $F=z$. There exists a series $g(y,z)$ such that $x=g=0$ are formal local
equations of $C$ at $p$. By the formal implicit function theorem, we get one of the forms 1), 2) or 3).
A similar argument shows that one of the forms 4), 5) or 6) must hold if $p$ is a 2-point.

Now suppose that $p\in C$ is a 1-point, $\sigma_D(p)=0$ and a form 3) holds at $p$.
There exist permissible parameters $x,y,z$ at $p$, with an expression (\ref{1-point}), such that  $x=z=0$ are formal local equations of $C$ at $p$ and $x,y,z$ are uniformizing parameters on an \'etale cover $U$ of an neighborhood  of $p$, where we can choose $U$ so that 
$$
\frac{\partial F}{\partial y}=\frac{1}{x^b}\frac{\partial v}{\partial y}\in\Gamma(U,\mathcal O_X).
$$
Since there is not a form 2) at $p$, we have that $z$ does not divide $F(0,y,z)$, so that $F(0,y,0)\ne 0$. Since $F$ has no constant term, we have that $\frac{\partial F}{\partial y}(0,y,0)\ne 0$. There exists a Zariski open subset of $\mathfrak k$ such that $\alpha\in\mathfrak k$ implies $x,y-\alpha,z$ are regular parameters at a point $q\in U$. There exists a Zariski open subset of $\mathfrak k$ of such $\alpha$ so that
$\frac{\partial F}{\partial y}(0,\alpha,0)\ne 0$. Thus $x,y-\alpha,z$ are permissible parameters at $q$ giving a form 1) at $q\in C$.

\end{proof}

\begin{Lemma}\label{LemmaD} Suppose that $X$ is 2-prepared. Suppose that $C$ is a permissible curve on $X$ which is not a 2-curve and $p\in C$ satisfies $\sigma_D(p)=0$.
Further suppose that either a form 3) or 5) of the conclusions of Lemma \ref{LemmaC} hold at $p$.
Then there exists a sequence of blow ups of points $\pi_1:X_1\rightarrow X$ above $p$ such that $X_1$ is 2-prepared and $\sigma_{D_1}(p_1)=0$ for all $p_1\in \pi_1^{-1}(p)$, and the strict transform of $C$ on $X_1$ is permissible, and has the form 4) or 6) of Lemma \ref{LemmaC} at the point above $p$.
\end{Lemma}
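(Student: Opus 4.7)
The plan is to induct on the parameter $r$ appearing in the local equations of $C$, at each stage blowing up the unique point $p_i$ on the strict transform of $C$ above the previous distinguished point. After $r$ blow ups total (with an initial one if we start in form 3)), coordinate changes will bring the distinguished point into form 6).

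First I would reduce form 3) to form 5). If $p$ is the 1-point of form 3), with $C : x = z + y^r\sigma(y) = 0$ and $r > 1$, the tangent direction of $C$ is $(0,1,0)$, so blowing up $p$ produces a unique point $p_1 = (0,0,0)$ on the strict transform of $C$ in the chart $x = y_1 x_1$, $y = y_1$, $z = y_1 z_1$. Substituting into $u = x^a$, $v = P(x) + x^b z$ gives a form (\ref{2-point}) at the 2-point $p_1$ with $u = (x_1 y_1)^a$, $v = P(x_1 y_1) + x_1^b y_1^{b+1} z_1$, and $F = z_1$, and the strict transform $C_1 : x_1 = 0, z_1 + y_1^{r-1}\sigma(y_1) = 0$ is in form 5) with parameter $r-1$ and coefficient $\tilde h_1 = 1$.

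For form 5) at a 2-point $p$, write $f(y,z) = y^r\sigma_0(y) + z\tilde h(y,z)$ with $\sigma_0(0) \ne 0$; permissibility of $C$ (transversality with $\{y = 0\}$) forces $\tilde h(0,0) \ne 0$. In the inductive step $r \geq 2$, the tangent of $C$ is pure $(0,1,0)$, so blowing up $p$ gives a 2-point $p_1 = (0,0,0)$ in the $y$-chart where $v = P(x_1 y_1) + x_1^b y_1^{b+1} z_1$ and the strict transform becomes $x_1 = 0,\ y_1^{r-1}\sigma_0(y_1) + z_1 \tilde h(y_1, y_1 z_1) = 0$, i.e., form 5) with parameter $r - 1$ and $\tilde h_1(0,0) = \tilde h(0,0) \ne 0$ preserved. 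In the base case $r = 1$, the tangent of $C$ has nonzero $y$- and $z$-components, and the strict transform meets the exceptional at $p_1 = (0, 0, -\alpha)$ with $\alpha = \sigma_0(0)/\tilde h(0,0)$. I would handle this with three successive coordinate changes: a translation $z_1' = z_1 + \alpha$ moving $p_1$ to the origin; a rescaling $\bar x_1 = x_1/(z_1' - \alpha)$, $\bar y_1 = y_1(z_1' - \alpha)$ that preserves $u = (\bar x_1 \bar y_1)^a$ and transforms $v$ into $P(\bar x_1 \bar y_1) + \bar x_1^b \bar y_1^{b+1}$ by absorbing the unit $z_1' - \alpha$; and finally $\hat z = z_1' - g(\bar y_1)$, where $g$ parametrizes $C_1$ as a graph by the implicit function theorem. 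In the parameters $(\bar x_1, \bar y_1, \hat z)$ the point $p_1$ is in form 6) with $F = 1$ and $C_1 = \{\bar x_1 = \hat z = 0\}$.

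At each stage, $X_i$ remains 2-prepared and $\sigma_{D_i}(q) = 0$ for every point $q$ above $p$ by iterated application of Lemma \ref{LemmaA}, since $p$ itself is prepared. Permissibility of the strict transform at the distinguished point is preserved throughout because each iteration maintains $\tilde h_i(0,0) \ne 0$, which yields the transversality with the other component of $D_i$ required for SNC. The hard part will be the base case $r = 1$: verifying that the three-step coordinate change simultaneously absorbs the unit appearing in $F$ (via the rescaling of $x_1, y_1$, whose Jacobian is a nonzero constant at the origin) and straightens the curve (via $\hat z$), producing a legitimate permissible parameter system that puts $p_1$ in form 6). Form 4) rather than 6) would arise only if the final $F$ were order one in $z$, which does not occur in this blow up chain since the shift in the $r = 1$ step makes $F$ a unit series.
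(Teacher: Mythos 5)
Your overall strategy (repeatedly blowing up the point of the strict transform of $C$ lying above $p$) is the same as the paper's, but your base case $r=1$ has a genuine gap. After one blow up you sit at $p_1$ with $u=(x_1^{a}y_1^{a+b})^{l}$ and $v=P(x_1^{a}y_1^{a+b})+x_1^{c}y_1^{c+d+1}(\hat z_1-\alpha)$, and you propose to absorb the unit $\hat z_1-\alpha$ by the rescaling $\bar x_1=x_1/(\hat z_1-\alpha)$, $\bar y_1=y_1(\hat z_1-\alpha)$, claiming this preserves $u$. It does not: that particular rescaling preserves $u$ only when the two exponents of $u$ are equal, which never happens after a blow up (they are $a$ and $a+b$ with $b>0$, and in your own chain they grow apart further). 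In general a unit can be pushed into the monomial $x_1^{c}y_1^{c+d+1}$ while preserving $u$ only when $a(c+d+1)-(a+b)c=ad-bc+a\neq 0$, a condition you never verify. It can fail under the hypotheses of the lemma: take $u=(xy)^{l}$, $v=P(xy)+x^{2}yz$ (so $a=b=1$, $c=2$, $d=1$, form 5) with $F=z$) and $C:\ x=y+z+y^{2}=0$. Then $ad-bc+a=0$, the term $-\alpha x_1^{2}y_1^{4}=-\alpha(x_1y_1^{2})^{2}$ is a power of $x_1^{a}y_1^{a+b}$ and must be absorbed into $P$ to respect the normalization in (\ref{2-point}); what remains is $F_1=\hat z_1$, of order one in $\hat z_1$, with $C_1:\ x_1=\hat z_1+y_1=0$, i.e.\ you are back in form 5), not in form 6), and not in form 4) either. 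So both your claim that one blow up finishes the $r=1$ case and your final sentence (``form 4) does not occur since $F$ is a unit series'') are unjustified; moreover in this degenerate return to form 5) the new parameter need not be smaller, so your induction on $r$ does not obviously terminate.

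The paper sidesteps exactly this difficulty by changing permissible parameters at the start so that $C=\{x=z=0\}$, writing $F=x\Omega+f(y,z)$ with $\mathrm{ord}\,f=1$ and $y,z\nmid f$. Then every blow up is at the origin of the $y$-chart, the strict transform of $C$ stays the coordinate axis $x_r=z_r=0$, and after $r=\mathrm{ord}\,f(y,0)$ steps the transformed $F'$ is a unit; the monomial condition needed for form 6) at that stage, $a(d+r)-bc\neq 0$, is exactly what the ``no terms which are a power of $x^{a}y^{b}$'' normalization at $p$ guarantees, because $a_{r,0}x^{c}y^{d+r}$ is a genuine term of $x^{c}y^{d}F$. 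If you want to keep your coordinates (with $F=z$ and $C$ a graph), you must add this absorption-and-determinant bookkeeping and a termination argument for the degenerate case; adapting the coordinates to $C$ as in the paper is the cleaner fix.
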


\begin{proof} 
If $p$ is a 1-point, let $\pi':X'\rightarrow X$ be the blow ups of $p$, and let $C'$ be the strict transform of $C$ on $X'$. Let $p'$ be the point on $C'$ above $p$. Then $p'$ is a 2-point and $\sigma_D(p')=0$. We may 
thus assume that $p$ is a 2-point and a form 5) holds at $p$. For $r\in\ZZ_+$, 
let
$$
X_r\rightarrow X_{r-1}\rightarrow \cdots \rightarrow X_1\rightarrow X
$$
be the sequence of blow ups of the point $p_i$ which is the intersection of the strict transform $C_i$ of $C$ on $X_i$ with the preimage of $p$.

There exist permissible parameters $x,y,z$ at $p$ such that $x=z=0$ are formal local equations of $C$ at $p$, and a form (\ref{2-point}) holds at $p$ with $F=x\Omega+f(y,z)$. We have that $\mbox{ord }f(y,z)=1$, $\mbox{ord }\Omega(0,y,z)\ge 1$, $y$ does not divide $f(y,z)$ and $z$ does not divide $f(y,z)$.

At $p_r$, we have permissible parameters $x_r,y_r,z_r$ such that
$$
x=x_ry_r^r,\,\,y=y_r,\,\,z=z_ry_r^r.
$$
$x_r=z_r=0$ are local equations of $C_r$ at $p_r$.  We have a form (\ref{2-point}) at $p_r$ with 
$$
\begin{array}{lll}
u&=& (x_r^ay_r^{ar+b})^l\\
v&=& P(x_r^ay_r^{ar+b})+x_r^cy_r^{cr+d+r}F'
\end{array}
$$
where
$$
F'=x_r\Omega+\frac{f(y_r,z_ry_r^r)}{y_r^r},
$$
if $\frac{f(y_{r-1},z_{r-1}y_{r-1}^{r-1})}{y_{r-1}^{r-1}}$ is not a unit series. Thus for $r$ sufficiently large, we have that $F'$ is a unit, so that a form 6) holds at $p_r$.

\end{proof}

\begin{Lemma}\label{LemmaE} Suppose that $X$ is 2-prepared and that $C$ is a permissible curve on $X$. Suppose that $q\in C$ is a point with $\sigma_D(q)=0$ which has a form 1), 4) or 6)
of Lemma \ref{LemmaC}. Let $\pi_1:X_1\rightarrow X$ be the blow up of $C$. Then $X_1$ is 3-prepared in a neighborhood of $\pi_1^{-1}(q)$. Further, $\sigma_{D_1}(q_1)=0$ for all 
$q_1\in\pi_1^{-1}(q)$.
\end{Lemma}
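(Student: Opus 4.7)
The plan is to reduce the 3-preparation statement to the pointwise vanishing $\sigma_{D_1}(q_1)=0$ for each $q_1\in\pi_1^{-1}(q)$, and then to verify this vanishing by direct substitution in each of the three local forms at $q$ provided by Lemma \ref{LemmaC}.

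Since $C$ is permissible, $\pi_1:X_1\to X$ is a permissible blow up, so $D_{X_1}$ is SNC and $X_1$ is 1-prepared with respect to $f\circ\pi_1$ by the discussion preceding Definition \ref{Prep}. By the first clause of Definition \ref{3-prep}, a point is 3-prepared whenever $\sigma_{D_1}$ vanishes there, no further conditions being needed. Upper semicontinuity of $\sigma_{D_1}$ then implies that, once we establish $\sigma_{D_1}\equiv 0$ on the closed set $\pi_1^{-1}(q)$, the open set $\{\sigma_{D_1}=0\}$ contains $\pi_1^{-1}(q)$ and hence an open neighborhood of it; every point of this neighborhood is 3-prepared. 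Thus it suffices to show $\sigma_{D_1}(q_1)=0$ at each point $q_1\in\pi_1^{-1}(q)$.

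For this I would work chart by chart in the blow up of $C$, in each of Lemma \ref{LemmaC}'s cases 1), 4), 6). In case 1), the two standard charts $x=x_1,\;y=x_1y_1$ and $x=x_1y_1,\;y=y_1$ transform $u=x^a$, $v=P(x)+x^bz$ into a 1-point form with $F=z$ in the first chart, and in the second chart into a 2-point form with $F=z$ at the origin (plus 1-point forms at the remaining points of $\pi_1^{-1}(q)$ after absorbing a unit factor in the transverse variable into the permissible parameter). In case 4), the charts $x=x_1,\;z=x_1z_1$ and $x=x_1z_1,\;z=z_1$ transform $u=(x^ay^b)^l$, $v=P(x^ay^b)+x^cy^dz$ into a 2-point form with $F=z_1$ in the first chart, and at the new 3-point $x_1=y=z_1=0$ of the second chart into the prepared 3-point form $u=(x_1^ay^bz_1^a)^l$, $v=P(x_1^ay^bz_1^a)+x_1^cy^dz_1^{c+1}$ with $F$ a unit. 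Case 6), with $F=1$ and $ad-bc\neq 0$, is parallel to case 4); here $ad-bc\neq 0$ is precisely what guarantees the rank-$2$ condition on the matrix $A$ at the new 3-point in the formula following (\ref{li3}). In every case $\sigma_{D_1}=0$ is then read off directly from the order formulae displayed after (\ref{li3}).

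The only subtle bookkeeping is at the 1-points of $\pi_1^{-1}(q)$ lying off the origin of a chart, where after naive substitution the obvious parameter carries a unit monomial in a transverse variable. This is handled by the standard device used in the proof of Lemma \ref{Torgood}: one introduces a new formal parameter equal to the old parameter times an appropriate fractional power of a unit series, after which the expression is of the required permissible shape and $\sigma_{D_1}=0$ is immediate. I do not expect any conceptual obstruction beyond this bookkeeping.
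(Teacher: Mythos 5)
Your proposal is correct and follows essentially the same route as the paper: the paper's proof is exactly the direct substitution of the local equations of the blow up of $C$ into the forms 1), 4) and 6) of Lemma \ref{LemmaC}, which you carry out chart by chart, and your preliminary reduction (a point with $\sigma_{D_1}=0$ is 3-prepared by Definition \ref{3-prep}, and the neighborhood statement follows from upper semicontinuity of $\sigma_{D_1}$) is the implicit content of the paper's one-line argument. No gaps; your chart computations, including the rank-2 verification at the new 3-point via $ad-bc\neq 0$, match what the paper leaves to the reader.
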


\begin{proof} The conclusions follow from substitution of local equations of the blow up of $C$ into the forms 1), 4) and 6) of Lemma \ref{LemmaC}.
\end{proof}

\begin{Proposition}\label{Step3.1}  Suppose that $X$ is 2-prepared.  Then there exists a  sequence of permissible blow ups  $\pi_1:X_1\rightarrow X$, 
such that $X_1$ is 3-prepared. We further have that $\sigma_D(p_1)\le \sigma_D(p)$ for all $p\in X$ and $p_1\in\pi_1^{-1}(p)$.
\end{Proposition}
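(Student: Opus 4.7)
The plan is to induct on the number of 1-points at which 3-preparation fails. First I would apply Lemma~\ref{2-prep1} to reduce to the case where $X$ is 3-prepared everywhere except at a finite set $\{p_1,\ldots,p_s\}$ of 1-points; since the blow ups involved are blow ups of 2-curves, they are toroidal and Lemma~\ref{Torgood} already delivers the required inequality on $\sigma_D$. I then induct on $s$: it suffices to eliminate one bad point $p=p_1$ by a sequence of permissible blow ups while preserving 3-preparation at the remaining $p_j$ and at all already prepared points, since the resulting variety then has at most $s-1$ bad 1-points and the inductive hypothesis applies.

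To handle $p$, apply Lemma~\ref{1-ptprep} with $Y=\{p_2,\ldots,p_s\}$ to obtain an affine neighborhood $U$, an \'etale map to $\mbox{Spec}\,\mathfrak k[\overline x,\overline y,\overline z]$, and a nonsingular curve $C\subset X$ through $p$ cut out in $U$ by $\overline x=\overline y=0$. Part~5) of that lemma ensures that $C$ avoids the other $p_j$ and all 3-points of $D$, meets 2-curves transversally at prepared points, and intersects $\mbox{Sing}_1(X)$ transversally at general points of its components; in particular $\sigma_D(q)=0$ for every $q\in C\setminus\{p\}$, and the classification of Lemma~\ref{LemmaC} applies at each such $q$. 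Fix parameters as in clause~5vii) of Lemma~\ref{1-ptprep} so that (\ref{eq1}) holds at $p$, set $x^{*}=\overline x$, and let $r$ be the integer prescribed in Lemma~\ref{algpointprep}.

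Now carry out the blow ups in two stages. First, apply Lemma~\ref{LemmaD} to the finitely many points of $C$ lying in forms 3) or 5) of Lemma~\ref{LemmaC}, so that after these preliminary point blow ups every point of the strict transform of $C$ away from $p$ lies in form 1), 4) or 6). Second, carry out the sequence of permissible curve blow ups prescribed by Lemma~\ref{algpointprep}, the first of which is the blow up of $C$ and the remaining of which, obtained by base change from the sequence of point blow ups of $T_0^{*}$, are blow ups of nonsingular curves supported over $p$. Above $p$, Lemma~\ref{algpointprep} produces one of the 3-prepared forms (\ref{eq2*})--(\ref{eq4*}) at every point of the preimage, so 3-preparation holds there. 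Along $C\setminus\{p\}$ only the first blow up is relevant, and Lemma~\ref{LemmaE} shows that it preserves 3-preparation at every point above such a $q$. Points outside $C$ are untouched, so the preparation at the remaining $p_j$ and at all other prepared points is undisturbed. The inequality $\sigma_D(p_1)\le\sigma_D(p)$ follows from Lemma~\ref{Torgood} at the toroidal steps and from direct inspection of the explicit monomial forms produced by Lemmas~\ref{algpointprep}, \ref{LemmaD} and \ref{LemmaE} at the curve blow ups.

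The main difficulty is bridging the gap between the formal local recipe of Lemma~\ref{algpointprep}, which is formulated in the completed local ring at $p$, and honest permissible blow ups of nonsingular curves on the global variety $X$ that do not spoil preparation anywhere else. The delicate choice of $C$ in Lemma~\ref{1-ptprep}---transversal to $\mbox{Sing}_1(X)$, avoiding 3-points and the other bad 1-points, and cut out by globally defined functions---combined with the local analysis of Lemmas~\ref{LemmaC}--\ref{LemmaE}, is exactly what makes this bridging possible.
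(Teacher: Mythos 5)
Your overall strategy is the paper's: reduce to finitely many bad 1-points by Lemma \ref{2-prep1}, choose the curve $C$ of Lemma \ref{1-ptprep} through one bad point $p$, blow up sections over $C$ as dictated by Lemma \ref{algpointprep}, control the rest of $C$ with Lemmas \ref{LemmaC}--\ref{LemmaE}, and induct. However, there are two genuine gaps.

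First, your claim that $\sigma_D(q)=0$ for every $q\in C\setminus\{p\}$ is false, and it hides the main difficulty along $C$. Condition 5vi) of Lemma \ref{1-ptprep} only says that $C$ meets $\mbox{Sing}_1(X)\setminus\{p\}$ \emph{transversally at general points} of curves in $\mbox{Sing}_1(X)$; since $C$ lies in the component $E$ and $\mbox{Sing}_1(X)$ typically contains curves in $E$, such intersection points do occur, and at them $\sigma_D>0$. Moreover the centers produced by base change from the point blow ups of $T_0^*$ are not ``supported over $p$'': the fiber of $V_1\rightarrow T_1^*$ over a point lying above the origin is a section over the \emph{whole} curve $C$, so every one of these blow ups passes through the fibers over every point of $C$, including the points of $\Sigma_1=C\cap\mbox{Sing}_1(X)$ other than $p$. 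Lemma \ref{LemmaE} is not applicable at those points, so your argument does not show that the result is 3-prepared (or even that $\sigma_D$ does not go up) above them. The paper deals with this by Lemma \ref{LemmaB}, which shows that at a general point $p_i$ of a curve in $\mbox{Sing}_1(X)$ the globally defined transversal germ $\overline x=\overline y=0$ still yields a form (\ref{eq1}), and then by applying Lemma \ref{algpointprep} \emph{simultaneously} at all points $p_0=p,p_1,\ldots,p_r$ of $\Sigma_1$, taking $t=\max\{r(p_i)\}$.

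Second, setting $x^*=\overline x$ does not satisfy the hypothesis of Lemma \ref{algpointprep}. The permissible parameter at $p$ is $x=\tau^{1/a}\overline x$ with $\tau\in\Gamma(U,\mathcal O_X)$ a unit, so with your choice $\overline\gamma=\tau^{1/a}$, which is merely a unit series and in general is not congruent to $1$ modulo $m_p^{r}\hat{\mathcal O}_{X,p}$; the lemma's conclusion is exactly what fails without that congruence, since the error terms $h$ must lie in $m_p^r$ to be absorbed into the units $\overline\tau_i$. The bridging device you describe as the main difficulty is precisely the paper's choice of $\lambda\in\Gamma(U,\mathcal O_X)$ with $\lambda\equiv\tau^{-1/a}\mbox{ mod }m_{p_i}^t\hat{\mathcal O}_{X,p_i}$ at \emph{all} points of $\Sigma_1$, followed by $x^*=\lambda^{-1}\overline x$, so that $\overline\gamma=\tau^{1/a}\lambda\equiv 1$ to the required order simultaneously at every point where Lemma \ref{algpointprep} must be invoked; without this step the formal recipe cannot be transported to the algebraic blow ups of sections over $C$, and the construction above $p$ (and above the other points of $\Sigma_1$) is not justified.
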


\begin{proof} Let $T$ be the points $p\in X$ such that $X$ is not 3-prepared at $p$. By Lemmas \ref{2-prep1}
and \ref{Torgood}, after we perform a sequence of blow ups of 2-curves, we may assume that $T$ is a finite set consisting of 1-points of $D$.

Suppose that $p\in T$. Let $T'=T\setminus\{p\}$. Let $U=\mbox{Spec}(R)$ be the affine neighborhood of $p$ in $X$ and let $C$ be the curve in $X$ of the conclusions of Lemma \ref{1-ptprep} (with $Y=T'$), so that $C$ has local equations $\overline x=\overline y=0$ in $U$.

Let $\Sigma_1=C\cap \mbox{Sing}_1(X)$. $\Sigma_1=\{p=p_0,\ldots,p_r\}$ is the union of $\{p\}$ and a finite set of general points of curves in $\mbox{Sing}_1(X)$, which must be 1-points. 
We have that $\Sigma_1\subset U$. Let
$$
\Sigma_2=\{q\in C\cap U\mid \sigma_D(q)=0\mbox{ and a form 2) of Lemma \ref{LemmaC} holds at $q$}\}.
$$
$\Sigma_2$ is a finite set by Lemma \ref{LemmaC}.  Let $\Sigma_3=C\setminus U$, a finite set of 1-points and 2-points which are
 prepared.

Set $U'=U\setminus \Sigma_2$. 
There exists a unit $\tau\in R$ and $a\in \ZZ_+$ such that $u=\tau\overline x^a$.

By 5 vi), 5 vii)  of Lemma \ref{1-ptprep} and Lemma \ref{LemmaB},  there exist $z_i\in \hat{\mathcal O}_{X,p_i}$ such that  for all $p_i\in \Sigma_1$, $x=\tau^{\frac{1}{a}}\overline x, \overline y, z_i$ are permissible parameters at $p_i$
giving a form (\ref{eq1}). 

Let $t=\mbox{max}\{ r(p_i)\mid 0\le i\le r\}$, where $r(p_i)$ are calculated from (\ref{eq21})) of  Lemma \ref{algpointprep}. There exists $\lambda\in R$ such that $\lambda\equiv \tau^{-\frac{1}{a}}\mbox{ mod }m_{p_i}^t\hat{\mathcal O}_{X,p_i}$ for $0\le i\le r$. Let $x^*=\lambda^{-1}\overline x$, $\overline\gamma=\tau^{\frac{1}{a}}\lambda$. Then $x=\tau^{\frac{1}{a}}\overline x=\overline \gamma x^*$ with $\overline\gamma\equiv 1\mbox{ mod }m_{p_i}^t\hat{\mathcal O}_{X,p_i}$ for $0\le i\le r$.
Let $U'=U\setminus \Sigma_2$.

Let $T_0^*=\mbox{Spec}(\mathfrak k[x^*,\overline y])$, and let $T_1^*\rightarrow T_0^*$ be a sequence of blow ups of points
above $(x^*,\overline y)$ such that the conclusions of Lemma \ref{algpointprep} hold on $ U_1'= U'\times_{T_0^*}T_1^*$ above all
$p_i$ with $0\le i\le r$.  The projection
$\lambda_1: U_1'\rightarrow  U'$ is a sequence of blow ups of sections over $C$.
$\lambda_1$ is permissible  and $\lambda_1^{-1}(C\cap(U'\setminus \Sigma_1))$ is prepared  by Lemma \ref{LemmaE}.

All points of $\Sigma_2\cup \Sigma_3$ are  prepared. Thus by Lemma \ref{LemmaA},
Lemmas \ref{LemmaD} and Lemma \ref{LemmaE}, by interchanging some blowups of points above $\Sigma_2\cup\Sigma_4$ between blow ups of sections over $C$, we may extend $\lambda_1$ to a sequence of permissible blow ups over $X$ to obtain the desired sequence of permissible blow ups $\pi_1:X_1\rightarrow X$ such that $X_1$
 is 2-prepared. $\pi_1$ is an isomorphism over $T'$, $X_1$
 is 3-prepared over $\pi_1^{-1}(X_1\setminus T')$, and $\sigma_D(p_1)\le\sigma_D(p)$ for all $p\in X_1\setminus T'$.

By induction on $|T|$, we may iterate this procedure a finite number of times to obtain the conclusions of Proposition \ref{Step3.1}.

\end{proof}

The following proposition is proven in a similar way.

\begin{Proposition}\label{local3prep} Suppose that $X$ is 1-prepared and $D'$ is a union of irreducible components of $D$. Suppose that there exists a neighborhood $V$ of $D'$ such that $V$ is 2-prepared and $V$ is 3-prepared at all 2-points and 3-points of $V$.

Let $A$ be a finite set of 1-points of $D'$, such that $A$ is contained in $\mbox{Sing}_1(X)$ and $A$  contains the points where $V$ is not 3-prepared, and let
$B$ be a finite set of 2-points of $D'$. Then there exists a sequence of permissible blow ups
$\pi_1:X_1\rightarrow X$ such that
\begin{enumerate}
\item[1)] $X_1$ is 3-prepared in a neighborhood of $\pi_1^{-1}(D')$.
\item[2)] $\pi_1$ is an isomorphism over $X_1\setminus D'$.
\item[3)] $\pi_1$ is an isomorphism in a neighborhood of $B$.
\item[4)] $\pi_1$ is an isomorphism over generic points of 2-curves on $D'$ and over 3-points of $D'$.
\item[5)] Points on the intersection of the strict transform of $D'$ on $X_1$ with $\pi_1^{-1}(A)$ are 2-points of $D_{X_1}$.
\item[6)] $\sigma_D(p_1)\le \sigma_D(p)$ for all $p\in X$ and $p_1\in\pi_1^{-1}(p)$.
\end{enumerate}
\end{Proposition}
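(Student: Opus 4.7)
The plan is to imitate the proof of Proposition \ref{Step3.1}, but with the choice of auxiliary curve $C$ constrained so that the blow-ups remain local to $D'$ and avoid the set $B$. First, using Lemma \ref{2-prep1} together with Lemma \ref{Torgood}, I apply blow-ups only of those 2-curves of $V$ whose generic point lies on $D'$ and where $\sigma_D = \infty$ or where 3-preparation fails; this is an isomorphism over $B$ (since $V$ is already 3-prepared at 2-points) and over generic points of 2-curves on $D'$, and it reduces the non-3-prepared locus to a finite collection of 1-points, which by hypothesis is contained in $A$.

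Now, for each $p \in A$, I invoke Lemma \ref{1-ptprep} applied with the finite set $Y = B \cup (A \setminus \{p\})$ together with any isolated 1-point or 2-point of $\mathrm{Sing}_1(X)$ lying off $D'$. The resulting curve $C \subset U$ has local equations $\overline{x} = \overline{y} = 0$, where $\overline{x} = 0$ cuts out the component $E$ of $D'$ through $p$, so $C \subset E \subset D'$, $C \cap B = \emptyset$, and $C$ meets $\mathrm{Sing}_1(X) \setminus \{p\}$ only transversally at general 1-points of curves on $D'$. Since $E \subset D'$, the projection $T_0^* = \mathrm{Spec}(\mathfrak{k}[x^*, \overline{y}])$ and the sequence of blow-ups of Lemma \ref{algpointprep} lift to a sequence of permissible blow-ups $\pi_1 : X_1 \to X$ whose centers all lie in the preimage of $C \subset D'$. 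This is automatically an isomorphism away from $D'$, over $B$, and over the generic points of 2-curves and over 3-points of $D'$, giving properties 2), 3), 4). Property 5) follows because each blow-up center lies inside the component $E$, so the exceptional divisor meets the strict transform of $E$ along a 2-curve, whence any point of the intersection of this strict transform with $\pi_1^{-1}(A)$ is automatically a 2-point; property 6) follows from Lemma \ref{Torgood} applied to the blow-ups that are toroidal for $D$ together with the explicit form-by-form check already carried out in Lemma \ref{algpointprep}, and property 1) follows by combining Lemmas \ref{LemmaA}, \ref{LemmaD}, and \ref{LemmaE} to handle the residual prepared points on $C$ exactly as in the proof of Proposition \ref{Step3.1}.

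The principal obstacle, as in the global argument, is to arrange the choice of $C$ globally so that blowing up achieves 3-preparation above $p$ without touching the protected loci. The key is the flexibility in Lemma \ref{1-ptprep}: because the base point free linear system $|\phi^*(s(H+H'))-A|$ used there remains base point free after requiring the section to vanish on the finite set $B$ and to avoid finitely many 2-curves of $D'$ and 3-points of $D'$ (all of which impose a finite number of linear conditions on the very ample system), a general member still produces a curve $C \subset E$ with all the transversality and avoidance properties demanded. Once $C$ is in hand, the analysis is formally identical to the end of the proof of Proposition \ref{Step3.1}: induction on the cardinality of the set of 1-points where 3-preparation still fails finishes the proof.
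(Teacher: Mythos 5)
Your proposal is correct and follows essentially the same route as the paper, which proves this proposition ``in a similar way'' to Proposition \ref{Step3.1}: for each $p\in A$ choose, via Lemma \ref{1-ptprep} with $Y\supseteq B\cup(A\setminus\{p\})$, a general curve $C$ on the component $E$ of $D'$ through $p$, run the blow ups of Lemma \ref{algpointprep} over $C$ patched with Lemmas \ref{LemmaA}, \ref{LemmaD} and \ref{LemmaE} at the finitely many prepared points of $C$, and induct on the number of non-3-prepared 1-points. The only slips are cosmetic and do not affect the argument: the general section should \emph{avoid} $B$ rather than vanish on it, $C$ need only meet 2-curves transversally at prepared points rather than avoid them, and only the first center (and the auxiliary points) lie in $E$ while the later centers are sections over $C$ in the exceptional locus, the 2-point conclusion 5) still following because the strict transform of $E$ meets exactly one exceptional component above $p$.
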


\section{Reduction of $\sigma_D$ above a 3-prepared point.}\label{Section4}

\begin{Theorem}\label{1-pointres}  Suppose that $p\in X$  is a 1-point such that $X$ is 3-prepared at $p$, and $\sigma_D(p)>0$. Let $x,y,z$ be permissible parameters at $p$ giving  a form (\ref{eq3})  at $p$. 
Let $U$ be an \'etale cover of an affine neighborhood  of $p$ in which $ x,  y,  z$ are uniformizing parameters. Then $x  z =0$ gives a toroidal structure $\overline D$ on $U$.
Let $I$ be the ideal in $\Gamma(U,\mathcal O_X)$ generated by $ z^m$, $ x^{r_m}$ if $ \tau_m\ne 0$,
and by 
$$
\{  x^{r_i} z^{m-i}\mid 2\le i\le m-1\mbox{ and } \tau_i\ne 0\}.
$$
Suppose that $\psi:U'\rightarrow U$ is a toroidal morphism with respect to  $\overline D$ such that $U'$ is nonsingular and $I\mathcal O_{U'}$
is locally principal. Then (after possibly replacing $U$ with a smaller neighborhood of $p$) $U'$ is 2-prepared and $\sigma_D(q)<\sigma_D(p)$ for all $q\in U'$. 

 There is (after possibly replacing $U$ with a smaller neighborhood of $p$) a unique, minimal toroidal morphism $\psi:U'\rightarrow U$ with respect to $\overline D$ with has
 the property that $U'$ is nonsingular, 2-prepared and $\Gamma_D(U')<\sigma_D(p)$.
This map $\psi$  factors as a sequence of permissible blowups 
$\pi_i:U_i\rightarrow U_{i-1}$ of  sections $C_i$ over the two curve $C$ of $\overline D$. $U_i$ is 1-prepared for $U_i\rightarrow S$. 
We have that the curve $C_i$ blown up in $U_{i+1}\rightarrow U_i$ is in $\mbox{Sing}_{\sigma_D(p)}(U_i)$ if $C_i$ is not a 2-curve of $D_{U_i}$, and that $C_i$ is in $\mbox{Sing}_1(U_i)$ if $C_i$ is  a 2-curve of $D_{U_i}$.
\end{Theorem}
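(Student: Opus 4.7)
The plan is an explicit local calculation at each $q'\in U'$ combined with a toric principalization argument. Because $\psi$ is toroidal with respect to $\overline D=\{xz=0\}$, at each $q'$ I may choose formal local coordinates $\xi,\eta,\zeta$ at $q'$ with $\eta=y$ and
$$x=\xi^{a_{11}}\zeta^{a_{12}},\qquad z=\xi^{a_{21}}\zeta^{a_{22}},\qquad a_{11}a_{22}-a_{12}a_{21}=\pm 1.$$
If $q=\psi(q')$ does not lie on $C=\{x=z=0\}$, then either $z$ or $x$ is a unit at $q$, and (\ref{eq3}) already exhibits a unit term of $F$ (namely $\tau_0z^m$, or one of $\tau_mx^{r_m}$, $\tau_{m-1}x^{r_{m-1}}z$, according as $\tau_m$ is or is not nonzero); thus $\sigma_D(q')=0$ automatically, and the entire burden is at points lying over $C$.

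Over $C$, substituting into (\ref{eq3}) yields $F=\sum_{j\in J}\tilde\tau_j\,\xi^{E_j}\zeta^{F_j}$ with $J=\{j\in\{0,2,\ldots,m\}:\tau_j\ne 0\}$ and $\tilde\tau_j(0,\eta,0)=\tau_j(0,\eta,0)$ a unit in $\mathfrak k[[\eta]]$ for $j\ne m$ and of order $1$ for $j=m$. Nonsingularity of $(a_{ij})$ ensures the exponent pairs $(E_j,F_j)$ remain pairwise distinct. Principality of $I\mathcal O_{U',q'}$ means the lower-left vertex $(A,B):=(\min_jE_j,\min_jF_j)$ is achieved by a unique $j_0\in J$, so $F=\xi^A\zeta^B\tilde F$ with $\tilde F$ having a unique $(0,0)$-term equal to $\tilde\tau_{j_0}(0,\eta,0)$. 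If $q'$ is a 2-point of $D_{U'}$ (both $a_{11},a_{12}>0$), the expression rearranges into the 2-point form (\ref{2-point}) with $F$ replaced by $\tilde F$; since $\mathrm{ord}_\eta\tilde F(0,\eta,0)\le 1$, one obtains $\sigma_D(q')\le 0<m-1$. If $q'$ is a 2-point of $\overline D_{U'}$ but a 1-point of $D_{U'}$ (exactly one of $a_{11},a_{12}$ vanishes), a parallel absorption of the exceptional monomial $\xi^A$ into the $x^b$-factor of (\ref{1-point}) gives the same bound.

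The delicate case is that $q'$ is a 1-point of $\overline D_{U'}$ lying over $C$, i.e.\ on an exceptional divisor but off the strict transforms of $\{x=0\}$ and $\{z=0\}$: here $\zeta$ is a unit at $q'$, say $\zeta=\zeta_0+\zeta'$ with $\zeta_0\ne 0$, and $\tilde F(0,\eta,\zeta)$ is a polynomial in $\zeta$ whose coefficient of $\zeta^{m-1}$ vanishes because (\ref{eq3}) has no $\tau_1z^{m-1}$-term. If $A=m$ then $j=0\in J$ and the Taylor coefficient of $(\zeta-\zeta_0)^{m-1}$ in $\tilde F(0,\eta,\zeta_0+\zeta')$ equals $m\zeta_0\tau_0(0,\eta,0)$, a nonzero element of $\mathfrak k[[\eta]]$; hence $\mathrm{ord}_{(\eta,\zeta')}\tilde F(0,\eta,\zeta_0+\zeta')\le m-1$. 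If $A<m$ then $0\notin J$ and $\tilde F(0,\eta,\zeta)$ has $\zeta$-degree at most $m-2$, so vanishing at any $\zeta_0$ has order $\le m-2$. Either way $\sigma_D(q')\le m-2<\sigma_D(p)$, and combined with the earlier cases $\Gamma_D(U')<\sigma_D(p)$, and $U'$ is 1-prepared by Lemma \ref{Torgood} and 2-prepared by the case analysis.

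For the existence and uniqueness of the minimal $\psi$, I would appeal to toric geometry: the monomial ideal $I$ in the variables $x,z$ admits a canonical minimal equivariant principalization, given by the unimodular refinement of the fan of $U$ dictated by the lower boundary of the Newton polygon of $I$. Pulled back to $U$, this realizes $\psi$ as the asserted sequence of permissible blow ups whose first center is $C$ itself and whose subsequent centers $C_i$ are sections of the successive exceptional $\mathbb P^1$-bundles over $C$. That each $C_i$ lies in $\mathrm{Sing}_{\sigma_D(p)}(U_i)$ when not a 2-curve, and in $\mathrm{Sing}_1(U_i)$ when a 2-curve, follows by applying the local analysis above to the partial principalization $U_i\to U$: so long as the residual Newton polygon retains more than one lower-left vertex along $C_i$, the residual $F$ keeps the order needed for $\sigma_D$ to remain $\ge m-1$ or $\ge 1$ respectively. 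The principal obstacle is the 1-point-over-$C$ case, whose resolution depends entirely on the missing $z^{m-1}$-coefficient in the 3-prepared form (\ref{eq3}); this Tschirnhaus-type normalization, and an analogous one in higher dimension, is what one would need to extend the result to morphisms from $n$-folds to surfaces.
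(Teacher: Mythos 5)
Your first half is, in substance, the paper's own argument: the same monomial substitutions at the three kinds of points of $\psi^{-1}(\overline D)$, the same exploitation of the absent $z^{m-1}$-term of (\ref{eq3}) to force order $\le m-1$ at 1-points of the exceptional locus, and the same use of $\mbox{ord}(\tau_m(0,y,0))=1$ when $x^{r_m}$ dominates. (Two small imprecisions: the vanishing of $\sigma_D$ over $U\setminus C$ is not because $F$ ``exhibits a unit term'' but because $\mbox{Sing}_1(U)$ is the curve $x=z=0$ after shrinking $U$, as in Lemma \ref{SingLoc}; and at 2-points you must also check that the rearranged expression is normalized, i.e.\ that the $F$-part has no terms which are powers of the monomial defining $u$ -- this is what licenses the table computing $\sigma_D$ and is exactly why the paper remarks that the substitution is monomial. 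Both are easily repaired.)

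The genuine gap is in your last paragraph. The minimal toroidal morphism of the theorem is \emph{not} the minimal equivariant principalization of $I$. Principality of $I$ is sufficient for the drop of $\sigma_D$ (that is the first statement), but it is not necessary along the one 2-curve of $\overline D'$ lying on the strict transform of $\{z=0\}$: there, with $x=x_1$, $z=x_1^{b_1}z_1$ (the paper's third case, statement (\ref{state3})), one has $\sigma_D=\sigma_D(p)$ exactly while the $z^m$-generator strictly dominates, and $\sigma_D<\sigma_D(p)$ as soon as some $x^{r_i}z^{m-i}$ or $x^{r_m}$ attains the minimal $x_1$-order -- long before $I$ becomes principal there. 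Concretely, take $m=3$, $F=\tau_0z^3+\tau_2xz+\tau_3x^{100}$, so $I=(z^3,xz,x^{100})$ and $\sigma_D(p)=2$. After the single blow up of $C$, along the intersection of the exceptional divisor with the strict transform of $\{z=0\}$ the pulled back ideal $(x_1^3z_1^3,x_1^2z_1,x_1^{100})$ is not principal, yet factoring out $x_1^2$ leaves the order-one term $\tau_2z_1$, so $\sigma_D=0$ along that curve. The minimal principalization must nevertheless keep subdividing on that side (inserting the rays $(1,2),\ldots,(1,99)$), i.e.\ it blows up many curves on which $\sigma_D$ has already dropped to $0$; it is therefore not minimal for the property $\Gamma_D(U')<\sigma_D(p)$, and its centers violate the theorem's assertion that every $C_i$ which is not a 2-curve of $D_{U_i}$ lies in $\mbox{Sing}_{\sigma_D(p)}(U_i)$. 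For the same reason your stopping criterion (``more than one lower-left vertex of the residual Newton polygon keeps $\sigma_D\ge m-1$, resp.\ $\ge1$'') fails precisely on that curve. What is needed instead -- and what the paper's proof supplies via (\ref{state2}) and (\ref{state3}) -- is the pair of converse statements: along a 2-curve of $D_{U'}$, non-principality of $I$ forces $\sigma_D=\infty$ (so such centers are in $\mbox{Sing}_1$), while along the 2-curve on the strict transform of $\{z=0\}$, $\sigma_D=\sigma_D(p)$ precisely while $z^m$ strictly dominates. These give the correct stopping rule (hence the correct minimal $\psi$, the one also underlying the construction of $\omega$), the domination argument needed for uniqueness and minimality, and the stated membership of the centers in $\mbox{Sing}_{\sigma_D(p)}(U_i)$ or $\mbox{Sing}_1(U_i)$; none of this is recovered by appealing to the canonical principalization of $I$.
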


\begin{proof} Suppose that $\psi:U'\rightarrow U$ is toroidal for $\overline D$ and $U'$ is nonsingular. 
Let $\overline D'=\psi^{-1}(\overline D)$.

The set of 2-curves of 
$\overline D'$ is the disjoint union of the 2-curves of $D_{U'}$ and the 2-curve which is the intersection of the strict transform of the surface $ z=0$ on $U'$ with $D_{U'}$. $\psi$ factors as a sequence of blow ups of 2-curves of (the preimage of) $\overline D$. We will  verify the following three statements,
from which the conclusions of the  theorem follow.

\begin{equation}\label{state1}
\begin{array}{l}
\mbox{If $q\in\psi^{-1}(p)$ and $I\mathcal O_{U',q}$ is principal, then $\sigma_D(q)<\sigma_D(p)$.}\\
\mbox{In particular, $\sigma_D(q)<\sigma_D(p)$ if $q$ is a 1-point of $\overline D'$.}
\end{array}
\end{equation}

\begin{equation}\label{state2}
\begin{array}{l}
\mbox{If $C'$ is a 2-curve of $D_{U'}$, then $U'$ is prepared at  $q= C'\cap\psi^{-1}(p)$}\\
\mbox{if and only if $\sigma_D(q)<\infty$}\\
\mbox{if and only if $I\mathcal O_{U',q}$ is principal}\\
\mbox{if and only
if $U'$ is prepared at all $q'\in C'$ in a neighborhood of $q$.}
\end{array}
\end{equation}

\begin{equation}\label{state3}
\begin{array}{l}
\mbox{If $C'$ is the 2-curve of $\overline D'$ which is the intersection of $D_{U'}$ with the strict transform}\\
\mbox{of $\tilde z=0$ in $U'$, then $\sigma_{D}(q)\le \sigma_D(p)$ if $q=C'\cap \psi^{-1}(p)$, and $\sigma_D(q')=\sigma_D(q)$}\\ 
\mbox{for  $q'\in C'$ in a neighborhood of $q$.}
\end{array}
\end{equation}

Suppose that $q\in\psi^{-1}(p)$ is a 1-point for $\overline D'$. Then $I\hat{\mathcal O}_{U',q}$ is principal.
At $q$, we have permissible parameters $x_1,y,z_1$ defined by
\begin{equation}\label{eq71}
x=x_1^{a_1}, z=x_1^{b_1}(z_1+\alpha)
\end{equation}
for some $a_1,b_1\in\ZZ_+$ and $0\ne\alpha\in \mathfrak k$. Substituting into (\ref{eq3}), we have
$$
u=x_1^{aa_1},
v=P(x_1^{a_1})+x_1^{ba_1}G
$$
where 
$$
G=\tau_0x_1^{b_1m}(z_1+\alpha)^m + \tau_2x_1^{a_1r_2+b_1(m-2)}(z_1+\alpha)^{m-2}+
\cdots+\tau_{m-1}x_1^{a_1r_{m-1}+b_1}(z_1+\alpha)+\tau_mx_1^{a_1r_m}.
$$
Let $x_1^s$ be a local generator of $I\hat{\mathcal O}_{U',q}$. Let $G'=\frac{G}{x_1^s}$.

If $z^m$ is a local generator of $I\hat{\mathcal O}_{U',q}$, then $G'$ has an expansion
$$
G'=\tau'(z_1+\alpha)^m+g_2(z_1+\alpha)^{m-2}+\cdots+g_{m-1}(z_1+\alpha)+g_m
+x_1\Omega_1+y\Omega_2
$$
where $0\ne \tau'=\tau(0,0,0)\in \mathfrak k$, $g_2,\ldots,g_m\in \mathfrak k$ and $\Omega_1,\Omega_2\in \hat{\mathcal O}_{U',q}$. We have $\mbox{ord}(G'(0,0,z_1))\le m-1$.
Setting $F'=G'-G'(x_1,0,0)$ and $P'(x_1)=P(x_1^{a_1})+x_1^{ba_1+b_1m}G'(x_1,0,0)$,
we have an expression
$$
u=x_1^{aa_1},
v=P'(x_1)+x_1^{ba_1+b_1m}F'
$$
of the form of (\ref{1-point}). Thus $U'$ is 2-prepared at $q$ with $\sigma_{D'}(q)<m-1=\sigma_D(p)$.

Suppose that $z^m$ is not a local generator of $I\hat{\mathcal O}_{U',q}$, but there exists some $i$ with
$2\le i\le m-1$ such that $x^{r_i}z^{m-i}$ is a local generator of $I\hat{\mathcal O}_{U',q}$. Let
$h$ be the smallest $i$ with this property. Then $G'$ has an expression
$$
G'=g_h(z_1+\alpha)^{m-h}+\cdots+g_m+x_1\Omega_1+y_1\Omega_2
$$
for some $g_i\in \mathfrak k$ with $g_h\ne 0$ and $\Omega_1,\Omega_2\in\hat{\mathcal O}_{U',q}$. As in the previous case, we have that $U'$ is 2-prepared at $q$ with $\sigma_{D}(q)< m-h-1<m-1=\sigma_D(p)$.

Suppose that $z^m$ is not a local generator of $I\hat{\mathcal O}_{U',q}$ and
$x^{r_i}z^{m-i}$ is not a local generator of $I\hat{\mathcal O}_{U',q}$ for $2\le i\le m-1$. Then $x_1^{r_m}$ is a local generator of $I\mathcal O_{U',q}$, and we have an expression
$$
G'=\Lambda+x_1\Omega_1,
$$
where $\Lambda(x_1,y,z_1)=\tau_m(x_1^{a_1},y,x_1^{b_1}(z_1+\alpha))$ and $\Omega_1\in\hat{\mathcal O}_{U',q}$.
Then 
$$
\mbox{ord }\Lambda(0,y,0)=\mbox{ord }\tau_m(0,y,0)=1,
$$
and we have that $U'$ is prepared at $q$.

Now suppose that $q\in\psi^{-1}(p)$ is a 2-point for $D_{U'}$. We have permissible parameters $x_1,y,z_1$ in
$\hat{\mathcal O}_{U',q}$ such that
\begin{equation}\label{eq72}
x=x_1^{a_1}z_1^{b_1},
z=x_1^{c_1}z_1^{d_1}
\end{equation}
with $a_1,b_1>0$ and $a_1d_1-b_1c_1=\pm 1$. Substituting into (\ref{eq3}), we have
$$
u=x_1^{a_1a}z_1^{b_1a},
v=P(x_1^{a_1}z_1^{b_1})+x_1^{a_1b}z_1^{b_1b}G
$$
where 
$$
G=\tau_0x_1^{c_1m}z_1^{d_1m}+\tau_2x_1^{r_2a_1+c_1(m-2)}z_1^{r_2b_1+d_1(m-2)}+
\cdots+\tau_{m-1}x_1^{a_1r_{m-1}+c_1}z_1^{b_1r_{m-1}+d_1}+\tau_mx_1^{a_1r_m}z_1^{b_1r_m}.
$$

Let $C'$ be the 2-curve of $D_{U'}$ containing $q$. Since $\mbox{ord }(\tau_m(0,y,0))=1$ (if $\tau_m\ne 0$) we see that the three statements $\sigma_D(q)<\infty$,
$\sigma_D(q)=0$ and $I\mathcal O_{U',q}$ is principal are equivalent. 
Further, we have that $\sigma_D(q')=\sigma_D(q)$ for $q'\in C'$ in a neighborhood of $q$.

Suppose that  $I\mathcal O_{U',q}$ is principal and let $x_1^sz_1^t$ be a local generator of
$I\hat{\mathcal O}_{U',q}$. Let $G'=G/x_1^sz_1^t$. We have that
$$
u=(x_1^{a_1}z_1^{b_1})^a,\,\, v=P(x_1^{a_1}z_1^{b_1})+x_1^{a_1b+s}z_1^{bb_1+t}G'
$$ has the form (\ref{2-point}),
since we have made a monomial substitution in $x$ and $z$.  If $z^m$ or $x^{r_i}z^{m-i}$ for some $i<m$ is a local generator of $I\hat{\mathcal O}_{U',q}$, then $G'$ is a unit in $\hat{\mathcal O}_{U',q}$. If none
of $z^m$, $x^{r_i}z^{m-i}$ for $i<m$ are local generators of $I\hat{\mathcal O}_{U',q}$, then
$$
G'=\Lambda +x_1\Omega_1+z_1\Omega_2,
$$
where
$$
\Lambda(x_1,y_1,z_1)=\tau_m(x_1^{a_1}z_1^{b_1},y,x_1^{c_1}z_1^{d_1})
$$
and $\Omega_1,\Omega_2\in\hat{\mathcal O}_{U',q}$.
Thus
$$
\mbox{ord }\Lambda(0,y,0)=\mbox{ord }\tau_m(0,y,0)=1.
$$
We thus have that $U'$ is prepared at $q$.

The final case is when $q\in\psi^{-1}(p)$ is on the 2-curve $C'$ of $\overline D'$ which is the intersection of $D_{U'}$ with the strict transform of $ z=0$ in $U'$. Then there exist permissible parameters $x_1,y,z_1$ at $q$ such that
\begin{equation}\label{eq73}
x=x_1, z=x_1^{b_1}z_1
\end{equation}
for some $b_1\in\ZZ_+$. 
The equations $x_1=z_1=0$ are local equations of $C'$ at $q$. Let 
$$
s=\mbox{min}\{b_1m,r_i+b_1(m-i)\mbox{ with } \tau_i\ne 0\mbox{ for }2\le i\le m-1, r_m\mbox{ if }\tau_i\ne 0\}.
$$
We have an expression of the form (\ref{1-point}) at $q$,
$$
\begin{array}{lll}
u&=& x_1^a\\
v&=&P(x_1^a)+x_1^{ab+s}G'
\end{array}
$$
with
$$
G'=\tau_0x_1^{b_1m-s}z_1^m+\tau_2x_1^{r_2+b_1(m-2)-s}z_1^{m-2}+\cdots+
\tau_{m-1}x_1^{r_{m-1}+b_1-s}z_1+\tau_mx_1^{r_m-s}.
$$
We see that $\sigma_D(q)\le \sigma_D(p)$ (with $\sigma_D(q)<\sigma_D(p)$ if $s=r_i+b_1(m-i)$ for some $i$ with $2\le i\le m-1$ or $s=r_m$) and $\sigma_D(q')=\sigma_D(q)$ for $q'$ in a neighborhood of $q$ on $C'$.

Suppose that $I\mathcal O_{U',q}$ is principal. Then $x^{r_m}$ generates $I\hat{\mathcal O}_{U',q}$.
We have that $G'=x_1^{r_m}\Omega$ where $\Omega\in\hat{\mathcal O}_{U',q}$ satisfies $\mbox{ord }\Omega(0,y,0)=1$. Thus $U'$ is prepared at $q$.

\end{proof}

We will now construct the function $\omega(m,r_2,\ldots,r_{m-1})$ where $m>1$, $r_i\in \NN$ for $2\le i\le m-1$ and $r_{m-1}>0$.

Let $I$ be the ideal in the polynomial ring $\mathfrak k[x,z]$ generated by $z^m$ and $x^{r_i}z^{m-i}$ for all $i$ such that $2\le i\le m-1$ and $r_i>0$. Let $\mathfrak m=(x,z)$ be the maximal ideal of $\mathfrak \mathfrak k[x,z]$. Let $\Phi:V_1\rightarrow V=\mbox{Spec}(\mathfrak k[x,z])$ be the toroidal morphism with respect to the divisor $xz=0$ on $V$ such that $V_1$ is the minimal nonsingular surface such that
\begin{enumerate}
\item[1)] $I\mathcal O_{V_1,q}$ is principal if $q\in \Phi^{-1}(\mathfrak m)$ is not on the strict transform of $z=0$.
\item[2)] If $q$ is the intersection point of the strict transform of $z=0$ and $\Phi^{-1}(\mathfrak m)$, so
that $q$ has regular parameters $x_1,z_1$, with $x=x_1, z=x_1^bz_1$ for some $b\in\ZZ_+$, then $r_i+b_1(m-i)<b_1m$ for some $2\le i\le m-1$ with $r_i>0$.
\end{enumerate}

Every $q\in \Phi^{-1}(\mathfrak m)$ which is not on the strict transform of $z=0$ has regular parameters
$x_1,z_1$ at $q$ which are related to $x,z$ by one of the following expressions:
\begin{equation}\label{eq80}
x=x_1^{a_1},\,\, z=x_1^{b_1}(z_1+\alpha)
\end{equation}
for some $0\ne \alpha\in \mathfrak k$ and $a_1,b_1>0$, or
\begin{equation}\label{eq81}
x=x_1^{a_1}z_1^{b_1},\,\,z=x_1^{c_1}z_1^{d_1}
\end{equation}
with $a_1,b_1>0$ and $a_1d_1-b_1c_1=\pm 1$. There are only finitely many values of $a_1,b_1$ occurring in expressions (\ref{eq80}), and $a_1,b_1,c_1,d_1$ occurring in expressions (\ref{eq81}).

The point $q$ on the intersection of the strict  transform of $z=0$ and $\Phi^{-1}(\mathfrak m)$ has regular parameters $x_1,z_1$ defined by
\begin{equation}\label{eq82}
x=x_1,\,\, z=x_1^{b_1}z_1
\end{equation}
for some $b_1>0$.

Now we define $\omega=\omega(m,r_2,\ldots,r_{m-1})$ to be a number such that 
$$
\omega>\mbox{max}\{\frac{b_1}{a_1}m,r_i+\frac{b_1}{a_1}(m-i)\mbox{ for }2\le i\le m-1\mbox{ such that }r_i>0\}.
$$
For all expressions (\ref{eq80}),
$$
\omega>\mbox{max}\{\frac{c_1}{a_1}m, \frac{d_1}{b_1}m,r_i+\frac{c_1}{a_1}(m-i), r_i+\frac{d_1}{b_1}(m-i)\mbox{ for }2\le i\le m-1\mbox{ such that }r_i>0\}
$$
for all expressions (\ref{eq81}), and
$$
\omega>\mbox{max}\{b_1m,r_i+b_1(m-i)\mbox{ for }2\le i\le m-1\mbox{ such that }r_i>0\}
$$
in (\ref{eq82}).

\begin{Theorem}\label{1-pointspec}  Suppose that $p\in\mbox{
Sing}_{1}(X)$ is a 1-point and $X$ is 3-prepared at $p$. Let $x,y,z$ be permissible parameters at $p$ giving  a form (\ref{eq4})  at $p$. 
Let $U$ be an \'etale cover of an affine neighborhood  of $p$ in which $ x,  y,  z$ are uniformizing parameters. Then $ x  z =0$ gives a toroidal structure $\overline D$ on $U$.

 There is (after possibly replacing $U$ with a smaller neighborhood of $p$) a unique, minimal toroidal morphism $\psi:U'\rightarrow U$ with respect to $\overline D$ with has
 the property that $U'$ is nonsingular, 2-prepared and $\Gamma_D(U')<\sigma_D(p)$.
This map $\psi$  factors as a sequence of permissible blowups 
$\pi_i:U_i\rightarrow U_{i-1}$ of  sections $C_i$ over the two curve $C$ of $\overline D$. $U_i$ is 1-prepared for $U_i\rightarrow S$. 
We have that the curve $C_i$ blown up in $U_{i+1}\rightarrow U_i$ is in $\mbox{Sing}_{\sigma_D(p)}(U_i)$ if $C_i$ is not a 2-curve of $D_{U_i}$, and that $C_i$ is in $\mbox{Sing}_1(U_i)$ if $C_i$ is  a 2-curve of $D_{U_i}$.
\end{Theorem}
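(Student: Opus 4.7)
My approach is to reduce everything to the proof of Theorem \ref{1-pointres}, exploiting the fact that the condition $t>\omega(m,r_2,\ldots,r_{m-1})$ has been engineered precisely so that the tail term $x^t\Omega$ in (\ref{eq4}) is negligible in every chart of the relevant toric resolution. Specifically, let $I$ be the ideal in $\Gamma(U,\mathcal O_X)$ generated by $z^m$ together with those $x^{r_i}z^{m-i}$ for which $2\le i\le m-1$ and $\tau_i\ne 0$; by hypothesis $\tau_{m-1}\ne 0$, so $x^{r_{m-1}}z\in I$. The ideal $I\mathfrak k[x,z]$ is exactly the ideal used to define $\omega$, and the toric resolution $\Phi:V_1\to V$ of $\mbox{Spec}(\mathfrak k[x,z])$ appearing in that definition pulls back to a toroidal morphism $\psi:U'\to U$ with respect to $\overline D=\{xz=0\}$; I will show this $\psi$ has the required properties and is minimal.

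At each point $q\in\psi^{-1}(p)$ I would substitute the monomial expression for $(x,z)$ from one of the three cases (\ref{eq71}), (\ref{eq72}), (\ref{eq73}) into (\ref{eq4}). The pulled-back expression for $F$ agrees with the corresponding expression in the proof of Theorem \ref{1-pointres} except that the term $\tau_m x^{r_m}$ is replaced by $x^t\Omega$. The defining inequalities for $\omega$ guarantee that after each substitution the $x_1$-exponent (respectively $z_1$-exponent) of $x^t\Omega$ strictly exceeds the $x_1$- (respectively $z_1$-) exponent of every generator of $I$. Hence, after dividing $G$ by a local generator $x_1^sz_1^{s'}$ of $I\hat{\mathcal O}_{U',q}$, the pulled-back $x^t\Omega$ contribution lies in $(x_1)$ in the 1-point case, in $(x_1,z_1)$ in the 2-point case, and in $(x_1)$ in the strict-transform case (\ref{eq73}). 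Consequently the residue $G'(0,0,z_1)$ (or $G'(0,y,0)$) is determined entirely by the $\tau_0,\ldots,\tau_{m-1}$ terms, exactly as in the argument for (\ref{state1}), (\ref{state2}), (\ref{state3}), and since $\tau_{m-1}\ne 0$ this suffices to conclude that $U'$ is 2-prepared at $q$ with $\sigma_D(q)<\sigma_D(p)$.

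The factorization of $\psi$ as a sequence of permissible blow-ups $\pi_i:U_i\to U_{i-1}$ of sections $C_i$ over the 2-curve $C=\{x=z=0\}$ of $\overline D$, together with 1-preparedness of each $U_i$ over $S$ and the description of each $C_i$ as lying in $\mbox{Sing}_{\sigma_D(p)}(U_i)$ (when $C_i$ is not a 2-curve of $D_{U_i}$) or in $\mbox{Sing}_1(U_i)$ (when it is), follows formally from the corresponding toric property of $\Phi:V_1\to V$ and its case-by-case analysis in Theorem \ref{1-pointres}, since $\psi$ is obtained by base change from $\Phi$. Uniqueness and minimality then follow from the uniqueness and minimality of the toric resolution $\Phi$ itself, once one checks that any strictly coarser toric modification would fail to principalize $I$ in at least one chart and hence, by the case analysis above, would fail either 2-preparedness or the strict inequality $\Gamma_D(U')<\sigma_D(p)$.

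The main obstacle is the purely bookkeeping task of tabulating, in each of the three chart types, which monomial $x_1^sz_1^{s'}$ generates $I\hat{\mathcal O}_{U',q}$, and verifying term-by-term that the defining inequalities for $\omega$ force the $x^t\Omega$ exponent to exceed $s$ (and $s'$); once this tabulation is in hand, the remainder of the proof is a routine transcription of the argument of Theorem \ref{1-pointres} with the $\tau_m$ term deleted and the role of $r_m$ played by $t$.
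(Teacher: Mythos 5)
Your proposal is correct and follows essentially the same route as the paper, whose entire proof of this theorem is the remark that it is ``similar to that of Theorem \ref{1-pointres}, using the fact that $t>\omega(m,r_2,\ldots,r_{m-1})$''. Your chart-by-chart verification that the defining inequalities for $\omega$ make the pulled-back $x^t\Omega$ term divisible by (and strictly deeper than) the local generator of $I$ in each of the three chart types (\ref{eq80}), (\ref{eq81}), (\ref{eq82}) is exactly the intended content of that remark, with the $\tau_m x^{r_m}$ term of the earlier argument deleted and minimality governed by the surface $V_1$ from the construction of $\omega$.
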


\begin{proof} The proof is similar to that of Theorem \ref{1-pointres}, using the fact that $t>\omega(m,r_2,\ldots,r_{m-1})$ as defined above.
\end{proof}

\begin{Theorem}\label{2-pttheorem} Suppose that $p\in X$ is a 2-point and $X$ is 3-prepared at $p$
with $\sigma_D(p)>0$. Let $x,y,z$ be permissible parameters at $p$ giving a form (\ref{eq2}) at $p$.
Let $U$ be an \'etale cover of an affine neighborhood of $p$ in which $ x, y, z$ are uniformizing parameters on $U$. Then $ x y z=0$ gives a toroidal structure $\overline D$ on $U$.
Let $I$ be the ideal in $\Gamma(U,\mathcal O_X)$ generated by $ z^m$, $ x^{r_m} y^{s_m}$ if $\tau_m\ne 0$ and  
$$
\{ x^{r_i} y^{s_i} z^{m-i}\mid 2\le i\le m-1\mbox{ and }\tau_{i}\ne 0\}.
$$
Suppose that $\psi:U_1\rightarrow U$ is a toroidal morphism with respect to $\overline D$ such that
$U_1$ is nonsingular and  $I{\mathcal O}_{U_1}$ is locally principal.  Then
(after possibly replacing $U$ with a smaller neighborhood of $p$)
$U_1$ is 2-prepared for $U_1\rightarrow S$, with $\sigma_{D}(q)<\sigma_D(p)$ for all $q\in U_1$.
 \end{Theorem}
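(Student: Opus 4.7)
The plan is to mirror the case-by-case analysis used in the proof of Theorem \ref{1-pointres}, adapted to the three-variable monomial form (\ref{eq2}). Since $\psi:U_1\to U$ is toroidal with respect to the SNC divisor $\overline{D}:xyz=0$ on $U$ and $U_1$ is nonsingular, $\psi$ factors as a sequence of blow-ups of 2-curves and 3-points of (the preimage of) $\overline{D}$. For each $q\in\psi^{-1}(p)$ there are permissible parameters $x_1,y_1,z_1$ at $q$ related to $x,y,z$ by an explicit toric monomial substitution analogous to (\ref{eq71}), (\ref{eq72}), (\ref{eq73}) whose $3\times 3$ exponent matrix has determinant $\pm 1$; translates of the form $w_j+\alpha_j$ with $\alpha_j\in\mathfrak k^\ast$ may appear at 1-points of $\overline{D}'=\psi^{-1}(\overline D)$ not lying on every toric component. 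I would substitute these expressions into (\ref{eq2}), use the hypothesis that $I\hat{\mathcal{O}}_{U_1,q}$ is principal to identify a monomial $M$ in $x_1,y_1,z_1$ generating $I\hat{\mathcal{O}}_{U_1,q}$, and factor $F=M\cdot F'$.

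The computation then proceeds by cases on which of the monomials $z^m$, $x^{r_i}y^{s_i}z^{m-i}$ (with $2\le i\le m-1$ and $\tau_i\ne 0$), or $x^{r_m}y^{s_m}$ provides the generator $M$ at $q$. When $z^m$ generates, the $\tau_0$-term of $F'$ is a unit while the remaining terms acquire positive monomial factors in the parameters cutting out $D_{U_1}$; hence $F'$ restricted to those parameters has $z$-order at most $m-1$, giving $\sigma_D(q)\le m-2<\sigma_D(p)$. When an intermediate $x^{r_i}y^{s_i}z^{m-i}$ generates, the corresponding $\tau_i$-term of $F'$ is a unit independent of $z_1$, so $\mathrm{ord}\, F'(0,0,z_1)=0$ modulo $D_{U_1}$ and $q$ is prepared. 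When $x^{r_m}y^{s_m}$ generates, the $\tau_m$-term of $F'$ is a unit and the resulting exponents relative to $u$ satisfy a nonvanishing Jacobian condition inherited from $(r_m+c)b-(s_m+d)a\ne 0$, so $q$ is again prepared. At 3-points of $\overline{D}'$, principalization together with the $\pm 1$ determinant forces $F'=1$, yielding the prepared 3-point form (\ref{3-point}).

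The main obstacle is handling points $q$ lying on the strict transform of $z=0$, or on exceptional components arising from blow-ups of the 2-curves $x=z=0$ or $y=z=0$, that do not belong to $D_{U_1}$. At such $q$ the toric component carrying $z_1$ is not a component of $D_{U_1}$, so the form (\ref{2-point}) at $q$ must be read off with respect to the smaller divisor $D_{U_1}$, and one must verify that the nondegeneracy condition $(r_m'+c')b'-(s_m'+d')a'\ne 0$ carries over. This propagation is precisely where the non-degeneracy hypothesis from Definition \ref{3-prep} is essential: combined with the $\pm 1$ determinant of each monomial substitution, it guarantees that the $x^{r_m}y^{s_m}$ term is not absorbed into $P(\cdots)$ whenever it provides the local generator of $I$. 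After possibly shrinking $U$ to a neighborhood of $p$ on which $\sigma_D$ attains its maximum only at $p$ (by upper semicontinuity), the strict inequality at all $q\in\psi^{-1}(p)$ propagates via Lemma \ref{Torgood} and Lemma \ref{Specialize} to give $\sigma_D(q)<\sigma_D(p)$ throughout $U_1$ as well as preparedness at the generic points of 2-curves and at all 3-points, thereby establishing that $U_1$ is 2-prepared.
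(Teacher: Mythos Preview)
Your overall strategy---substitute the toric monomial change of variables at each $q\in\psi^{-1}(p)$ and case on which of $z^m$, $x^{r_i}y^{s_i}z^{m-i}$, $x^{r_m}y^{s_m}$ generates $I\hat{\mathcal O}_{U_1,q}$---is exactly the paper's approach. Two corrections are needed.

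First, a minor point: when an intermediate monomial $x^{r_h}y^{s_h}z^{m-h}$ generates, the surviving term is not ``a unit independent of $z_1$''. Under a substitution such as $z=x_1^{e_3}(z_1+\tilde\beta)$ with $\tilde\beta\ne 0$ it carries the factor $(z_1+\tilde\beta)^{m-h}$, and one only obtains $\sigma_D(q)\le m-h-1$, not $\sigma_D(q)=0$. This still gives $\sigma_D(q)<\sigma_D(p)$, but your stated reason is wrong.

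Second, and more seriously, you have missed the case that actually requires the most work. At a 2-point $q$ of $\psi^{-1}(\overline D)$ one has
\[
x=x_1^{e_{11}}y_1^{e_{12}}(z_1+\alpha)^{\lambda_1},\quad
y=x_1^{e_{21}}y_1^{e_{22}}(z_1+\alpha)^{\lambda_2},\quad
z=x_1^{e_{31}}y_1^{e_{32}}(z_1+\alpha)^{\lambda_3},
\]
and the delicate subcase is $e_{11}e_{22}-e_{12}e_{21}=0$ with $e_{11},e_{12}>0$, so that $q$ is still a 2-point of $D_{U_1}$. Here the map taking $(r_i,s_i,m-i)$ to its $(x_1,y_1)$-exponent is not injective, so your case split ``which monomial generates'' is ambiguous: several terms of $F$ could in principle contribute to $G'$ modulo $(x_1,y_1)$. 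The paper resolves this with a Cramer's rule computation on the full $3\times 3$ exponent matrix (determinant $\pm 1$) showing that $m-i$ is nevertheless uniquely determined by the generator, so exactly one term survives; one must then further check, using $a(d+s_m)-b(c+r_m)\ne 0$ together with $\det A=\pm 1$, that this single surviving term is not absorbed into the series $P(x_1^{t_1}y_1^{t_2})$. Your outline does not mention this case at all. By contrast, the points you flag as the ``main obstacle'' (those on the strict transform of $z=0$, which become 1-points of $D_{U_1}$) are handled fairly directly once one observes $(c+r_m)\lambda_1+(d+s_m)\lambda_2\ne 0$ from $a\lambda_1+b\lambda_2=0$ and the nondegeneracy hypothesis; no new ``$(r_m'+c')b'-(s_m'+d')a'\ne 0$'' condition needs to be established there since the target form is (\ref{1-point}), not (\ref{2-point}).
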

 
 \begin{proof} Suppose that $q\in \psi^{-1}(p)$ is a 1-point for $\psi^{-1}(\overline D)$. 
 Then $q$ is also a 1-point for $D_{U_1}$. Since  $\psi$ is toroidal with respect to $\overline D$, there exist regular parameters $\hat x_1,\hat y_1,\hat z_1$ in $\hat{\mathcal O}_{X_1,q}$ and a matrix $A=(a_{ij})$ with nonegative integers as coefficients such that $\mbox{Det }A=\pm 1$, and we have an expression
 \begin{equation}\label{eq35}
 \begin{array}{lll}
 x&=& \hat x_1^{a_{11}}(\hat y_1+\alpha)^{a_{12}}(\hat z_1+\beta)^{a_{13}}\\
 y&=& \hat x_1^{a_{21}}(\hat y_1+\alpha)^{a_{22}}(\hat z_1+\beta)^{a_{23}}\\
 z&=& \hat x_1^{a_{31}}(\hat y_1+\alpha)^{a_{32}}(\hat z_1+\beta)^{a_{33}}
 \end{array}
 \end{equation}
 with $a_{11}, a_{21}, a_{31}\ne 0$ and $0\ne \alpha,\beta\in \mathfrak k$. Set
 $$\overline x_1=\hat x_1(\hat y_1+\alpha)^{\frac{a_{12}}{a_{11}}}(\hat z_1+\beta)^{\frac{a_{13}}{a_{11}}}\in\hat{\mathcal O}_{X_1,q}.
 $$
 Substituting into (\ref{eq35}), we have
 \begin{equation}
 \begin{array}{lll}
 x&=& \overline x_1^{a_{11}}\\
 y&=& \overline x_1^{a_{21}}(\hat y_1+\alpha)^{a_{22}-\frac{a_{21}a_{12}}{a_{11}}}(\hat z_1+\beta)^{a_{23}-\frac{a_{21}{a_{13}}}{a_{11}}}\\
 z&=& \overline x_1^{a_{31}}(\hat y_1+\alpha)^{a_{32}-\frac{a_{31}a_{12}}{a_{11}}}(\hat z_1+\beta)^{a_{33}-\frac{a_{31}a_{13}}{a_{11}}}.
 \end{array}
 \end{equation}
 
 Let $B=(b_{ij})$ be the adjoint matrix of $A$.
 Let $\overline\alpha=\alpha^{\frac{b_{33}}{a_{11}}}\beta^{-\frac{b_{23}}{a_{11}}}$, $\overline\beta=\alpha^{-\frac{b_{32}}{a_{11}}}
 \beta^{\frac{b_{22}}{a_{11}}}$.
 Set
 $$
 \overline y_1=\frac{y}{\overline x_1^{a_{21}}}-\overline \alpha,
 \overline z_1=\frac{z}{\overline x_1^{a_{31}}}-\overline\beta.
 $$
 We will show that $\overline x_1,\overline y_1,\overline z_1$ are regular parameters in $\hat{\mathcal O}_{X_1,q}$. We have that
 $$
 \begin{array}{lll}
 (\hat y_1+\alpha)^{a_{22}-\frac{a_{21}a_{12}}{a_{11}}}(\hat z_1+\beta)^{a_{23}-\frac{a_{21}a_{13}}{a_{11}}}
 &=&
 \overline\alpha+\frac{b_{33}}{a_{11}}\alpha^{\frac{b_{33}}{a_{11}}-1}\beta^{-\frac{b_{23}}{a_{11}}}\hat y_1-
 \frac{b_{23}}{a_{11}}\alpha^{\frac{b_{33}}{a_{11}}}\beta^{-\frac{b_{23}}{a_{11}}-1}\hat z_1+\cdots\\
 (\hat y_1+\alpha)^{a_{32}-\frac{a_{31}a_{12}}{a_{11}}}(\hat z_1+\beta)^{a_{33}-\frac{a_{31}a_{13}}{a_{11}}}
 &=&
 \overline\beta-\frac{b_{32}}{a_{11}}\alpha^{-\frac{b_{32}}{a_{11}}-1}\beta^{\frac{b_{22}}{a_{11}}}\hat y_1+
 \frac{b_{22}}{a_{11}}\alpha^{-\frac{b_{32}}{a_{11}}}\beta^{\frac{b_{22}}{a_{11}}-1}\hat z_1+\cdots
 \end{array}
 $$

 Let 
 $$
 C=\left(\begin{array}{ll}
 \frac{b_{33}}{a_{11}}\alpha^{\frac{b_{33}}{a_{11}}-1}\beta^{-\frac{b_{23}}{a_{11}}}&
 -\frac{b_{23}}{a_{11}}\alpha^{\frac{b_{33}}{a_{11}}}\beta^{-\frac{b_{23}}{a_{11}}-1}\\
 -\frac{b_{32}}{a_{11}}\alpha^{-\frac{b_{32}}{a_{11}}-1}\beta^{\frac{b_{22}}{a_{11}}}&
 \frac{b_{22}}{a_{11}}\alpha^{-\frac{b_{32}}{a_{11}}}\beta^{\frac{b_{22}}{a_{11}}-1}
 \end{array}\right).
 $$

 We must show that $C$ has rank 2. $C$ has the same rank as
 $$
 \left(\begin{array}{ll}
 b_{33}\beta&-b_{23}\alpha\\
 b_{32}\beta&-b_{22}\alpha
 \end{array}\right)
 =
 \left(\begin{array}{ll}
 b_{33}&b_{23}\\
 b_{32}&b_{22}
 \end{array}\right)
 \left(\begin{array}{ll}
 \beta&0\\
 0&-\alpha
 \end{array}\right).
 $$
 Since $\alpha,\beta\ne 0$, $C$ has the same rank as 
 $$
 B'=\left(\begin{array}{ll}
 b_{33}&b_{23}\\
 b_{32}&b_{22}
 \end{array}\right).
 $$
 Since $B$ has rank 3, 
 $$
 \left(\begin{array}{lll}
 b_{21}&b_{22}&b_{23}\\
 b_{31}&b_{32}&b_{33}
 \end{array}\right)
 $$
 has rank 2.
 Since
 $$
 \left(\begin{array}{l} b_{21}\\ b_{31}\end{array}\right)
 =-\frac{a_{21}}{a_{11}}\left(\begin{array}{l} b_{22}\\ b_{32}\end{array}\right)
 +\frac{a_{31}}{a_{11}}\left(\begin{array}{l} b_{23}\\ b_{33}\end{array}\right),
 $$
 we have that $B'$ has rank 2, and hence $C$ has rank 2.  Thus $\overline x_1,\overline y_1,\overline z_1$
 are regular parameters in $\hat{\mathcal O}_{X_1,q}$. We have
 $$
 x=\overline x_1^{a_{11}}, y=\overline x_1^{a_{21}}(\overline y_1+\overline \alpha),
 z=\overline x_1^{a_{31}}(\overline z_1+\overline\beta).
 $$
 We have that $u=(x^ay^b)^{\ell}$. Let
 $$
 t=-\frac{b}{a_{11}a+a_{21}b},
 $$
 and set $\overline x_1=x_1(y_1+\overline\alpha)^t$. Define $\overline y_1=y_1$, $\tilde \alpha=\overline\alpha$,
 $\tilde \beta=\overline \alpha^{ta_{31}}\overline\beta$ and $z_1=(\overline y_1+\overline\alpha)^{ta_{31}}(z_1+\overline\beta)-\tilde \beta$. Then $x_1,y_1,z_1$ are permissible parameters at $q$, with $u=x_1^{(aa_{11}+ba_{21})l}$,
 $$
 x=x_1^{a_{11}}(y_1+\tilde\alpha)^{ta_{11}},
 y=x_1^{a_{21}}(y_1+\tilde\alpha)^{ta_{21}+1},
 z=x_1^{a_{31}}(z_1+\tilde\beta).
 $$

 Thus we have shown that there exist (formal) permissible parameters
 $x_1,y_1,z_1$ at $q$ such that
 $$
 x=x_1^{e_1}(y_1+\tilde\alpha)^{\lambda_1}, y=x_1^{e_2}(y_1+\tilde\alpha)^{\lambda_2}, z=x_1^{e_3}(z_1+\tilde\beta)
 $$
 where $e_1,e_2,e_3\in\ZZ_+$, $\tilde \alpha,\tilde \beta\in \mathfrak k$ are nonzero,  $\lambda_1,\lambda_2\in\QQ$ are both nonzero, and 
 $u=x_1^{b_1l}$,
 where $b_1=ae_1+be_2$, $a\lambda_1+b\lambda_2=0$.
 We then have an expression
 $$
 v=P(x_1^{ae_1+be_2})+x_1^{ce_1+de_2}G,
 $$
 where
 $$
 \begin{array}{lll}
 G&=&
 (y_1+\tilde \alpha)^{c\lambda_1+d\lambda_2}
 [\tau_0 x_1^{e_3m}(z_1+\tilde\beta)^m\\
 &&+ \tau_2 x_1^{r_2e_1+s_2e_2+(m-2)e_3}(y_1+\tilde \alpha)^{r_2\lambda_1+s_2\lambda_2}(z_1+\tilde \beta)^{m-2}+\cdots\\
 && + \tau_{m-1}x_1^{r_{m-1}e_1+s_{m-1}e_2+e_3}(y_1+\tilde\alpha)^{r_{m-1}\lambda_1+s_{m-1}\lambda_2}(z_1+\tilde\beta)\\
 &&+\tau_m x_1^{r_me_1+s_me_2}y_1^{r_m\lambda_1+s_m\lambda_2}].
 \end{array}
 $$
 Let $\tau'=\tau_0(0,0,0)$. Let $x_1^s$ be a  generator of $I\hat{\mathcal O}_{U_1,q}$. Let $G'=\frac{F}{x_1^s}$.
 
 If $z^m$ is a local generator of $I\hat{\mathcal O}_{U_1,q}$, then $G'$ has  an expression
 $$
 G'=\tau'\tilde\alpha^{\phi}(z_1+\tilde\beta)^{m}+g_2(z_1+\tilde\beta)^{m-2}+\cdots +g_{m-1}(z+\tilde\beta)+g_m+x_1\Omega_1+y_1\Omega_2
 $$
 for some $g_i\in \mathfrak k$ and $\Omega_1,\Omega_2\in \hat{\mathcal O}_{U_1,q}$, where $\phi=c\lambda_1+d\lambda_2$.
 Setting $F'=G'-G'(x_1,0,0)$, and
 $P'(x_1)=P(x_1^{ae_1+be_2})+x_1^{ce_1+de_2+s}G'(x_1,0,0)$,
 we have that 
 $$
 u=x_1^{b_1l},
 v=P'(x_1)+x_1^{ce_1+de_2+s}F'
 $$
 has the form (\ref{1-point})
 and   $\sigma_{D}(q)\le \mbox{ord }F'(0,0,z_1)-1\le m-2 <m-1=\sigma_D(p)$ since $0\ne \tilde\beta$.
 
 Suppose that $z^m$ is not a local generator of $I\hat{\mathcal O}_{U_1,q}$, but
 there exists some $i$ with $2\le i\le m-1$ such that $\tau_ix^{r_i}y^{s_i}z^{m-i}$
 is a local generator of $I\hat{\mathcal O}_{U_1,q}$. Let $h$ be the smallest $i$ with this property. Then  
 $G'$ has  an expression
 $$
 G'=g_h(z_1+\tilde\beta)^{m-h}+\cdots +g_{m-1}(z_1+\tilde\beta)+g_m+x_1\Omega_1+y_2\Omega_2
 $$
 for some $g_i\in \mathfrak k$ with $g_h\ne 0$ As in the previous case, we have  
 $$
 \sigma_{D}(q)\le m-h-1<m-1=\sigma_D(p).
 $$
 
Suppose that  $z^m$ is not a local generator of $I\hat{\mathcal O}_{U_1,q}$, and
  $\tau_ix^{r_i}y^{s_i}z^{m-i}$
 is not a local generator of $I\hat{\mathcal O}_{U_1,q}$ for $2\le i\le m$.
 Then $x^{r_s}y^{r_s}$ is a local generator of $I\hat{\mathcal O}_{U_1,q}$, and 
 $G'$ has  an expression
 $$
 G'=\tau_m'(y_1+\tilde\alpha)^{\phi+r_m\lambda_1+s_m\lambda_2}+x_1\Omega
 $$
 where $\tau_m'=\tau_m(0,0,0)$
 for some  $\Omega\in \hat{\mathcal O}_{U_1,q}$. 
 Suppose, if possible, that
 $\phi+r_m\lambda_1+s_m\lambda_2=0$.
 Since $\phi+r_m\lambda_1+s_m\lambda_2=(c+r_m)\lambda_1+(d+s_m)\lambda_2$, we then have that
 the nonzero vector $(\lambda_1,\lambda_2)$ satisfies
 $a\lambda_1+b\lambda_2=(c+r_m)\lambda_1+(d+s_m)\lambda_2=0$. Thus the determinant
 $a(d+s_m)-b(c+r_m)=0$, a contradiction to our assumption that $F$ satisfies (\ref{2-point}).

 Now since $\phi+r_m\lambda_1+s_m\lambda_2\ne 0$ and
  $\tilde \alpha\ne 0$, we have
 $1=\mbox{ord }G'(0,y_1,0)<m$, so that $\sigma_{D}(q)=0<m-1=\sigma_D(p)$.
 
 \vskip .2truein
 Suppose that $q\in\psi^{-1}(p)$ is a 2-point of $\psi^{-1}(\overline D)$.
 Then there exist  (formal) permissible parameters
 $\hat x_1,\hat y_1,\hat z_1$ at $q$ such that
 \begin{equation}\label{eq31}
 x=\hat x_1^{e_{11}}\hat y_1^{e_{12}}(\hat z_1+\hat \alpha)^{e_{13}}, y=\hat x_1^{e_{21}}\hat y_1^{e_{22}}(\hat z_1+\hat \alpha)^{e_{23}}, z=\hat x_1^{e_{31}}\hat y_1^{e_{32}}(\hat z_1+\hat \alpha)^{e_{33}}
 \end{equation}
 where $e_{ij}\in\NN$, with $\mbox{Det}(e_{ij})=\pm1$, and $\hat \alpha\in \mathfrak k$ is nonzero.
 We further have
 $$
 e_{11}+e_{12}>0, e_{21}+e_{22}>0\mbox{ and }e_{31}+e_{32}>0.
 $$

 First suppose that $e_{11}e_{22}-e_{12}e_{21}\ne 0$. Then $q$ is a 2-point of $D_{U_1}$.
 
 There exist $\lambda_1,\lambda_2\in\QQ$ such that upon setting
 $$
 \hat x_1=x_1(z_1+\hat\alpha)^{\lambda_1}\mbox{ and }
 \hat y_1=y_1(z_1+\hat\alpha)^{\lambda_2},
 $$
 we have
 $$
 x=x_1^{e_{11}}y_1^{e_{12}},
 y=x_1^{e_{21}}y_1^{e_{22}},
 z=x_1^{e_{31}}y_1^{e_{32}}(z_1+\hat\alpha)^r,
 $$
 where
 $$
 \left(\begin{array}{lll}
 e_{11}&e_{12}&e_{13}\\
 e_{21}&e_{22}&e_{23}\\
 e_{31}&e_{32}&e_{33}
 \end{array}\right)
 \left(\begin{array}{l}
 \lambda_1\\ \lambda_2\\ 1\end{array}\right)
 =\left(\begin{array}{l} 0\\0\\r\end{array}\right).
 $$
 By Cramer's rule,
 $$
 r=\pm\frac{1}{e_{11}e_{22}-e_{12}e_{21}}\ne 0.
 $$
 Now set $z_1=(z_1+\hat\alpha)^r-\hat\alpha^r$ and $\alpha=\hat\alpha^r$ to obtain  permissible
 parameters $x_1,y_1,z_1$ at $q$ with
 $$
 x=x_1^{e_{11}}y_1^{e_{12}}, y=x_1^{e_{21}}y_1^{e_{22}}, z=x_1^{e_{31}}y_1^{e_{32}}(z_1+\alpha).
 $$
 
We have an expression
$$
u=((x_1^{e_{11}}y_1^{e_{12}})^{a}(x_1^{e_{21}}y_1^{e_{22}})^{b})^{\ell}
=(x_1^{t_1}y_1^{t_2})^{\ell_1}
$$
where $t_1,t_2,\ell_1\in\ZZ_+$ and $\mbox{gcd}(t_1,t_2)=1$.

 We then have an expression
 $$
 v=P((x_1^{t_1}y_1^{t_2})^{\frac{\ell_1}{\ell}})+x_1^{ce_{11}+de_{21}}y_1^{ce_{12}+de_{22}}G,
 $$
 where
 $$
 \begin{array}{lll}
 G&=&
 [\tau_0 x_1^{me_{31}}y_1^{me_{32}}(z_1+\alpha)^{m}+ \tau_2 x_1^{r_2e_{11}+s_2e_{21}+(m-2)e_{31}}y_1^{r_2e_{12}+s_2e_{22}+(m-2)e_{32}}(z_1+\alpha)^{m-2}+\cdots\\
 && +
  \tau_{m-1}x_1^{r_{m-1}e_{11}+s_{m-1}e_{21}+e_{31}}y_1^{r_{m-1}e_{12}+s_{m-1}e_{22}+e_{32}}(z_1+\beta)
 +\tau_m x_1^{r_me_{11}+s_me_{21}}y_1^{r_me_{12}+s_me_{22}}].
 \end{array}
 $$
 Let $\tau'=\overline\tau_0(0,0,0)$. Let $x_1^sy_1^t$ be a  generator of $I\hat{\mathcal O}_{U_1,q}$. Let $G'=\frac{G}{x_1^sy_1^t}$.

 If $z^m$ is a local generator of $I\hat{\mathcal O}_{U_1,q}$, then $G'$ has  an expression
 $$
 G'=\tau'(z_1+\alpha)^{m}+g_2(z_1+\alpha)^{m-2}+\cdots +g_{m-1}(z-\alpha)+g_m+x_1\Omega_1+y_1\Omega_2
 $$
 for some $g_i\in \mathfrak k$ and $\Omega_1,\Omega_2\in \hat{\mathcal O}_{U_1,q}$.
 Let 
 \begin{equation}\label{eq30}
 \overline P(x_1^{t_1}y_1^{t_2})=\sum_{t_2i-t_1j=0} \frac{1}{i!j!}\frac{\partial (x_1^{ce_{11}+de_{21}}y_1^{ce_{12}+de_{22}}G)}{\partial x_1^i\partial y_1^j}(0,0,0)x_1^iy_1^j
 \end{equation}
 and $F'=G'-\frac{\overline P(x_1^{t_1}y_1^{t_2})}{x_1^{ce_{11}+de_{21}+s}y_1^{ce_{12}+de_{22}+t}}$.
 Set 
 $P'(x_1^{t_1}y_1^{t_2})=P((x_1^{t_1}y_1^{t_2})^{\frac{\ell_1}{\ell}})+\overline P(x_1^{t_1}y_1^{t_2})$.
 We have that 
 $$
 u=(x_1^{t_1}y_1^{t_2})^{\ell_1},
 v=P'(x_1^{t_1}y_1^{t_2})+x_1^{ce_{11}+de_{21}+s}y_1^{ce_{12}+de_{22}+t}F'
 $$
 has the form (\ref{2-point}), 
 and   $\sigma_{D}(q)= \mbox{ord }F'(0,0,z_1)-1\le m-2 <m-1=\sigma_D(p)$ since $0\ne \alpha$.
 
 Suppose that $z^m$ is not a local generator of $I\hat{\mathcal O}_{U_1,q}$, but
 there exists some $i$ with $2\le i\le m-1$ such that $\tau_ix^{r_i}y^{s_i}z^{m-i}$
 is a local generator of $I\hat{\mathcal O}_{U_1,q}$. Let $h$ be the smallest $i$ with this property. Then  
 $G'$ has  an expression
 $$
 G'=g_h(z_1+\beta)^{m-h}+\cdots +g_m+x_1\Omega_1+y_2\Omega_2
 $$
 for some $g_i\in \mathfrak k$ with $g_h\ne 0$ As in the previous case, we have  $\sigma_{D}(q)\le m-h-1<m-1=\sigma_D(p)$.
 
Suppose that  $z^m$ is not a local generator of $I\hat{\mathcal O}_{U_1,q}$, and
  $\tau_ix^{r_i}y^{s_i}z^{m-i}$
 is not a local generator of $I\hat{\mathcal O}_{U_1,q}$ for $2\le i\le m-1$.
 Then $x^{r_m}y^{r_m}$ is a local generator of $I\hat{\mathcal O}_{U_1,q}$, and 
 then $G'$ has  an expression
 $$
 G'=1+x_1\Omega_1+y_1\Omega_2
 $$
 for some  $\Omega_1,\Omega_2\in \hat{\mathcal O}_{U_1,q}$. 
 
 We now claim that after  replacing $G'$ with
 $F'=G'-\frac{\overline P(x_1^{t_1}y_1^{t_2})}{x_1^{ce_{11}+de_{21}+s}y_1^{ce_{12}+de_{22}+t}}$,
 where $\overline P$ is defined by (\ref{eq30}), we have that
 $F'(0,0,0)\ne 0$.  If this were not the case, we would have
 $$
 \begin{array}{lll}
 0&=&\mbox{Det}\left(\begin{array}{ll}
 (c+r_m)e_{11}+(d+s_m)e_{21}&(c+r_m)e_{12}+(d+s_m)e_{22}\\
 ae_{11}+be_{21}&ae_{12}+be_{22}
 \end{array}
 \right)\\
 &=&\mbox{Det}\left(\begin{array}{ll}
 c+r_m&d+s_m\\
 a&b
 \end{array}\right)
 \mbox{Det}\left(\begin{array}{ll}
 e_{11}&e_{12}\\
 e_{21}&e_{22}
 \end{array}\right).
 \end{array}
 $$
 Since $e_{11}e_{22}-e_{21}e_{12}\ne 0$ (by our assumption), we get
 $$
 0=\mbox{Det}\left(\begin{array}{ll}
 c+r_m&d+s_m\\
 a&b
 \end{array}\right)
 $$
 which is a contradiction to our assumption that $F$ satisfies (\ref{2-point}).
 Since
 $F'(0,0,0)\ne 0$, we have that $\sigma_{D}(q)=0<m-1=\sigma_D(p)$.
 
 Now suppose that $q$ is a 2-point of $\psi^{-1}(\overline D)$ with $e_{11}e_{22}-e_{21}e_{12}= 0$ in (\ref{eq31}).
 
 We make a substitution 
 $$
 \hat x_1=x_1(z_1+\alpha)^{\phi_1},
 \hat y_1=y_1(z_1+\alpha)^{\phi_2},
 \hat z_1=z_1
 $$
 where $\alpha=\hat\alpha$ and $\phi_1,\phi_2\in\QQ$ satisfy
 $$
 \begin{array}{lll}
 0&=& a(\phi_1e_{11}+\phi_2e_{12}+e_{13})+b(\phi_1e_{21}+\phi_2e_{22}+e_{23})\\
 &=& \phi_1(ae_{11}+be_{21})+\phi_2(ae_{12}+be_{22})+ae_{13}+be_{23}.
 \end{array}
 $$
 We have $ae_{11}+be_{21}>0$ and $ae_{12}+be_{22}>0$ since $a,b>0$ and by the condition satisfied by  the $e_{ij}$ stated after (\ref{eq31}).
 
 Let
 $$
 \lambda_1=\phi_1e_{11}+\phi_2e_{12}+e_{13},
 \lambda_2=\phi_1e_{21}+\phi_2e_{22}+e_{23},
 \lambda_3=\phi_1e_{31}+\phi_2e_{32}+e_{33}.
 $$

 Then $x_1,y_1, z_1$ are  permissible  parameters at $q$ such that
 \begin{equation}\label{eq33}
 x=x_1^{e_{11}}y_1^{e_{12}}(z_1+\alpha)^{\lambda_1}, y=x_1^{e_{21}}y_1^{e_{22}}(z_1+\alpha)^{\lambda_2}, z=x_1^{e_{31}}y_1^{e_{32}}(z_1+\alpha)^{\lambda_3}
 \end{equation}
 with $\lambda_1,\lambda_2,\lambda_3\in\QQ$, and $a\lambda_1+b\lambda_2=0$.

Now suppose that $e_{11}>0$ and $e_{12}>0$, which is the case where $q$ is a 2-point of $D_{U_1}$.  
Write 
$$
u=((x_1^{e_{11}}y_1^{e_{12}})^{a}(x_1^{e_{21}}y_1^{e_{22}})^{b})^{\ell}
=(x_1^{t_1}y_1^{t_2})^{\ell_1}
$$
where $t_1,t_2,\ell_1\in \ZZ_+$ and $\mbox{gcd}(t_1,t_2)=1$.
 
 We then have an expression
 $$
 v=P((x_1^{t_1}y_1^{t_2})^{\frac{\ell_1}{\ell}})+x_1^{ce_{11}+de_{21}}y_1^{ce_{12}+de_{22}}G,
 $$
 where
 $$
 \begin{array}{lll}
 G&=&
 (z_1+\alpha)^{c\lambda_1+d\lambda_2}[\tau_0 x_1^{me_{31}}y_1^{me_{32}}(z_1+\alpha)^{m\lambda_3}\\
 &&+ \tau_2 x_1^{r_2e_{11}+s_2e_{21}+(m-2)e_{31}}y_1^{r_2e_{12}+s_2e_{22}+(m-2)e_{32}}(z_1+\alpha)^{r_2\lambda_1+s_2\lambda_2+(m-2)\lambda_3}+\cdots\\
 && + \tau_{m-1}x_1^{r_{m-1}e_{11}+s_{m-1}e_{21}+e_{31}}y_1^{r_{m-1}e_{12}+s_{m-1}e_{22}+e_{32}}(z_1+\alpha)^{\lambda_1r_{m-1}+\lambda_2s_{m-1}+\lambda_3}\\
 &&+\tau_m x_1^{r_me_{11}+s_me_{21}}y_1^{r_me_{12}+s_me_{22}}(z_1+\alpha)^{r_m\lambda_1+s_m\lambda_2}].
 \end{array}
 $$
  Let $x_1^sy_1^t$ be a  generator of $I\hat{\mathcal O}_{U_1,q}$. Let $G'=\frac{F}{x_1^{s}y_1^t}$.
 
 We will now establish that, with our assumptions, there is a unique element of
 the set $S$ consisting of 
 $z^m$,  and  
$$
\{x^{r_i}y^{s_i}z^{m-i}\mid 2\le i\le m\mbox{ and } \tau_{i}\ne 0\}
 $$
which is a generator of $I\hat{\mathcal O}_{U_1,q}$; that is, is equal to $x_1^sy_1^t$ times a unit in $\hat{\mathcal O}_{U_1,q}$.
Let $r_0=0$ and  $s_0=0$. Suppose that 
$x^{r_i}y^{r_i}z^{m-i}$ (with $0\le i\le m$) is a generator of 
$I\hat{\mathcal O}_{U_1,q}$.  We have $x^{r_i}y^{s_i}z^{m-i}=x_1^sy_1^t(z_1+\alpha)^{\gamma_i}$
where
$$
\begin{array}{l}
r_ie_{11}+s_ie_{21}+(m-i)e_{31}=s\\
r_ie_{12}+s_ie_{22}+(m-i)e_{32}=t\\
r_i\lambda_1+s_i\lambda_2+(m-i)\lambda_3=\gamma_i.
\end{array}
$$
Let 
\begin{equation}\label{eq6}
A=\left(\begin{array}{lll}
e_{11}&e_{21}&e_{31}\\
e_{12}&e_{22}&e_{32}\\
\lambda_1&\lambda_2&\lambda_3
\end{array}\right).
\end{equation}
We have 
\begin{equation}\label{eq7}
A\left(\begin{array}{l} r_i\\s_i\\m-i\end{array}\right)=\left(\begin{array}{l}s\\t\\ \gamma_i\end{array}\right).
\end{equation}
Let $\omega=\mbox{Det}(A)$. 
$$
A=
\left(\begin{array}{lll}
1&0&0\\
0&1&0\\
\phi_1&\phi_2&1\end{array}\right)
\left(\begin{array}{lll}
e_{11}&e_{21}&e_{31}\\
e_{12}&e_{22}&e_{32}\\
e_{13}&e_{23}&e_{33}
\end{array}\right)
$$
implies $\omega=\mbox{Det}(A)=\pm1$.

By Cramer's rule, we have
$$
\begin{array}{lll}
\omega(m-i)&=&\mbox{Det}\left(\begin{array}{lll}
e_{11}&e_{21}&s\\
e_{12}&e_{22}&t\\
\lambda_1&\lambda_2&\gamma_i
\end{array}\right)\\
&=& s\mbox{Det}
\left(\begin{array}{ll}
e_{12}&e_{22}\\
\lambda_1&\lambda_2
\end{array}\right)
-t\mbox{Det}
\left(\begin{array}{ll}
e_{11}&e_{21}\\
\lambda_1&\lambda_2
\end{array}\right)
+\gamma_i\mbox{Det}
\left(\begin{array}{ll}
e_{11}&e_{21}\\
e_{12}&e_{22}
\end{array}\right).
\end{array}
$$
Since $e_{11}e_{21}-e_{12}e_{22}=0$ by assumption, we have that
$$
i=m-\frac{1}{\omega}
\left(s\mbox{Det}
\left(\begin{array}{ll}
e_{12}&e_{22}\\
\lambda_1&\lambda_2
\end{array}\right)
-t\mbox{Det}
\left(\begin{array}{ll}
e_{11}&e_{21}\\
\lambda_1&\lambda_2
\end{array}\right)\right).
$$
In particular, there is a unique element  $x^{r_i}y^{r_i}z^{m-i}\in S$ which is a generator of $I\hat{\mathcal O}_{U_1,q}$. We have $x^{r_i}y^{s_i}z^{m-i}=x_1^st_1^t(z_1+\alpha)^{\gamma_i}$.

We thus have that $G=x_1^sy_1^t[g(z_1+\alpha)^{\gamma_i+c\lambda_1+d\lambda_2}+x_1\Omega_1+y_1\Omega_2]$
for some $\Omega_1,\Omega_2\in\hat{\mathcal O}_{U_1,q}$ and $0\ne g\in \mathfrak k$.

We now establish that we cannot have that $\gamma_i+c\lambda_1+d\lambda_2=0$ and  $x_1^{ce_{11}+de_{21}+s}y_1^{ce_{12}+de_{22}+t}$ is a power of $x_1^{t_1}y_1^{t_2}$. We will suppose that both of these conditions do hold, and derive a contradiction.
Now we know that $x^ay^b=x_1^{ae_{11}+be_{21}}y_1^{ae_{12}+be_{22}}$
is a power of $x_1^{t_1}y_1^{t_2}$. By (\ref{eq6}),  (\ref{eq7}) and our assumptions, we have that
$$
A\left(\begin{array}{l} a\\b\\0\end{array}\right)
$$
and
$$
A\left(\begin{array}{l} c+r_i\\d+s_i\\m-i\end{array}\right)
$$
are rational multiples of 
$$
\left(\begin{array}{l}
t_1\\t_2\\0\end{array}\right).
$$
Since $\omega=\mbox{Det}(A)\ne 0$, we have that $(c+r_i,d+s_i,m-i)$ is a rational multiple of $(a,b,0)$.
Thus $x^cy^dx^{r_i}y^{s_i}z^{m-i}$ is a power of $x^ay^b$, a contradiction to our assumption that $F$ 
satisfies (\ref{2-point}).

Let 
 $$
 \overline P(x_1^{t_1}y_1^{t_2})=\sum_{t_2i-t_1j=0} \frac{1}{i!j!}\frac{\partial (x_1^{ce_{11}+de_{21}}y_1^{ce_{12}+de_{22}}G)}{\partial x_1^i\partial y_1^j}(0,0,0)x_1^iy_1^j,
 $$
 and $F'=G'-\frac{\overline P(x_1^{t_1}y_1^{t_2})}{x_1^{ce_{11}+de_{21}+s}y_1^{ce_{12}+de_{22}+t}}$.
 Set

 $P'(x_1^{t_1}y_1^{t_2})=P((x_1^{t_1}y_1^{t_2})^{\frac{\ell_1}{\ell}})+\overline P(x_1^{t_1}y_1^{t_2})$.
 We have that 
 $$
 u=(x_1^{t_1}y_1^{t_2})^{\ell_1},
 v=P'(x_1^{t_1}y_1^{t_2})+x_1^{ee_{11}+fe_{21}}y_1^{ce_{21}+de_{22}}F'
 $$
 has the form (\ref{2-point})
 and   $\sigma_{D}(q)= 0\le m-2=\sigma_D(p)$.

 Now suppose that $q\in\psi^{-1}(p)$ is a 2-point of $\psi^{-1}(\overline D)$,
 $e_{11}e_{22}-e_{12}e_{21}=0$ in (\ref{eq31}), and $e_{11}=0$ or $e_{12}=0$. Without loss of generality,
 we may assume that $e_{12}=0$. $q$ is a 1-point of $D_{U_1}$, and we have permissible parameters (\ref{eq33}) at $q$.
 Since $\mbox{Det}(e_{ij})=\pm 1$, we have that $e_{32}=1$, and $e_{11}e_{23}-e_{21}e_{13}=\pm 1$.
 Replacing $y_1$ with $y_1(z_1+\alpha)^{\lambda_3}$ in (\ref{eq33}), we find permissible parameters $x_1,y_1,z_1$ at $q$ such that
 \begin{equation}
 \label{eq130}
 x=x_1^{e_{11}}(z_1+\alpha)^{\lambda_1},\,\,
 y=x_1^{e_{21}}(z_1+\alpha)^{\lambda_2},\,\,
 z=x_1^{e_{31}}y_1,
 \end{equation}
 with $e_{11}, e_{21}>0$ and $a\lambda_1+b\lambda_2=0$.
 We have
 $$
 \begin{array}{lll}
 u&=& x_1^{(ae_{11}+be_{21})l}=x_1^{l_1}\\
 v&=& P(x_1^{ae_{11}+be_{21}})+x_1^{ce_{11}+de_{21}}G
 \end{array}
 $$
 where
 $$
 \begin{array}{lll}
 G&=& (z_1+\alpha)^{c\lambda_1+d\lambda_2}[\tau_0x_1^{me_{31}}y_1^m+\tau_2x_1^{r_2e_{11}+s_2e_{21}+(m-2)e_{31}}
 y_1^{m-2}(z_1+\alpha)^{r_2\lambda_1+s_2\lambda_2}+\cdots\\
 &&+\tau_{m-1}x_1^{r_{m-1}e_{11}+s_{m-1}e_{21}+e_{31}}y_1(z_1+\alpha)^{r_{m-1}\lambda_1+s_{m-1}\lambda_2}\\
 &&+\tau_mx_1^{r_me_{11}+s_me_{21}}(z_1+\alpha)^{r_m\lambda_1+s_m\lambda_2}].
 \end{array}
 $$
 Since $I\hat{\mathcal O_{U_1,q}}$ is principal and $\tau_m$ or $\tau_{m-1}\ne 0$, we have that $x_1^{r_me_{11}+s_me_{21}}$ is a generator of $I\hat{\mathcal O}_{U_1,q}$ if $\tau_m\ne 0$,  and $x_1^{r_{m-1}e_{11}+s_{m-1}e_{21}+e_{31}}y_1$ is a generator of $I\hat{\mathcal O}_{U_1,q}$ if $\tau_m=0$ 
 and $\tau_{m-1}\ne 0$.
 
 First suppose that $\tau_m\ne 0$ so that
 $$
 G=x_1^{r_me_{11}+s_me_{21}}[g_m(z_1+\alpha)^{(c+r_m)\lambda_1+(d+s_m)\lambda_2}+x_1\Omega+y_1\Omega_2]
 $$
 with $0\ne g_m\in\mathfrak k$, $\Omega_1,\Omega_2\in\hat{\mathcal O}_{U_1,q}$.
 Since $\lambda_1,\lambda_2$ are not both zero, $a\lambda_1+b\lambda_2=0$ and $a(d+s_m)-b(c+r_m)\ne 0$, we have that $(c+r_m)\lambda_1+(d+s_m)\lambda_2\ne 0$.
 Let $\overline P(x_1)=G(x_1,0,0)$, and $P'(x_1)=P(x_1^{ae_{11}+be_{21}})+\overline P(x_1)$.
 Let 
 $$
 F'=\frac{1}{x_1^{ce_{11}+de_{21}}}(G-\overline P(x_1)).
 $$
 Then 
 $$
 \begin{array}{lll}
 u&=& x_1^{l_1}\\
 v&=& P'(x_1)+x_1^{ce_{11}+d e_{21}}F'
 \end{array}
 $$
 is of the form (\ref{1-point}) with $\mbox{ord }F'(0,y_1,z_1)=1$. Thus $\sigma_D(q)=0<\sigma_D(p)$.
 
 Now suppose that $\tau_m=0$, so that 
 $$
 G=x_1^{r_{m-1}e_{11}+s_{m-1}e_{21}+e_{31}}[
 g_{m-1}y_1(z_1+\alpha)^{(c+r_{m-1})\lambda_1+(d+s_{m-1})\lambda_2}+x_1\Omega_1]
 $$
 with $0\ne g_{m-1}\in\mathfrak k$ and $\Omega_1\in\hat{\mathcal O}_{U_1,q}$. Thus $\sigma_D(q)=0<\sigma_D(p)$.

 The final case is when $q$ is a 3-point for $\psi^{-1}(\overline D)$, so that $q$ is a 3-point or a 2-point of $D_{U_1}$. Then we have permissible parameters $x_1,y_1,z_1$ at $q$
 such that
 $$
 x=x_1^{e_{11}}y_1^{e_{12}}z_1^{e_{13}},
 y=x_1^{e_{21}}y_1^{e_{22}}z_1^{e_{23}},
 z=x_1^{e_{31}}y_1^{e_{32}}z_1^{e_{33}}
 $$
 with $\omega=\mbox{Det}(e_{ij})=\pm 1$. 
 Thus there is a unique element of
 the set $S$ consisting of 
 $z^m$ and  
$$
\{x^{r_i}y^{s_i}z^{m-i}\mid 2\le i\le m\mbox{ and }\overline \tau_{i}\ne 0\}
 $$
which is a generator $x_1^{s_1}y_1^{s_2}z_1^{s_3}$ of $I\hat{\mathcal O}_{U',q}$. 
Thus $\sigma_{D}(q)=0$ if $q$ is a 3-point of $D_{U_1}$. If $q$ is a 2-point of $D_{U_1}$, we may assume that $e_{13}=e_{23}=0$. Then $e_{33}=1$. Since $\tau_m\ne 0$ or $\tau_{m-1}\ne 0$, we calculate that $\sigma_D(q)=0$.

\end{proof}

\section{Global reduction of $\sigma_D$}\label{Section5}

\begin{Lemma}\label{SingLoc}
Suppose that $X$ is 2-prepared and $p\in X$ is 3-prepared. Suppose that $r=\sigma_D(p)>0$.
\begin{enumerate}
\item[a)] Suppose that $p$ is a 1-point. Then there exists a unique curve $C$ in $\mbox{Sing}_1(X)$ containing $p$.
The curve $C$ is contained in $\mbox{Sing}_r(X)$.
If $x,y,z$ are permissible parameters at $p$ giving an expression (\ref{eq3}) or (\ref{eq4}) at $p$, then $x=z=0$ are formal local equations of $C$ at $p$.
\item[b)] Suppose that $p$ is a 2-point and $C$ is  a curve in $\mbox{Sing}_r(X)$ containing $p$. If $x,y,z$ are permissible parameters at $p$ giving an expression (\ref{eq2}) at $p$, then $x=z=0$ or $y=z=0$ are formal local equations of $C$ at $p$.
\end{enumerate}
\end{Lemma}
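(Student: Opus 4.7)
The plan is to combine the explicit formulas (\ref{li1}) and (\ref{li2}) for $\overline{\mathcal I}_p$ modulo the local equations of the components of $D$ through $p$ with the 3-prepared normal forms (\ref{eq2})--(\ref{eq4}), to read off the local structure of $\mbox{Sing}_r(X)$ at $p$. Upper semicontinuity of $\sigma_D$ will then promote a generic computation on the candidate curve $V(x,z)$ to the global containment $C\subseteq\mbox{Sing}_r(X)$.

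For part (a), the first step is to observe that in form (\ref{eq3}) or (\ref{eq4}) every term of $F$ other than $\tau_0 z^m$ carries a positive power of $x$; hence $F(0,y,z)=\tau_0(0,y,z)z^m$, so $\partial F/\partial z$ has $z$-order exactly $m-1$ at $x=0$ while $\partial F/\partial y$ has $z$-order at least $m$. Substituting into (\ref{li1}) yields $(\overline{\mathcal I}_p+(x))\hat{\mathcal O}_{X,p}=(x,z^{m-1})$. Because $\mbox{Sing}_1(X)\subseteq D=V(x)$ near a 1-point, the formal support of $\mbox{Sing}_1(X)$ at $p$ equals $V(x,z)$, a single irreducible formal curve $C$ with equations $x=z=0$; this establishes uniqueness. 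For the containment $C\subseteq\mbox{Sing}_r(X)$, I would translate $y\mapsto y+y_0$ for a general $y_0\in\mathfrak k$ to a nearby 1-point $q$ of $C$: the form (\ref{eq3})/(\ref{eq4}) is preserved after absorbing the single resulting pure-$x$ term into $P$, and the $\tau_0 z^m$ block is unchanged, so $\sigma_D(q)=m-1=r$ on a dense open subset of $C$. Upper semicontinuity of $\sigma_D$ then promotes this to $C\subseteq\mbox{Sing}_r(X)$.

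For part (b), $C$ is an irreducible curve in $D=V(x)\cup V(y)$ and hence lies entirely in $V(x)$ or in $V(y)$; 2-preparation forces $\sigma_D=0$ at the generic point of the 2-curve $V(x,y)$, ruling out $C=V(x,y)$. Assume $C\subseteq V(x)$. Generic points of $C$ are 1-points $q=(0,y_0,z_0)$ with $y_0\ne 0$. In an \'etale cover where $y^{b/a}$ is available, setting $\hat x=xy^{b/a}$ gives $u=\hat x^{la}$; after translating to center at $q$ and absorbing the pure-$\hat x$ part of the shifted $v$ into a new polynomial, one obtains a 1-point form at $q$ with residual series $F'$. Then $\partial F'/\partial\bar z(0,0,0)$ is a unit multiple of $\partial F/\partial z(0,y_0,z_0)$, and $\partial F/\partial z(0,y,z)$ is a nonzero power series in $(y,z)$ because the condition $r_i+s_i>0$ ensures that the coefficient of $y^0 z^{m-1}$ equals $m\tau_0(0,0,0)\ne 0$. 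So this derivative does not vanish at generic $(y_0,z_0)$ with $z_0\ne 0$. Hence $\sigma_D(q)=0<r$ for generic $q\in V(x)$ with $z(q)\ne 0$, which forces the generic point of $C$ to lie in $V(x,z)$, and therefore $C=V(x,z)$ locally at $p$. A symmetric argument gives $C=V(y,z)$ when $C\subseteq V(y)$.

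The main technical hurdle is the \'etale parameter change in part (b), which requires adjoining the $a$-th root of $y^b$ to define $\hat x=xy^{b/a}$ and verifying that $\hat x,y,z$ (after translation) form a genuine 1-point permissible parameter system at $q$. The saving grace is that $\hat x=0$ coincides with $x=0$, and the final inequality $\sigma_D(q)<r$ is controlled entirely by the nonvanishing $z^{m-1}$-monomial of $\partial F/\partial z(0,y,z)$, so the combinatorics of the remaining blocks $\tau_i x^{r_i}y^{s_i}z^{m-i}$ never obstructs the argument.
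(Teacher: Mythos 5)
Your part a) is correct and is essentially the paper's own argument: from (\ref{li1}) and the forms (\ref{eq3}), (\ref{eq4}) one gets $\sqrt{(x,\frac{\partial F}{\partial y},\frac{\partial F}{\partial z})}=(x,z)$ formally at $p$, and translating in $y$ along $x=z=0$ preserves a form whose restriction to $x=0$ is $\tau_0z^m$, so $\sigma_D=r$ at nearby points of the curve; closedness of $\mbox{Sing}_r(X)$ then gives the containment.

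Part b), however, has a genuine gap, and it is precisely at the point where the paper invokes Theorem \ref{2-pttheorem}. Showing that $\frac{\partial F}{\partial z}(0,y,z)$ is a nonzero series (its $y^0z^{m-1}$-coefficient being $m\tau_0(0,0,0)$) only shows that its zero locus is a proper closed subset of the surface $x=0$, i.e.\ that $\sigma_D(q)=0$ at a \emph{generic} 1-point $q$ of that component. But $C$ is not a generic curve: it is a specific curve inside $\mbox{Sing}_r(X)\subset\mbox{Sing}_1(X)\cap\{x=0\}$, and that locus can contain curves through $p$ other than $x=z=0$ (for instance, when some $r_i=0$ in (\ref{eq2}), components of $V(x,\frac{\partial F}{\partial z})$ such as $mz^{2}+(m-2)\tau_2y^{s_2}=0$ pass through $p$ and are distinct from $V(x,z)$ and $V(x,y)$). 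So the inference ``$\sigma_D(q)=0$ for generic $q\in V(x)$ with $z(q)\ne 0$, which forces the generic point of $C$ to lie in $V(x,z)$'' does not follow; you would need $\sigma_D(q)<r$ at \emph{every} point of a neighborhood of $p$ off the three formal curves $V(x,y)$, $V(x,z)$, $V(y,z)$, or at least at the generic point of any curve of $\mbox{Sing}_r(X)$ through $p$ other than these. That stronger statement is exactly what the paper extracts from Theorem \ref{2-pttheorem}: the toroidal morphism $\Psi:U_1\rightarrow U$ for $\overline D$ (given by $xyz=0$) which principalizes $I$ satisfies $\sigma_D<r$ at all points of $U_1$ over a neighborhood of $p$, and $\Psi$ is an isomorphism away from the 2-curves of $\overline D$; hence $\mbox{Sing}_r$ near $p$ is contained in the union of those three curves, and $V(x,y)$ is excluded since $X$ is 2-prepared. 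Your generic computation only recovers the much weaker fact that $\mbox{Sing}_1(X)\cap\{x=0\}$ is a proper closed subset of $\{x=0\}$, so a real argument (either quoting Theorem \ref{2-pttheorem} as the paper does, or a genuinely finer analysis of all the $(m-1)$-st order partials of the recentred $F$ along an arbitrary candidate curve) is still missing.
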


\begin{proof} We first prove a). Let $\mathcal I\subset \mathcal O_X$ be the ideal sheaf defining the reduced scheme $\mbox{Sing}_1(X)$. Then $\mathcal I_p\hat{\mathcal O}_{X,p}=
\sqrt{(x,\frac{\partial F}{\partial y},\frac{\partial F}{\partial z})}=(x,z)$ is an ideal on $U$ defining $\mbox{Sing}_1(U)$. Thus the unique curve $C$ in $\mbox{Sing}_1(X)$ through $p$ has (formal) local equations
$x=z=0$ at $p$. At  points near $p$ on $C$, a  form (\ref{eq3}) or (\ref{eq4}) continues to hold with $m=r+1$. Thus the curve is in $\mbox{Sing}_r(X)$.

 We now prove b). Suppose that $C\subset \mbox{Sing}_r(X)$ is a curve containing $p$. By Theorem \ref{2-pttheorem}, there exists a toroidal morphism $\Psi:U_1\rightarrow U$ where $U$ is an \'etale cover of an affine
neighborhood of $p$, and $\overline D$ is the local toroidal structure on $U$ defined (formally at $p$) by $xyz=0$, such that all points $q$ of $U_1$ satisfy $\sigma_D(q)<r$. Hence the strict transform on $U_1$ of the preimage of $C$ on $U$ must be empty. Since
$\Psi$ is toroidal for $\overline D$ and $X$ is 3-prepared at $p$, $C$ must have local equations $x=z=0$ or
$y=z=0$ at $p$.
\end{proof}

\begin{Definition}\label{canonical}
Suppose that $X$ is 3-prepared. We  define a canonical sequence of blow ups over a curve in $X$, under the following conditions:
\begin{enumerate}
\item[1)] Suppose that $C$ is a curve in $X$ such that $t=\sigma_D(q)>0$ at the generic point $q$ of $C$,
and all points of $C$ are 1-points of $D$. Then we have that $C$ is nonsingular and $\sigma_D(p)=t$ for all $p\in C$ by  Lemma \ref{SingLoc}.  By Lemma \ref{SingLoc} and Theorem \ref{1-pointres} or \ref{1-pointspec}, there exists a unique
minimal sequence of permissible blow ups of sections over $C$, $\pi_1:X_1\rightarrow X$, such that $X_1$ is 2-prepared and $\sigma_D(p)<t$ for all $p\in\pi_1^{-1}(C)$. We will call the morphism $\pi_1$ the canonical sequence of blow ups over $C$.
\item[2)] Suppose that $C$ is a permissible curve in $X$ which contains a 1-point such that $\sigma_D(p)=0$ for all $p\in C$, and a condition 1), 4) or 6) of Lemma \ref{LemmaC} holds at all $p\in C$. Let $\pi_1:X_1\rightarrow X$ be the blow up of $C$. Then by Lemma \ref{LemmaE}, $X_1$ is 3-prepared and $\sigma_D(p)=0$ for $p\in\pi_1^{-1}(C)$.
We will call the morphism $\pi_1$ the canonical blow up of $C$.
\end{enumerate}
\end{Definition}

\begin{Theorem}\label{maintheorem}  

Suppose that $X$ is 2-prepared. Then there exists a
sequence of permissible blowups $\psi:Y\rightarrow X$ such that $Y$ is  prepared.
\end{Theorem}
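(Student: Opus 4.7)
The natural approach is induction on $r = \Gamma_D(X)$. If $r = 0$ then $X$ is already prepared. Assume $r > 0$ and that the result holds whenever $\Gamma_D < r$. It then suffices to construct a sequence of permissible blow-ups $\pi: X_1 \to X$ with $X_1$ 2-prepared and $\Gamma_D(X_1) < r$.

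By Proposition \ref{Step3.1}, we may first perform a sequence of permissible blow-ups (without raising $\sigma_D$) so that $X$ is 3-prepared. Since $X$ is 2-prepared, $\mbox{Sing}_r(X)$ is closed, contains no 3-points, and has dimension at most one; by Lemma \ref{Specialize} it decomposes as a finite disjoint union, away from finitely many exceptional points, of curves $C_1,\ldots,C_t$ whose general points are 1-points of $D$ with $\sigma_D = r$, together with finitely many isolated 2-points $p_1,\ldots,p_s$ with $\sigma_D = r$. By Lemma \ref{SingLoc}(a), each $C_j$ consists entirely of 1-points with $\sigma_D = r$, and by Lemma \ref{SingLoc}(b) the formal structure of $C_j$ near any 2-point it abuts is controlled by the local form (\ref{eq2}).

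For each curve $C_j$, Definition \ref{canonical}(1) (supplied by Theorem \ref{1-pointres}, or Theorem \ref{1-pointspec} in the (\ref{eq4}) case) provides a canonical sequence of permissible blow-ups over $C_j$ whose composition drops $\sigma_D$ strictly below $r$ at every point of the fiber, while keeping the result 2-prepared. For each 2-point $p_i$, Theorem \ref{2-pttheorem} together with the local resolver framework of Section \ref{Section4} provides, on an \'etale neighborhood $U_{p_i}$ equipped with the toroidal structure $\overline D_{p_i}$ and ideal $I_{p_i}$, a toroidal morphism realizing the reduction via finitely many blow-ups of centers tracked by the valuations $\nu_{p_i}^k$; the clopen set $W(Y, p_i)$ becomes empty once $I_{p_i}\mathcal O_Y$ is invertible, at which stage every point above $p_i$ is 2-prepared with $\sigma_D < r$.

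The principal difficulty is to patch all these local constructions into a single globally defined permissible sequence on $X$. My plan is to proceed in stages: first execute the canonical sequences over each $C_j$ (these are already globally defined since the $C_j \subset X$); then globalize the local resolvers at each $p_i$ by blowing up the globally defined centers $C(Y, \nu_{p_i}^k)$ dictated by the valuations, in each stage blowing up only those whose image still lies in $W(Y, p_i)$. After each round, reapply Proposition \ref{local3prep} to the components of $D$ that have been affected in order to restore 3-preparation without increasing $\sigma_D$ (and without disturbing the already-resolved loci, by conclusions (2)--(6) of that proposition). One must verify that (a) the centers blown up at each stage are permissible, using Lemmas \ref{LemmaA}, \ref{LemmaD}, and \ref{LemmaE} to keep the situation 2-prepared at prepared points that happen to lie on the centers; (b) the local pictures over the various $p_i$ and $C_j$ are mutually non-interfering, since they meet in only a finite set of 2-points handled by the local 3-preparation; and (c) the process terminates, which follows from the termination of each local resolver and of each canonical sequence. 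The main obstacle is arranging the order of blow-ups so that every stage is globally permissible and all three of these statements hold simultaneously; once that is done, induction on $r$ completes the argument.
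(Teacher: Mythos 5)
Your overall strategy (3-prepare via Proposition \ref{Step3.1}, reduce $\sigma_D$ over curves in $\mbox{Sing}_r$ by the canonical sequences of Theorems \ref{1-pointres} and \ref{1-pointspec}, reduce at 2-points via the local resolvers of Theorem \ref{2-pttheorem}, then patch) is the same as the paper's, but the proposal stops exactly where the real proof begins: you name the patching/ordering problem as ``the main obstacle'' and then assume it can be arranged, so the argument is not actually carried out. Two concrete points make this a genuine gap rather than a routine verification. First, your structural picture is off: by Lemma \ref{SingLoc}(b) the curves in $\mbox{Sing}_r(X)$ are not disjoint from the 2-points with $\sigma_D=r$; a curve in $\mbox{Sing}_r$ through such a 2-point $p$ is precisely one of the valuation centers $C(X,\nu_p^j)$, so the canonical sequence over that curve cannot by itself drop $\sigma_D$ below $r$ on the whole fiber — near $p$ the reduction is only achieved once the ideal $I_p$ of the resolver is principalized, and until then one must track the clopen locus $W(\cdot,p)$ and ensure every blow up performed there is simultaneously permissible for $D$ and toroidal for the merely \'etale-local structure $\overline D_p$ (which contains the auxiliary hypersurface $z=0$ not belonging to $D$). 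Arranging this compatibility is what the conditions (\ref{cond1}), (\ref{cond2}) and the re-3-preparation steps via Proposition \ref{local3prep} (performed away from the points $\Lambda(\cdot,\nu_p^j)$) are for, and nothing in your sketch supplies it.

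Second, and more seriously, your termination claim (c) — ``follows from the termination of each local resolver and of each canonical sequence'' — overlooks that the process spawns new bad points: after blowing up over a curve $C(\cdot,\nu_p^j)$, new 2-points $q$ with $0<\sigma_D(q)\le r-1$ appear on these curves (the sets $T_{i,l}$ in the paper's proof), each requiring its own local resolver, whose centers in turn create further such points. Your plan treats the finite set of bad 2-points as fixed once and for all, so it does not even formulate, let alone prove, why this spawning stops. The paper handles it with the doubly indexed sequence (\ref{seq1}) and the second induction (\ref{seq3})--(\ref{seq4}), whose whole purpose is the inequality $\mbox{max}\{1,r-i\}\le\sigma_D(q)\le r-l-1$ for $q\in T_{i,l}$ (and its analogue for $T^j_{i,k}$), forcing the invariant at newly created bad points to be strictly squeezed and eventually making the exceptional sets empty, at which point 4.2) gives that $X^r$ is prepared. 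Without an invariant of this kind and the accompanying bookkeeping (conditions 2.1)--2.6) and 5.1)--5.6)), the induction on $r$ you propose does not close.
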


Before proving this theorem, we introduce some notation, and give some idea of the main difficulty of the proof.

Suppose that  $p\in X$ is a 2-point such that $X$ is 3-prepared at $p$ and $\sigma_D(p)=r>0$. We can then define 
 $(U_p,\overline D_p,I_p,\nu_p^1,\nu_p^2)$ as in Theorem \ref{2-pttheorem}, where $\nu_p^t$
are valuations on $U_p$ which dominate the two curves $C_1$, $C_2$ which are the intersection of $E_p$ with $D_{U_p}$ on $U_p$ (where $\overline D_p=D_{U_p}+E_p$), and which have the property that if $\pi:V\rightarrow U_p$ is a birational morphism, then the center
$C(V,\nu_p^t)$ of $\nu_p^t$ on $V$ is the unique curve on the strict transform of $E_p$ on $V$ which dominates $C_t$. We will call
$(U_p,\overline D_p,I_p,\nu_p^1,\nu_p^2)$ a local resolver.
We will think of $U_p$ as a germ, so we will
feel free to replace $U_p$ with a smaller neighborhood of $p$ whenever it is convenient.

If $\pi:Y\rightarrow X$ is a birational morphism, then we define 
$C(Y,\nu_p^i)$ to be the closed curve in $Y$ which is the center of $\nu_p^t$ on $Y$. 
We define $\Lambda(Y,\nu_p^t)$ to be the point $C(Y,\nu_p^t)\cap \pi^{-1}(p)$.
This defines a valuation which is composite with $\nu_p^t$.

We define $W(Y,p)$ to be the germ in  $Y$ of the image of points in $\pi^{-1}(U_p)=Y\times_XU_p$ such that $I_p\mathcal O_Y\mid \pi^{-1}(U_p)$ is not invertible. $W(Y,p)$ is a subset of the union of the set of generic points of 2-curves for $\overline D_p$ in $Y\times _XU_p$, and the set of all points of
$\pi^{-1}(p)$.

If $\pi:Y\rightarrow X$ is a morphism, 
define  $\mbox{Preimage}(Y,Z)=\pi^{-1}(Z)$ for
$Z$ a subset of $X$.

Suppose that $\pi:Y\rightarrow X$ is a composition of permissible blow ups which is toroidal for $\overline D_p$ above 
$\overline Y:=\pi^{-1}(U_p)$. The blow up of a three point for $\overline D_p$ or of a 2-curve for $\overline D_p$ which $\pi$ contracts to $p$
extends readily to a permissible blow up of $Y$, as does a permissible blow up of a 2-curve of $D$. The only remaining case of the blow up of a
3-point or 2-curve of $\overline D_p$ on $\overline Y$ is the blow up of one of the  two curves $C(Y,\nu_p^1)$ or $C(Y,\nu_p^2)$. Of course such a curve may
only be permissible over $U_p$. 

We can principalize $I_p$ above $U_p$ by the following algorithm:
First perform any sequence $\overline Y\rightarrow U_p$ consisting of blow ups of 3-points of $\overline D_p$ and 2-curves of $\overline D_p$, 
with the restriction that the map is an isomorphism over the generic points of $C(U_p,\nu_p^1)$ and $C(U_p,\nu_p^2)$.
Now construct $\overline Y_1\rightarrow \overline Y$ be blowing  up  $C(\overline Y,\nu_p^t)$ for some $t$, such that $I_p\mathcal O_{\overline Y,\eta}$ is not principal, where $\eta$ is the generic point of $C(\overline Y,\nu_p^t)$. Then once again perform any sequence of
blow ups $\overline Y_2\rightarrow \overline Y_1$ consisting of blow ups of 3-points of $\overline D_p$ and 2-curves of $\overline D_p$, 
with the restriction that the map is an isomorphism over the generic points of $C(\overline Y_1,\nu_p^1)$ and $C(\overline Y_2,\nu_p^2)$.
Now we define $\overline Y_3\rightarrow \overline Y_2$ to be the blow up of $C(\overline Y_2,\nu_p^t)$ for some $t$, such that $I_p\mathcal O_{\overline Y_2,\xi}$ is not principal, where $\xi$ is the generic point of $C(\overline Y_2,\nu_p^t)$.
A chain of blowups of this type will eventually produce a $\overline Y_n$ such that $I_p\mathcal O_{\overline Y_n,\eta_t}$ is  principal, where
$\eta_t$ is the generic point of $C(\overline Y_n,\nu_p^t)$ for $t=1,2$. If this has been accomplished, then we may perform a final sequence of blowups $\overline Y_{n+1}\rightarrow \overline Y_n$,
consisting of blow ups of 3-points of $\overline D_p$ and 2-curves of $\overline D_p$, 
with the restriction that the map is an isomorphism over the generic points of $C(\overline Y_1,\nu_p^1)$ and $C(\overline Y_2,\nu_p^2)$, such that
$I_p\mathcal O_{\overline Y_{n+1}}$ is locally principal. We thus have that $\sigma_D(q)<r$ for all points $q\in \overline{Y}_{n+1}$ (by Theorem \ref{2-pttheorem}). 

The essential difficulty in extending this local argument to a proof of Theorem \ref{maintheorem} is to extend the local blow ups of $C(\overline Y_i,\nu_p^t)$ to permissible global blow ups above $X$, which do not interfere with the the local resolution procedures above other points of $X$.

 We will construct   sequences 
\begin{equation}\label{eqmt12}
Y_n\rightarrow Y_{n-1}\rightarrow \cdots\rightarrow Y_0=X
\end{equation}
where each $Y_i$ has an associated finite set $S(Y_i)$, which we will often abbreviate as $S(i)$. We require that $S(0)=\emptyset$, and that
$$
\mbox{$S(i)$ is contained in the disjoint union of the $Y_j$ with $j<i$.}
$$
Each morphism $Y_{i+1}\rightarrow Y_i$ is  a  permissible blow up, or the identity map with
$Y_{i+1}=Y_i$ and $S(i+1)=S(i)\cup\{p\}$ for some $p\in Y_i$, which is a 2-point for $D$ with $\sigma_D(p)>0$,  such that $Y_i$ is 3-prepared at $p$, and we introduce a local resolver $(U_p,\overline D_p,\nu_p^1,\nu_p^2)$ at $p$, or $Y_{i+1}=Y_i$ and $S(i+1)$ is a subset of $S(i)$.
We require that $S(i)$ be contained in the disjoint union of the $Y_j$ with $j<i$, and
$p\in S(i)\cap Y_j$ implies $p$ is  a 3-prepared 2-point in $Y_j\setminus \left(\cup_{p'\in S(j)} W(Y_j,p')\right)$, with $\sigma_D(p)>0$, and there is a given
local resolver $(U_p,\overline D_p,\nu_p^1,\nu_p^2)$ in $Y_j$ for $p$. Let $W(Y_i)=\cup_{p'\in S(i)} W(Y_i,p')$. We will often write
$W(i)=W(Y_i)$.
We require that each morphism $Y_{i+1}\rightarrow Y_i$ be an {\it admissible blow up}, which we define to be a permissible blow up such that for all $p\in S(i)$, $Y_{i+1}\rightarrow Y_i$ is toroidal for $\overline D_p$ above a neighborhood of $W(Y_i,p)$.

A sequence (\ref{eqmt12}) will be called an admissible sequence.  In the first approximation, $S(Y_i)$ may be seen as the set of ``bad points'' $p\in Y_j$ (for $j<i$) with
``bad preimages'' in $Y_i$. Their preimages are not fully 3-prepared, or contain  singular points or $I_p\mathcal O_{Y_i}$ is not invertible.
By performing a succession of admissible sequences, we want to obtain that $S(Y_n)=\emptyset$.

Define
$$
\sigma(Y_i):=\max\{ \{\sigma_D(p)\mid p\in Y_i\setminus W(i) \}\cup \{\sigma_D(q)\mid q\in S(i)\}\}.
$$

\begin{Definition}\label{goodcurve} Suppose that  $Y_{i_0}\rightarrow X$ is an admissible sequence, and $C$ is a curve in $D_{Y_i}$ which contains a 1-point of $D$.
Let $\eta$ be the generic point of $C$.
$C$ is called a good curve if one of the following conditions hold:
\begin{enumerate}
\item[1.] If $\sigma_D(\eta)=0$, then $\sigma_D(p)=0$ for all $p\in C\setminus W(i_0)$ and $p\in C\cap W(i_0)$ implies $p=\Lambda(Y_{i_0},\nu_b^t)$ and $C=C(Y_{i_0},\nu_b^t)$ for some
$b\in S(i_0)$ and $t$.
\item[2.] If $\sigma_D(\eta)>0$, then $C\setminus W(i_0)$ is a set of 3-prepared 1-points and $p\in C\cap W(i_0)$ implies $p=\Lambda(Y_{i_0},\nu_b^t)$, $C=C(Y_{i_0},\nu_b^t)$ for some $b\in S(i_0)$ and $t$ (in particular, $p$ is a 2-point of $D$).
\end{enumerate}
\end{Definition}

We will be particularly concerned with sequences (\ref{eqmt12}) which admit expressions 
\begin{equation}\label{eqmt1}
Y=Y_n=Y_{i_s}\rightarrow  \cdots\rightarrow Y_{i_2}\rightarrow Y_{i_1}\rightarrow Y_0=X
\end{equation}
where each $Y_{i_{j+1}}\rightarrow Y_{i_j}$ is the sequence 
$$
Y_{i_{j+1}}\rightarrow Y_{i_{j+1}-1}\rightarrow \cdots\rightarrow Y_{i_j+1}\rightarrow Y_{i_j},
$$
such that each of the $Y_{i_{j+1}}\rightarrow Y_{i_j}$ in (\ref{eqmt1}) is one of the following, called an admissible transformation:
\begin{enumerate}
\item[1.] The blow up of a prepared point of $D$, and $S(i_{j+1})=S(i_j)$.
\item[2.]  The blow up of a 3-point or a 2-curve  of $D$, and $S(i_{j+1})=S(i_j)$.
\item[3.] The blow up of a 3-point or 2-curve for $\overline D_p$, contained in $W(Y_{i_j},p)$  (with $p\in S(i_j)\cap Y_k$), which contracts to $p$ under $Y_{i_j}\rightarrow Y_k$ and
$S(i_{j+1})=S(i_j)$.
\item[4.] $Y_{i_{j+1}}=Y_{i_j}$ and $S(i_{j+1})=S(i_j)\cup\{p\}$ for some $p\in Y_{i_j}\setminus W(i_j)$, which is a 2-point for $D$ such that $Y_{i_j}$ is 3-prepared at $p$, and $\sigma_D(p)>0$, and we introduce a local resolver $(U_p,\overline D_p,\nu_p^1,\nu_p^2)$ at $p$. 
\item[5.] The sequence of permissible blow ups of Proposition \ref{local3prep}, applied to a union of irreducible components $E$ of $D$ such that 
all 2 and 3 points for $D$ in a neighborhood of $E$ are 3-prepared, and
$W(i)\cap E$ contains only
a finite set of 2-points (which we take to be the set $B$ of Proposition \ref{local3prep}), over which $Y_{i_{j+1}}\rightarrow Y_{i_j}$ is an isomorphism. The effect of this transformation is to make all points in
a neighborhood of $\mbox{Preimage}(Y_{i_{j+1}},E)$ 3-prepared. We have $S(i_{j+1})=S(i_j)$.
\item[6.] The ``canonical sequence of blow ups''  above a good curve $C$ in $D_{Y_{i_j}}$ (This transformation will be defined after Lemma \ref{blowupproc}).
We will generally
have $S(i_{j+1})\setminus S(i_j)\ne \emptyset$.
\item[7.] $Y_{i_{j+1}}=Y_{i_j}$ and $S(i_{j+1})=S(i_j)\setminus \{p\in S(i_j)\mid W(Y_{i_j},p)=\emptyset\}$.
\end{enumerate}

\begin{Lemma}\label{mt16} Suppose that (\ref{eqmt1}) is an admissible sequence consisting entirely of admissible transformations of types 1 - 5 and 7. Then  for $0\le j\le n$ in (\ref{eqmt1}), the following conditions (\ref{eqmt7}) - (\ref{eqmt11}) hold:
 \begin{equation}\label{eqmt7}
\mbox{The closed sets $W(Y_{i_j},p)\cap \mbox{Preimage}(Y_{i_j},p)$ are pairwise disjoint for $p\in S(i_j)$. }
\end{equation}

\begin{equation}\label{eqmt8}
\mbox{All points of $Y_{i_j}\setminus W(i_j)$ are 2-prepared.}
\end{equation}

\begin{equation}\label{eqmt9}
\mbox{For $p\in  S(i_j)$, $Y_{i_{j+1}}\rightarrow Y_{i_j}$ is toroidal for $\overline D_p$ above a neighborhood of $W(Y_{i_j},p)$. }
\end{equation}

\begin{equation}\label{eqmt10}
\sigma(Y_{i_{j+1}})\le \sigma(Y_{i_j})
\end{equation}

\begin{equation}\label{eqmt11}
\begin{array}{l}
\mbox{Suppose that $r=\sigma(Y_{i_j})$ and $\sigma(Y_{i_j}\setminus W(i_j))<r$.
Then $\sigma(Y_{i_{j+1}}\setminus W(i_{j+1}))<r$}\\
\mbox{and if $p\in S(i_{j+1})\setminus S(i_j)$, then $\sigma_D(p)<r$.}
\end{array}
\end{equation}
\end{Lemma}

 \begin{proof} For admissible transformations of types 1 - 5 (\ref{eqmt7}) - (\ref{eqmt10}) hold since $D\subset \overline D_p$ for all $p\in S(i_j)$, and
 by Lemma \ref{LemmaA} (for transformations of type 1), Lemma \ref{Torgood} (for transformations of type 2), Theorem \ref{2-pttheorem} (for transformations of type 3) and Propositions \ref{local3prep} (for transformations of type 5).
 
 (\ref{eqmt11}) holds for all admissible transformations of types 1 - 5, since for $p\in S(i_j)$, $0<\sigma_D(p)\le r$. Thus by  Theorem \ref{2-pttheorem},
 we have 
 $$
 \sigma_D(q)<r\mbox{ if }q\in \mbox{Preimage}(Y_{i_{j+1}},W(Y_{i_j},p)),
 $$
  since $Y_{i_{j+1}}\rightarrow Y_{i_j}$ is toroidal for $\overline D_p$
 above a neighborhood of $W(Y_{i_j},p)$ and $I_p\mathcal O_{Y_{i_{j+1}},q}$ is invertible.
 
 \end{proof}

\begin{Remark} (\ref{eqmt7}) tells us that if $p\in Y_{i_j}\cap W(i_j)$ is a (closed) point, then there is a unique $q\in S(i_j)$ such that $p\in W(Y_{i_j},q)$. This observation is important in the structure of the proof of Theorem \ref{maintheorem}.
\end{Remark}

\begin{Lemma}\label{fin3prepA} Suppose that $Y_{i_0}\rightarrow X$ is an admissible sequence, and $r=\sigma(Y_{i_0})>0$. Then there exists an admissible sequence $Y_{i_j}\rightarrow Y_{i_0}$,  consisting of admissible transformations of types 2,3, 4  and 5, such that all points of 
$\mbox{Sing}_r(Y_{i_j}\setminus W(i_j))$ are  3-prepared 1-points. 
 \end{Lemma} 

\begin{proof} 
We will prove the lemma by constructing an admissible sequence 
$$
Y_{i_{n}}\rightarrow Y_{i_{n-1}}\rightarrow  \cdots \rightarrow Y_{i_1}\rightarrow Y_{i_0}
$$
where each $Y_{i_{j+1}}\rightarrow Y_{i_j}$ is an admissible transformation of type 2 or 3 for $j\le n-3$, $Y_{i_{n-1}}\rightarrow Y_{i_{n-2}}$ is an admissible transformation of type 5 (so that $S(i_j)=S(i_0)$ for $j\le n-1$) and
$Y_{i_{n}}\rightarrow Y_{i_{n-1}}$ is  a transformation of type 4, and for all $j$,
\begin{equation}\label{eqmt3}
\begin{array}{l}
\mbox{If $F$ is a component of $D_{Y_{i_j}}$ such that $F\subset \mbox{Preimage}(Y_{i_j},S(i_0))$}\\
\mbox{then $F\cap \mbox{Sing}_r(Y_{i_j}\setminus W(i_j))=\emptyset$.}
\end{array}
\end{equation}

Let $Y_{i_1}\rightarrow Y_{i_0}$ be a sequence of permissible blow ups of 2-curves  of  $D$  such that if $p\in S(i_0)$, then for $j=1$, we have that
\begin{equation}\label{eqmt2}
W(Y_{i_j},p)\subset C(Y_{i_j},\nu_p^1)\cup C(Y_{i_j},\nu_p^2)\cup \mbox{Preimage}(Y_j,p).
\end{equation}
 We have that (\ref{eqmt3}) holds for $j=1$ (by Theorem \ref{2-pttheorem} and since $\sigma(Y_{i_0})=r$).

Let $Y_{i_2}\rightarrow Y_{i_1}$ be a sequence of blow ups of 2-curves and 3-points of $D$,  such that for $j=2$, we have that
\begin{equation}\label{eqmt41}
\begin{array}{l}
\mbox{If $E_1$ and $E_2$ are distinct components of $Y_{i_j}$ such that $E_1$ contains a curve $C(Y_{i_j},\nu_p^s)$}\\
 \mbox{and $E_2$ contains a curve $C(Y_{i_j},\nu_q^t)$for some $p,q\in S(i_0)$ and $s,t$, then $E_1\cap E_2=\emptyset$}
 \end{array}
\end{equation}
and
\begin{equation}\label{eqmt42}
\begin{array}{l}
\mbox{If $E$ is a component of $D_{Y_{i_j}}$, and $p\in S(i_0)$, $t$ are such that}\\ 
\mbox{$\Lambda(Y_{i_j},\nu_p^t)\in E$ but $C(Y_{i_j},\nu_p^t)\not\subset E$,
then $E$ contracts to $p$.}
\end{array}
\end{equation}

Suppose that $p\in S(i_0)$, and $E$ is a component of $D_{Y_{i_2}}$ which contains $C(Y_{i_2},\nu_p^1)$ for some $t$ ($E$ can contain at most one of these two curves). Let
$\eta_t$ be the generic point of $C(Y_{i_2},\nu_p^t)$.

Since (\ref{eqmt2}) holds for $j=2$,  $W(Y_{i_2},p)$ intersects $E$ in a union of 2-curves and 3-points 
for $\overline D_p$ which contract to $p$, as well as (possibly) the point $\eta_t$.

Let $\gamma_{p,t}$ be the  2-curve for $\overline D_p$ in $E$ which contains
the point $\Lambda(Y_{i_2},\nu_p^t)$ (and is not equal to $C(Y_{i_2},\nu_p^t)$). Let 
\begin{equation}\label{eqmt31}
Z=W(Y_{i_2},p)\cap E\setminus\{\gamma_{p,t},\eta_t\}.
\end{equation}
If $W(Y_{i_2},p)\cap E\subset Z\cup \{\eta_t\}$, then let $Y_{i_3}=Y_{i_2}$.
Otherwise,  the 2-curve $\gamma_{p,t}$ for $\overline D_p$ is in $W(Y_{i_2},p)\cap E$.  In this case we let $Y_{i_3}\rightarrow Y_{i_2}$ be a sequence of
blow ups of 2-curves for $\overline D_p$, which are sections over  $\gamma_{p,t}$, and lie in the strict transform of $E$.
Under each such blow up, the strict transform of $E$ maps isomorphically to $E$, so we may in fact identify $E$ with its strict transform and $\gamma_{p,t}$ with its section.
After enough such blow ups,  on the strict transform $E_3$ of $E$ (which is isomorphic to $E$), we have that $\gamma_{p,t}$ is not 
contained in $W(Y_{i_3},p)\cap E_3$.

 Let $G=W(Y_{i_3},p)\cap E_3\setminus\left(C(Y_{i_3},\nu_p^1)\cup C(Y_{i_3},\nu_p^2) \right)$. $G$ is a closed subset of $E_3$ which is disjoint from $C(Y_{i_3},\nu_p^1)\cup C(Y_{i_3},\nu_p^2)$. Thus there exists an open neighborhood $V$ of $G$ in $\mbox{Preimage}(Y_{i_3},U_p)$
which is disjoint from $C(Y_{i_3},\nu_p^1)\cup C(Y_{i_3},\nu_p^2)$. There exists a sequence of blow ups of 3-points and 2-curves for $\overline D_p$ (which contract to $p$)
$V_1\rightarrow V$  such that $I_p| \mathcal O_{V_1}$ is locally principal.  $V_1\rightarrow V$ extends to an admissible sequence of transformations of type 3, $Y_{i_4}\rightarrow Y_{i_3}$,
such that the strict transform $E_4$ of $E$ on $Y_{i_4}$ satisfies
$$
W(Y_{i_4},p)\cap E_4\subset C(Y_{i_4},\nu_p^1)\cup C(Y_{i_4},\nu_p^2).
$$

Repeat this last step (the construction of $Y_{i_4}\rightarrow Y_{i_2}$) for all $p\in S(i_0)$ and components $E$ of $D_{Y_{i_4}}$ which contain $C(Y_{i_{4}},\nu_p^t)$ for some $p\in S(i_0)$ and $t$, remembering that (\ref{eqmt41}) holds, 
to obtain $Y_{i_5}\rightarrow Y_{i_4}$ where (\ref{eqmt3}), (\ref{eqmt2}), (\ref{eqmt41}) and (\ref{eqmt42}) continue to hold for  $j=5$, and we also have that for $j=5$,
\begin{equation}\label{eqmt43}
\begin{array}{l}
\mbox{If $p\in S(i_0)$ and $E$ is a component of $D_{Y_{i_j}}$ such that $C(Y_{i_j},\nu_p^t)\subset E$}\\
\mbox{for some $t$, then $W(Y_{i_j},p)\cap E\subset C(Y_{i_j},\nu_p^1)\cup C(Y_{i_j},\nu_p^2)$.}
\end{array}
\end{equation}

Suppose that  $E$ is a component of $D_{Y_{i_5}}$ such that $E\cap \mbox{Sing}_r(Y_{i_5}\setminus W(i_5))\ne \emptyset$,
 and $p\in S(i_0)$. If $C(Y_{i_5},\nu_p^t)\subset E$ for some $t$, then $E\cap W(Y_{i_5},p)\subset \{\eta_t\}$, where $\eta_t$ is the generic point of $C(Y_{i_5},\nu_p^t)$
(by (\ref{eqmt43})). If $C(Y_{i_5},\nu_p^t)\not\subset E$ for $t=1,2$, then $\Lambda(Y_{i_5},\nu_p^1),\Lambda(Y_{i_5},\nu_p^2)\not\in E$ by (\ref{eqmt42})
and (\ref{eqmt3}),
and thus by (\ref{eqmt43}), we have that $E\cap W(i_5,p)\cap \left(C(Y_{i_5},\nu_p^1)\cup C(Y_{i_5},\nu_p^2)\right)=\emptyset$.

Thus we can construct an allowable sequence of transformations of type 3, $Y_{i_6}\rightarrow Y_{i_5}$, so that if $E_6$ is the strict transform on $Y_{i_6}$ of a component $E$ of $D_{Y_{i_5}}$ such that $E\cap \mbox{Sing}_r(Y_{i_6}\setminus W(i_6))\ne\emptyset$, then 
\begin{equation}\label{eqmt44}
W(Y_{i_6})\cap E\subset \{\eta_{p,t}\mid \eta_{p_t}\mbox{ is the generic point of a curve $C(Y_{i_6},\nu_p^t)$ which lies on $E$\}.}
\end{equation}
By (\ref{eqmt3}), we have that all exceptional components $F$ of $Y_{i_6}\rightarrow Y_{i_5}$  satisfy $F\cap \mbox{Sing}_r(Y_{i_6}\setminus W(i_6))=\emptyset$. Thus all components $E$ of $Y_{i_6}$ which satisfy $E\cap\mbox{Sing}_r(Y_{i_6}\setminus W(i_6))\ne \emptyset$ must satisfy (\ref{eqmt44}). Thus for $j=6$, we have that 
\begin{equation}\label{eqmt45}
\begin{array}{l}
\mbox{If $E$ is a component of $D_{Y_{i_j}}$ such that $E\cap\mbox{Sing}_r(Y_{i_j}\setminus W(i_j))\ne\emptyset$, then}\\ 
W(Y_{i_j})\cap E\subset \{\eta_{p,t}\mid \eta_{p_t}\mbox{ is the generic point of a curve $C(Y_{i_j},\nu_p^t)$ which lies on $E$\}.}
\end{array}
\end{equation}

By Lemmas \ref{Torgood} and  \ref{2-prep1}, there exists a further sequence $Y_{i_7}\rightarrow Y_{i_6}$ of  blow ups of 3-points and 2-curves of $D$,   such that 
$Y_{i_7}\setminus W(i_7)$ is 3-prepared, except possibly at a finite number of 1-points. The conditions of equations (\ref{eqmt3}), (\ref{eqmt2}),  (\ref{eqmt41}), (\ref{eqmt42})  and (\ref{eqmt45}) continue to hold on $Y_{i_7}$ (although we may have that  some 2-curves for $D$ are blown up which do not contract to points of $S(i_0)$).

We now apply Proposition \ref{local3prep} to the union $H$ of irreducible components $E$ of $D$ for $Y_{i_7}$ which contain a point of
$\mbox{Sing}_r(Y_{i_7}\setminus W(i_7))$, with
$$
A=\{q\in H\mid Y_{i_7}\mbox{ is not 3-prepared at $q$ (which are necessarily 1-points of $D$)}\}
$$
being sure that none of the finitely many 2-points for $D$
$$
B=\{\Lambda(Y_{i_7},\nu_p^t)\mid p\in S(i_0)\}
$$
are in the image of the general curves blown up, to construct an admissible transformation $Y_{i_8}\rightarrow Y_{i_7}$ of type 5, so that if $E$ is an irreducible component of $D$ for $Y_{i_8}$ which contains a point of $\mbox{Sing}_r(Y_{i_8}\setminus W(i_8))$, then all points of 
$E\setminus W(i_8)$ are 3-prepared. We also will have that the conditions of (\ref{eqmt3}), (\ref{eqmt2}),  (\ref{eqmt41}), (\ref{eqmt45})  and (\ref{eqmt45})  hold on points of $E$.

We now perform a sequence of admissible transformations of type 4, introducing local resolvers at all 2-points $p\in Y_{i_8}\setminus W(i_8)$ such that $\sigma_D(p)=r$ (the finite set of these points are all necessarily 3-prepared).

\end{proof}

\begin{Lemma}\label{fin3prepB} Suppose that $Y_{i_0}\rightarrow X$ is an admissible sequence, and $C$ is a curve contained in $D_{Y_i}$ such that $C$ is not a 2-curve and $C\not\subset W(i_0)$.
Let $\eta$ be the generic point of $C$.
Then there exists an admissible sequence $Y_{i_j}\rightarrow Y_{i_0}$, consisting of admissible transformations of types 2, 3, 4 and 5, such that if $C_j$ is the strict transform of $C$ in $Y_{i_j}$,
then 
\begin{enumerate}
\item[1.] If $\sigma_D(\eta)>0$, then all points of $C_j\setminus W(i_j)$ are 3-prepared 1-points.
\item[2.] If $\sigma_D(\eta)=0$, then all points $q$ of $C_j\setminus W(i_j)$ are  1-points or 2-points with $\sigma_D(q)=0$.
\end{enumerate}
\end{Lemma} 

\begin{proof} The proof follows from the arguments of the proof of Lemma \ref{fin3prepA}, applied only to the component $E$ of $D$ containing $C$.
In the case where $\sigma_D(\eta)=0$, the set $A$ of the hypotheses of Proposition \ref{local3prep} used in the construction, will be the union of
the set of 1-points of the strict transform of $E$ which are not 3-prepared, and the 1-points $q$ on the strict transform of $C$ such that $\sigma_D(q)>0$.
\end{proof}

\begin{Lemma}\label{A} Suppose that $Y_{i_0}\rightarrow X$ is an admissible sequence, and $C$ is a curve in $D_{Y_{i_0}}$ which contains a 1-point. Suppose
that $p\in C\cap W(i_0)$ is a 2-point for $D$. Then there exists an admissible sequence $Y_{i_j}\rightarrow Y_{i_0}$, consisting entirely of transformations of types 2 and 3, such that if $C_j$ is the strict transform of $C$ in $Y_{i_j}$, then the following holds.
Suppose that $q\in \mbox{Preimage}(Y_{i_j},p)\cap C_j$. Then $q$ is a 2-point for $D$,  and we further have that if $q\in W(i_j)$, then $q=\Lambda(Y_{i_j},\nu_b^t)$
and $C_j=C(Y_{i_j},\nu_b^t)$ for some $b\in S(i_j)$.
\end{Lemma}

\begin{proof} We have  that $b\in C\cap W(Y_{i_0},b)$ for some  $b\in S(i_0)$. If $C=C(Y_{i_0},\nu_b^t)$ for some $t$, then
we have obtained the conclusions of the lemma, so suppose that $C\ne C(Y_{i_0},\nu_b^t)$ for any $t$.
Since $C$ is not a 2-curve for $D$, there
exists a sequence of blow ups of 3-points for $\overline D_b$, $Y_{i_1}\rightarrow Y_{i_0}$, such that the strict transform $C_1$ of $C$ on $Y_{i_1}$
has the property that the set $C_1\cap\mbox{Preimage}(Y_{i_1},p)$ consists of 2-points for $D$.  We further may obtain that either $C_1\cap \mbox{Preimage}(Y_{i_1},p)$ is disjoint from $W(Y_{i_1},b)$, in which case we have achieved the conclusions of the lemma, or that 
$$
C_1\cap \mbox{Preimage}(Y_{i_1},p)\mbox{  has non trivial intersection with }W(Y_{i_1},b),
$$
 but
$\Lambda(Y_{i_1},\nu_b^t)\not\in C_1$ for any $t$.
Assume that this last case holds, and $q\in C_1\cap \mbox{Preimage}(Y_{i_1},p)$. Then there is a unique 2-curve $\gamma$ of $\overline D_b$, which is also a 2-curve for $D$, 
such that $q\in \gamma$. There is a finite sequence of blow ups $Y_{i_2}\rightarrow Y_{i_1}$ of 2-curves for $\overline D_b$, which are sections over $\gamma$, such that if $C_2$ is the strict transform of $C_1$ in $Y_{i_2}$, and $a\in C_2\cap\mbox{Preimage}(Y_{i_2},q)$, then $I_b\mathcal O_{Y_{i_2},a}$ is principal,
so that $C_2\cap \mbox{Preimage}(Y_{i_2},q)$ is disjoint from $W(i_2)$.

We now apply this procedure above any other points of $C_1\cap \mbox{Preimage}(Y_{i_1},p)$, to construct a further sequence of blow ups of 2-curves $Y_{i_3}\rightarrow Y_{i_2}$ such that the strict transform $C_3$ of $C_2$ on $Y_{i_3}$ satisfies the condition that $C_3\cap \mbox{Preimage}(Y_{i_3},p)$ is disjoint from $W(i_3)$.
\end{proof}

 \begin{Lemma}\label{blowupproc} Suppose that $Y_{i_0}\rightarrow X$ is an admissible sequence and $C$ is a curve in $D_{Y_{i_0}}$ which contains
 a 1-point. Suppose that $p\in C$ is a 2-point. Then there exists an admissible sequence $Y_{i_j}\rightarrow Y_{i_0}$, consisting entirely of 
 transformations of types 2, 3 and 4, satisfying the following properties. Let $C_j$ be the strict transform of $C$ in $Y_{i_j}$.  Suppose that $q\in\mbox{Preimage}(Y_{i_j},p)\cap C_j$. Then $q$ is a 2-point for $D$, and one of the following holds:
 \begin{enumerate}
 \item[1.] There exists $a\in S(i_j)$ such that $q=\Lambda(Y_{i_j},\nu_a^t)$ and $C_j=C(Y_{i_j},\nu_a^t)$ for some $t$, or
 \item[2.] $\sigma_D(q)=0$ and $q\not\in W(i_j)$.
 \end{enumerate}
 \end{Lemma}

\begin{proof} First suppose that $p\in W(i_0)$. Then there exists a  point $b\in S(i_0)$ such that $p\in W(Y_{i_0},b)$. Perform Lemma \ref{A} to construct an allowable sequence $Y_{i_1}\rightarrow Y_{i_0}$ such that if  $C_1$ is the strict transform of $C$ on $Y_{i_1}$, and $q\in C_1\cap \mbox{Preimage}(Y_{i_0},p)$ is contained in $W(i_1)$, then 
there exists 
$a\in S(i_1)$ such that $q=\Lambda(Y_{i_1},\nu_a^t)$ and $C_1=C(Y_{i_1},\nu_a^t)$ for some $a$.
 Let
$$
\lambda(i_1):=\max\{\sigma_D(q)\mid q\in \left(C_1\cap \mbox{Preimage}(Y_{i_1},p)\right)\setminus W(i_1)\}.
$$
We have that 
$$
\lambda(i_1)<\sigma_D(p).
$$

If $p\not\in W(i_0)$, then we let $Y_{i_1}=Y_{i_0}$, $S(i_1)=S(i_0)$ and $\lambda(i_1)=\sigma_D(p)$.

The rest of the proof is the same for both cases considered above ($p\in W(i_0)$ and $p\not\in W(i_0)$).

Now perform Lemma \ref{2-prep1} to construct a sequence of blow ups of 2-curves for $D$, $Y_{i_2}\rightarrow Y_{i_1}$,
such that if  $C_2$ is the strict transform of $C_1$ on $Y_{i_2}$, then all points of $\left(\mbox{Preimage}(Y_{i_2},p)\cap C_2\right)\setminus W(i_2)$ (which are necessarily 2-points for $D$) are 3-prepared. Let 
$$
R(i_2)=\{q\in \left(\mbox{Preimage}(Y_{i_2},p)\cap C_2\right) \setminus W(i_2)\mid q \mbox{ is a 2-point and }\sigma_D(q)>0\}.
$$
Write $R(i_2)=\{q_1,\ldots,q_m\}$. For each $q_i\in R(i_2)$, let $(U_{q_i},\overline D_{q_i},I_{q_i},\nu_{q_i}^1,\nu_{q_i}^2)$ be a local resolver
in $Y_{i_2}$.
Let $Y_{i_3}\rightarrow Y_{i_2}$ be the admissible sequence consisting of transformations of type 4, where $S(i_3)=S(i_2)\cup R(i_2)$.
Let $C_3=C_2$, the strict transform of $C$ on $Y_{i_3}$. If $q\in \left(\mbox{Preimage}(Y_{i_3},p)\cap C_3\right) \setminus W(i_3)$. then
$\sigma_D(q)=0$.
If $q\in \left(\mbox{Preimage}(Y_{i_3},p)\cap C_3\right)$ and $q\in R(i_2)=S(i_3)\setminus S(i_2)$, then we have 
$$
\sigma_D(q)\le \lambda(i_1).
$$
Now again perform Lemma \ref{A}, to construct $Y_{i_4}\rightarrow Y_{i_3}$ such that if  $C_4$ be the strict transform of $C_3$ on $Y_{i_4}$, and  $q\in \left(\mbox{Preimage}(Y_{i_4},p)\cap C_4\right)\cap W(i_4)$, then $q=\Lambda(Y_{i_4},a)$ and $C_4=C(Y_{i_4},\nu_a^t)$ for some $a\in S(i_4)$ and $t$.
 If $\left(\mbox{Preimage}(Y_{i_4},p)\cap C_4\right)\cap W(i_4)\ne\emptyset$, we have that
$$
\lambda(i_3):=\max\{\sigma_D(q)|q\in \left(\mbox{Preimage}(Y_{i_4},p)\cap C_4\right)\cap W(i_4)\}< \lambda(i_1).
$$
Iterate the above, performing  Lemma \ref{2-prep1} followed by a sequence of adimissible transformations of type 4, and then performing Lemma \ref{A}, to eventually obtain $Y_{i_j}\rightarrow Y_{i_0}$ such that if $C_{i_j}$ is the strict transform of $C$ on $Y_{i_j}$, then 
$\sigma_D(q)=0$ if $q\in  \left(\mbox{Preimage}(Y_{i_j},p)\cap C_{i_j}\right) \setminus W(i_j)$, 
and if $q\in  \left(\mbox{Preimage}(Y_{i_j},p)\cap C_{i_j}\right)\cap W(i_j)$, then  $q=\Lambda(Y_{i_j},b)$ and $C_{i_j}=C(Y_{i_j},\nu_b^t)$ for some
$b\in S(i_j)$ and $t$.

\end{proof} 

We now define an admissible  transformation of type 6.  Suppose that $Y_{i_0}\rightarrow X$ is an admissible sequence, and $C$ is a good curve on
$Y_{i_0}$ (Definition \ref{goodcurve}). 

First assume that $\sigma_D(\eta)=0$, where $\eta$ is the generic point of $C$.  By Lemmas \ref{LemmaA} - \ref{LemmaD}, there exists a sequence of transformations of type 1
$Y_{i_1}\rightarrow Y_{i_0}$ such that the strict transform $C_{1}$ of $C$ in $Y_{i_1}$ is such  that $\sigma_D(q)=0$
and the other assumptions of Lemma \ref{LemmaE} hold for all
$q\in C_{1}\setminus W(i_1)$. Let $Y_{i_2}\rightarrow Y_{i_1}$ be the blow up of $C$ which is an admissible blow up. We have that $\sigma_D(q)=0$ for all 
$q\in \mbox{Preimage}(Y_{i_2}, C_{1}\setminus W(i_1))$ by Lemma \ref{LemmaE}. We define the morphism $Y_{i_2}\rightarrow Y_{i_0}$ to be the
transformation  of type 6 associated to $C$.

Now assume that $\sigma_D(\eta)>0$, where $\eta$ is the generic point of $C$. 
Let $Z\rightarrow Y_{i_0}\setminus \left(W(i_0)\cup D_{Y_{i_0}}\right)$
be the canonical sequence of blow ups above $C\setminus W(i_0)$ defined in 1) of Definition \ref{canonical}.
$Z\rightarrow Y_{i_0}\setminus \left(W(i_0)\cup D_{Y_{i_0}}\right)$ has
a factorization 
$$
Z=Z_m\rightarrow Z_{m-1}\rightarrow \cdots \rightarrow Z_1\rightarrow Z_0=Y_{i_0}\setminus \left(W(i_0)\cup D_{Y_{i_0}}\right)
$$
where each $Z_{j+1}\rightarrow Z_j$ is the blow up of a curve $A_j$ which is a section over $C\setminus W(i_0)$, and is permissible for $D$
(thus $A_j$ is either a 2-curve, or consists entirely of 1-points).
We will inductively extend these morphisms (to an admissible sequence
$$
X_m\rightarrow V_{m-1}\rightarrow X_{m-1}\rightarrow \cdots X_3\rightarrow V_2\rightarrow X_2\rightarrow V_1\rightarrow X_1\rightarrow Y_{i_0},
$$
so that 
$$
\mbox{Preimage}(V_{j}, Y_{i_0}\setminus \left(W(i_0)\cup D_{Y_{i_0}}\right))=\mbox{Preimage}(X_j,Y_{i_0}\setminus \left(W(i_0)\cup D_{Y_{i_0}}\right))=Z_j
$$
for all $j$.

We define $X_1$ to be the blow up of $C$ (which is an admissible blow up). 
If $A_j$ is a 2-curve for $D$, then $V_j\rightarrow X_j$ will just be the identity map (with $S(V_j)=S(X_j)$).

if $A_j$ is not a 2-curve, then let $\gamma_0$ be the Zariski closure of $A_j$ in $X_j$. $\gamma_0\setminus Z_j$ is a set of 2-points and 3-points for $D$.
First define a sequence $T_1\rightarrow X_j$ of blow ups of 3-points for $D$, so that the Zariski closure $\gamma_1$ of $A_j$ in $T_1$ is such that
$\gamma_1\setminus A_j$ consists only
of 2-points. Now successively apply Lemma \ref{blowupproc} to the points of  $\gamma_1\setminus A_j$ to construct an admissible sequence
$T_2\rightarrow T_1$ consisting of transformations of types 2, 3 and 4, so that if $\gamma_2$ is the Zariski closure of $A_j$ in $T_2$, 
and $q\in \gamma_2\setminus A_j$, then either $\sigma_D(q)=0$ and $q\not\in W(T_2)$, or there exists $a\in S(T_2)$ such that $q=\Lambda(T_2,\nu_a^t)$ and $C_j=C(T_2,\nu_a^t)$.

A point in $\gamma_1\setminus A_j$ cannot be  contained in a 2-curve which is a section over $C$, since 
$\gamma_1\setminus A_j$ contains no 3-points, and the points of $C\cap W(i_0)$ are all 2-points  for $D$.
Thus $T_2\rightarrow T_1$ has the property that 
$\mbox{Preimage}(T_2,  Y_{i_0}\setminus \left(W(i_0)\cup D_{Y_{i_0}}\right))=Z_j$.

Let $\eta_j$ be the generic point of $A_j$. Then $\sigma_D(\eta_j)>0$ (by Theorem \ref{1-pointspec}). Thus all points of $q\in \gamma_2$ satisfy $\sigma_D(q)\ge \sigma_D(\eta_j)>0$. We then define $V_j$ to be $T_2$.
 
We now define $X_{j+1}\rightarrow V_j$ to be the  blow up of $\gamma_2$, which is an admissible blow up.

\begin{Lemma}\label{Lemmamt14} Suppose that (\ref{eqmt1}) is an admissible sequence consisting of admissible transformations of types 1 - 7. Then for
any transformation $Y_{i_{j+1}}\rightarrow Y_{i_j}$ in (\ref{eqmt1}), the conditions (\ref{eqmt7}) - (\ref{eqmt11}) hold. 
\end{Lemma}

The proof of Lemma \ref{Lemmamt14} follows from our construction of an admissible transformation of type 6, and Theorem \ref{1-pointspec}, Lemma \ref{LemmaE} and Lemma \ref{mt16}.

\begin{Proposition}\label{Propmt1} Suppose that $Y_{i_0}\rightarrow X$ is an admissible sequence. Let $r=\sigma(Y_i)>0$. Then there exists an admissible sequence $Y_{i_j}\rightarrow Y_{i_0}$ such that $\sigma(Y_{i_j})\le r$ and
$\sigma_D(p)<r$ for all $p\in Y_{i_j}\setminus W(i_j)$.
\end{Proposition}

\begin{proof}
First perform Lemma \ref{fin3prepA}, to obtain an admissible sequence $Y_{i_1}\rightarrow Y_{i_0}$ such that $\Gamma(Y_{i_1})=\mbox{Sing}_r(Y_{i_1}\setminus W(i_1))$ 
consists of  3-prepared 1-points. By Lemma \ref{SingLoc}, $\Gamma(Y_{i_1})$ is a disjoint union of nonsingular curves.

 Suppose that $C$ is the closure in $Y_{i_1}$ of a curve in $\Gamma(Y_{i_1})$. By Lemma \ref{blowupproc}, there exists an admissible sequence
 $Y_{i_2}\rightarrow Y_{i_2}$ consisting of transformations of types 2, 3 and 4 such that the strict transform $C_2$ of $C$ in $Y_{i_2}$ is a good curve.
  We may thus perform an admissible transformation
 of type 6, $Y_{i_3}\rightarrow Y_{i_2}$ to get that all points $q$ of $\mbox{Preimage}(Y_{i_3},C_2\setminus W(i_2))$ are 2-prepared for $D$ with $\sigma_D(q)\le r-1$ (by Theorem \ref{1-pointspec}). Further, $\sigma_D(q)\le r-1$ for $q\in \mbox{Preimage}(Y_{i_3},W(i_1))\setminus W(i_3)$. 
 We now apply Lemma \ref{blowupproc} followed by an admissible transformation of type 6 for the other curves of $\Gamma(Y_{i_1})$, to obtain the conclusions of the Proposition.
 \end{proof}

\begin{Proposition}\label{Propmt2} Suppose that $Y_{i_0}\rightarrow X$ is an admissible sequence, $r=\sigma(Y_{i_0})>0$ and $\sigma_D(p)<r$ if $p\in Y_{i_0}\setminus W(i_0)$. Then there exists an admissible sequence
$Y_{i_j}\rightarrow Y_{i_0}$ such that $\sigma(Y_{i_j})<r$.
\end{Proposition}

\begin{proof}
Let 
$$
T(i_0)=\{p\in S(i_0)\mid \sigma_D(p)=r\}.
$$
Suppose there exists $p\in T(i_0)$ and $t$ such that $I_p\mathcal O_{Y_{i_0},\eta}$ is not principal, where $\eta$ is the generic point of 
$C(Y_{i_0},\nu_p^t)$. First apply  Lemma \ref{fin3prepB} to $C(Y_{i_0},\nu_p^t)$ to construct an admissible sequence $Y_{i_1}\rightarrow Y_{i_0}$
so that all points $q$ of $C(Y_{i_2},\nu_p^t)\setminus W(i_2)$  are 3-prepared 1-points if $\sigma_D(\eta)>0$ and are 1-points or 2-points which satisfy $\sigma_D(q)=0$ if $\sigma_D(\eta)=0$.
Then successively apply Lemma \ref{blowupproc} to all 2-points  $q$ of $C(Y_{i_2},\nu_p^t)$ which have $\sigma_D(q)>0$, to construct an admissible sequence $Y_{i_2}\rightarrow Y_{i_1}$
such that $C(Y_{i_2},\nu_p^t)$ (which is the strict transform of $C(Y_{i_0},\nu_p^t)$) is a good curve. 
Let $Y_{i_3}\rightarrow Y_{i_2}$ be a transformation of type 6 applied to  $C(Y_{i_2},\nu_p^t)$. 
We continue to have $\sigma(Y_{i_3})<r$ and if $p\in S(i_3)\setminus S(i_0)$, then $\sigma_D(p)<r$ (by Lemma \ref{Lemmamt14}). Thus 
$$
T(i_2)=\{p\in S(i_2)\mid \sigma_D(p)=r\}=T(i_0).
$$
We may thus repeat the above construction for some $q\in T(i_2)$ and $t$ such that $I_q\mathcal O_{Y_{i_3},\zeta}$ is not principal, where $\zeta$ is the generic point of 
$C(Y_{i_3},\nu_q^t)$. After iterating this procedure a finite number of times, we will construct an admissible sequence $Y_{i_4}\rightarrow Y_{i_0}$
such that $\sigma(Y{i_4})\le r$, $\sigma(Y_{i_4}\setminus W(i_4))<r$, 
$$
T(i_4)=\{p\in S(i_4)\mid \sigma_D(p)=r\}=T(i_0),
$$
and for all $p\in T(i_4)$, and $t$, $I_p\mathcal O_{Y_{i_4},\eta}$ is  principal, where $\eta$ is the generic point of 
$C(Y_{i_4},\nu_p^t)$.

Now perform a sequence of blow ups of 2-curves for $D$ $Y_{i_5}\rightarrow Y_{i_4}$, so that
$W(Y_{i_5},p)\subset \mbox{Preimage}(Y_{i_5},p)$ for all $p\in T(i_5)=T(i_0)$.  
Finally, we may construct an admissible sequence $Y_{i_6}\rightarrow Y_{i_5}$ consisting of transformations of type 3,  so that $W(i_6)=\emptyset$
for all $p\in T(i_6)=T(i_0)$. We may then apply a transformation of type 7, $Y_{i_7}\rightarrow Y_{i_6}$, defined by
$Y_{i_7}=Y_{i_6}$ and $S(i_7)=S(i_6)\setminus T(i_0)$ to obtain that $\sigma(Y_{i_7})\le r-1$.
\end{proof}

\vskip .2truein

Now we prove Theorem \ref{maintheorem}, by starting with $Y_0=X$ and $S(0)=\emptyset$. After applying successively Propositions \ref{Propmt1} and then \ref{Propmt2} enough times, we construct an admissible sequence $Y_n\rightarrow X$ such that 
$\sigma(Y_n)=0$, so that $S(Y_n)=\emptyset$, and $\sigma_D(p)=0$ for $p\in Y_n$.

\section{Proof of Toroidalization}\label{Section6}

\begin{Theorem}\label{TheoremA} Suppose that $\mathfrak k$ is an algebraically closed field of characteristic zero, and $f:X\rightarrow S$ is a dominant morphism from a nonsingular 3-fold over $\mathfrak k$ to a nonsingular 
surface $S$ over $\mathfrak k$ and $D_S$ is a reduced SNC divisor on $S$ such that $D_X=f^{-1}(D_S)_{\mbox{red}}$ is a SNC divisor on $X$ which contains the locus
where $f$ is not smooth. Further suppose that $f$ is 1-prepared. Then there exists a sequence of blow ups of points and nonsingular curves
$\pi_2:X_1\rightarrow X$, which are contained in the preimage of $D_X$, such that the induced morphism $f_1:X_1\rightarrow S$ is  prepared with respect to $D_S$.
\end{Theorem}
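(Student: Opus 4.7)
The plan is to deduce Theorem \ref{TheoremA} directly from the two main preparation reductions of the paper: first promote 1-preparation to 2-preparation via Proposition \ref{Step2}, and then promote 2-preparation to (full) preparation via Theorem \ref{maintheorem}. Composing the two resulting morphisms yields the required sequence of permissible blow-ups.

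First I would apply Proposition \ref{Step2} to the 1-prepared morphism $f:X\to S$. This produces a toroidal morphism $\pi:X'\to X$ with respect to $D_X$, realized as a finite sequence of blow-ups of 2-curves and 3-points, such that $f\circ\pi:X'\to S$ is 2-prepared: $\sigma_D(p)<\infty$ for every $p\in D_{X'}$, and $X'$ is prepared at all 3-points and at the generic point of every 2-curve of $D_{X'}$. Since 3-points lie on $D$ and 2-curves are nonsingular curves contained in $D$ meeting $D$ with SNCs, each such blow-up is permissible in the sense of the definition given after Lemma \ref{1-prep}, and hence its center is contained in the preimage of $D_X$.

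Next I would apply Theorem \ref{maintheorem} to $f\circ\pi:X'\to S$. This yields a sequence of permissible blow-ups $\psi:X_1\to X'$ such that the induced morphism $f_1:X_1\to S$ is prepared with respect to $D_S$. Setting $\pi_2=\pi\circ\psi:X_1\to X$, we obtain a composite sequence of blow-ups of points and nonsingular curves, each permissible at its stage and hence contained in the successive preimages of $D_X$, such that $f_1=f\circ\pi_2$ is prepared. This is precisely the conclusion of Theorem \ref{TheoremA}.

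There is essentially no new difficulty at this final step; the entire technical weight lies upstream. Proposition \ref{Step2} handles the elimination of the ``$\sigma_D=\infty$'' locus by monomializing the ideals $(a_i)$ and $(b_{ijk})$ controlling the $z$-free terms of $F$. The reduction from 2-prepared to 3-prepared is accomplished in Proposition \ref{Step3.1}, which depends on the local form Lemmas \ref{localform}, \ref{LemmaB} and the algebraic patching Lemmas \ref{pointprep}, \ref{algpointprep}, \ref{1-ptprep}. The main work is then in Theorem \ref{maintheorem}, whose proof rests on the local $\sigma_D$-reduction theorems of Section \ref{Section4} (Theorems \ref{1-pointres}, \ref{1-pointspec}, \ref{2-pttheorem}) and on the intricate global bookkeeping that patches the locally constructed toroidal morphisms (associated to the local resolvers $(U_p,\overline D_p,I_p,\nu_p^1,\nu_p^2)$ at the 2-points of maximal $\sigma_D$) into an honest sequence of permissible blow-ups on $X$. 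Were one to identify a ``main obstacle'' for Theorem \ref{TheoremA} itself, it would be that global patching argument, not the formal deduction carried out above.
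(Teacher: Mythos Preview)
Your proposal is correct and follows essentially the same route as the paper: the paper's own proof is the one-line ``immediate from Lemma \ref{1-prep}, Proposition \ref{Step2} and Theorem \ref{maintheorem},'' and your argument spells out exactly the composition Proposition \ref{Step2} (1-prepared $\Rightarrow$ 2-prepared via blow-ups of 2-curves and 3-points) followed by Theorem \ref{maintheorem} (2-prepared $\Rightarrow$ prepared via permissible blow-ups). The paper additionally cites Lemma \ref{1-prep}, but since the hypotheses of Theorem \ref{TheoremA} already include 1-preparation this citation is not doing work at this step; your omission of it is harmless.
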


\begin{proof} The proof is immediate from Lemma \ref{1-prep}, Proposition \ref{Step2} and Theorem \ref{maintheorem}.
\end{proof}

Theorem \ref{TheoremA} is a slight restatement of Theorem 17.3 of \cite{C3}. Theorem 17.3 \cite{C3} easily follows from Lemma \ref{1-prep} and Theorem \ref{TheoremA}.

\begin{Theorem}\label{TheoremB} Suppose that $\mathfrak k$ is an algebraically closed field of characteristic zero, and $f:X\rightarrow S$ is a dominant morphism from a nonsingular 3-fold over $\mathfrak k$ to a nonsingular 
surface $S$ over $\mathfrak k$ and $D_S$ is a reduced SNC divisor on $S$ such that $D_X=f^{-1}(D_S)_{\mbox{red}}$ is a SNC divisor on $X$ which contains the locus where $f$ is not smooth. Then there exists a sequence of blow ups of points and nonsingular curves
$\pi_2:X_1\rightarrow X$, which are contained in the preimage of $D_X$, and a sequence of blow ups of points
$\pi_1:S_1\rightarrow S$ which are in the preimage of $D_S$, such that the induced rational  map $f_1:X_1\rightarrow S_1$ is a morphism which is toroidal with respect to $D_{S_1}=\pi_1^{-1}(D_S)$.
\end{Theorem}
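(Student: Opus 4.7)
The plan is to combine the new strong-preparation result of this paper (Theorem \ref{TheoremA}) with the already-known second step of the toroidalization algorithm from \cite{C3} (generalized in \cite{CK}), and to handle the passage between ``1-prepared'' and the setup needed for Theorem \ref{TheoremA} via Lemma \ref{1-prep}.

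First I would apply Lemma \ref{1-prep} to $f:X\to S$ with $D_T=D_S$. This produces a commutative diagram in which $S$ is replaced by some $S'\to S$ (a sequence of blow ups of points in the preimage of $D_S$, so that $D_{S'}$ remains SNC), and $X$ is replaced by $X'\to X$ (a sequence of blow ups of nonsingular centers in the preimage of $D_X$), such that the induced morphism $f':X'\to S'$ is 1-prepared with respect to the pulled-back SNC divisor $D_{S'}$ on $S'$ and the SNC divisor $D_{X'}=(f')^{-1}(D_{S'})_{\mathrm{red}}$ on $X'$. Note that $D_{X'}$ still contains the non-smooth locus, because blowing up in the preimage of $D_X$ (respectively $D_S$) only enlarges the ramification divisor within the preimage of $D_S$.

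Next I would invoke Theorem \ref{TheoremA}, applied to the 1-prepared morphism $f':X'\to S'$. This produces a further sequence of blow ups of points and nonsingular curves $\pi_2':X''\to X'$, all centers contained in the preimage of $D_{X'}$, such that the composed morphism $f'':X''\to S'$ is prepared (in the sense of Definition \ref{Prep}, i.e., $\sigma_D\equiv 0$). Since preparation is slightly stronger than strong preparation (as noted in the introduction, using that $\overline{\mathcal I}_p=\mathcal O_{X'',p}$ for all $p$ forces the fitting ideal condition defining strong preparation to hold), the morphism $f''$ is in particular strongly prepared with respect to $D_{S'}$ and $D_{X''}$.

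Finally, I would apply the second step of toroidalization: a strongly prepared morphism from a 3-fold (in fact from an $n$-fold, by \cite{CK}) to a surface can be toroidalized. Concretely, by Sections 18 and 19 of \cite{C3} (or \cite{CK}), there exist a further sequence of blow ups of points in the preimage of $D_{S'}$, yielding $\pi_1:S_1\to S'\to S$, and a further sequence of blow ups of points and nonsingular curves in the preimage of $D_{X''}$, yielding $X_1\to X''\to X'\to X$, such that the induced rational map $f_1:X_1\to S_1$ is a morphism which is toroidal with respect to $D_{S_1}=\pi_1^{-1}(D_S)_{\mathrm{red}}$ and $D_{X_1}=f_1^{-1}(D_{S_1})_{\mathrm{red}}$. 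Composing the three stages gives the required diagram. The hard part of the entire toroidalization program is the strong-preparation step, which is exactly the content of Theorem \ref{TheoremA}; the present theorem is essentially a bookkeeping assembly of the three black boxes (Lemma \ref{1-prep}, Theorem \ref{TheoremA}, and the second step from \cite{C3}/\cite{CK}), with the only minor subtlety being the verification that preparation implies strong preparation so that the second step applies.
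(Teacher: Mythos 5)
Your proposal is correct and is essentially the paper's own argument: the paper deduces Theorem \ref{TheoremB} directly from Theorem \ref{TheoremA} (with Lemma \ref{1-prep} supplying the reduction to the 1-prepared case) together with the second-step toroidalization results of \cite{C3} (Theorems 18.19, 19.9 and 19.10), exactly the three black boxes you assemble. Your extra remark that prepared implies strongly prepared is the same observation the paper makes in the introduction, so nothing further is needed.
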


\begin{proof} The proof follows immediately from Theorem \ref{TheoremA}, and Theorems 18.19, 19.9 and 19.10 of \cite{C3}.
\end{proof}

Theorem \ref{TheoremB} is a slight restatement of Theorem 19.11 of \cite{C1}. Theorem 19.11 \cite{C3} easily follows from Theorem \ref{TheoremB}.

\end{document}